\newtheorem{thm}{Theorem}[section]
\newtheorem{cor}[thm]{Corollary}
\newtheorem{lem}[thm]{Lemma}
\newtheorem{prop}[thm]{Proposition}
\theoremstyle{definition}
\newtheorem{defn}[thm]{Definition}
\theoremstyle{remark}
\newtheorem{rem}[thm]{Remark}
\numberwithin{equation}{section}
\newcommand{\e}{{\rm e}}
\newcommand{\N}{\mathbb{N}}
\newcommand{\R}{\mathbb{R}}
\newcommand{\bigO}{\ensuremath{\mathcal{O}}}
\newcommand{\Exp}[2][]{\mathbb{E}_{#1}\left[ #2 \right]}
\newcommand{\Expcond}[2]{\mathbb{E}\left[ \left. #1 ~\right|~ #2 \right]}
\newcommand{\Prob}[2][]{\mathbb{P}_{#1}\left( #2 \right)}
\newcommand{\Probcond}[2]{\mathbb{P}\left( \left. #1 ~\right|~ #2 \right)}
\newcommand{\ind}[1]{\mathbf{1}_{\left\{ #1 \right\} } }
\newcommand{\F}{\mathcal{F}} 
\newcommand{\cvL}{ \quad \text{in } L^2(\mathbb{P}) } 
\newcommand{\lime}{\lim_{\epsilon \to 0}}
\newcommand{\LP}{ L^2(\mathbb P) }
\newcommand{\D}{\mathbb{D}}
\newcommand{\B}{\mathcal{B}}
\newcommand{\Sd}{S^{\downarrow}}
\newcommand{\s}{\mathbf{s}}
\newcommand{\X}{\mathbf{X}}
\newcommand{\nubar}{g} 
\newcommand{\Nld}{N} 
\newcommand{\Ald}{A} 
\newcommand{\markd}{(x,\s)\in [0,1]\times \Sd} 
\begin{document}
\title{On the number of large triangles in the Brownian triangulation and fragmentation processes}
\author{Quan Shi\thanks{Institut f\"ur Mathematik, Universit\"at Z\"urich,
Winterthurerstrasse 190, CH-8057 Z\"urich, Switzerland \protect\\ email: quan.shi@math.uzh.ch} 
\thanks{The author is deeply indebted to Jean Bertoin for suggesting this research and for many fruitful discussions. 
 The author thanks Igor Kortchemski for reading this paper carefully and for many useful comments. The author also thanks two anonymous referees whose suggestions and remarks helped to improve this paper.
This work is supported by the Swiss National Science Foundation 200021\_144325/1.}
}
\date{\today}
\maketitle
\begin{abstract}  The Brownian triangulation is a random compact subset of the unit disk introduced by Aldous. For $\epsilon>0$, let $N(\epsilon)$ be the number of triangles whose sizes (measured in different ways) are greater than $\epsilon$ in the Brownian triangulation. We determine the asymptotic behavior of $N(\epsilon)$ as $\epsilon \to 0$.

To obtain this result, a novel concept of ``large'' dislocations in fragmentations has been proposed. 
We develop an approach to study the number of large dislocations which is widely applicable to general self-similar fragmentation processes. This technique enables us to study $N(\epsilon)$ because of a bijection between the triangles in the Brownian triangulation and the dislocations of a certain self-similar fragmentation process.

Our method also provides a new way to obtain the law of the length of the longest chord in the Brownian triangulation. We further extend our results to the more general class of geodesic stable laminations introduced by Kortchemski.

\end{abstract}
{\bf 2010 Mathematics Subject Classiﬁcation:} 60F25, 60G18.\\
{\bf Keywords:} Brownian triangulation; Self-similar fragmentation; Geodesic stable lamination.

\newpage
\section{Introduction}\label{section:intro}
For $n\in \N$, let $P_n$ be the polygon formed by the $n$ roots of unity. A {\it triangulation of $P_n$} is the union of its sides and $(n-3)$ non-crossing (except at the endpoints) diagonals, thus dividing $P_n$ into $(n-2)$ triangles. A {\it uniform triangulation $\mathcal{T}_n$ of $P_n$} is a triangulation chosen uniformly at random from the set of all the different triangulations of $P_n$. In \cite{aldous1994triangulating}, Aldous regarded $\mathcal{T}_n$ as a random compact subset of the closed unit disk $\D\subset \R^2$ and showed that, as $n$ tends to infinity, $\mathcal{T}_n$ converges to a limit random compact set $\mathcal B$ in distribution for the Hausdorff metric.  Figure \ref{fig_BT} shows a sample of $\mathcal B$. 
\begin{figure}[h]\label{fig_BT}
\centering
\includegraphics[width=0.35\textwidth]{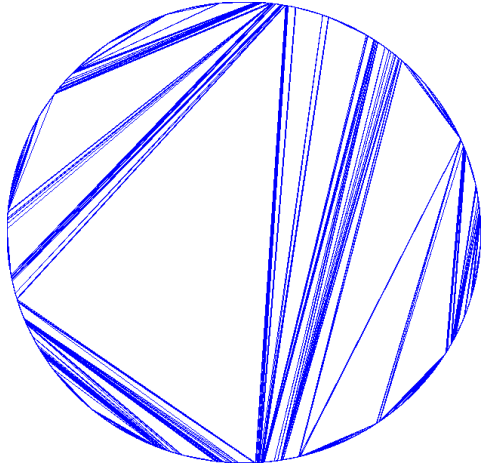}
\caption{ {\bf A sample of the Brownian triangulation.}}
\end{figure}

It turns out that $\B$ is a random {\it triangulation of the disk}, in the sense that it is a random closed subset of $\D$, whose complement $\D \backslash \mathcal B$ is a union of open triangles with vertices on the unit circle $\partial \D$. Aldous called $\B$ the {\it Brownian triangulation} since it can be encoded by a normalized Brownian excursion $e=(e_s, s\in [0,1])$ as follows. Parameterize $\partial \D$ by $({\rm e}^{i2\pi s}, s\in [0,1))$, and write $[{\rm e}^{i2\pi s}, {\rm e}^{i2\pi t}]$ for the chord connecting ${\rm e}^{i2\pi s}, {\rm e}^{i2\pi t} \in \partial \D$. Then almost surely 
\begin{equation}\mathcal B= \bigcup_{s \overset{e}{\sim} t, s,t\in[0,1)}[{\rm e}^{i2\pi s}, {\rm e}^{i2\pi t}], \end{equation}
 where $s \overset{e}{\sim} t$ if and only if $e(s)=e(t)=\min_{r\in [s\wedge t, s\vee t]}e(r)$. See \cite{ aldous1994triangulating} for details. 

The Brownian triangulation draws our attention because of its importance in many aspects. The Brownian triangulation is universal, as it is the limit of various random non-crossing configurations (collections of non-crossing diagonals) of $P_n$ \cite{curien2012random}. The Brownian triangulation is also closely related to the Brownian Continuum random tree (CRT) \cite{aldous1994recursive, aldous1994triangulating} and the Brownian map \cite{legall2008scaling}. Finally, the Brownian triangulation has provoked the study of other random triangulations, such as random recursive triangulations \cite{curienlegall2011random} and the Markovian hyperbolic triangulation \cite{curien2013hyperbolic}. 

By definition, a {\it triangle}, or {\it face}, of $\B$ is a connected component of $\D \backslash \B$. 
In the present work, we are mainly interested in the number of ``large'' triangles in $\B$. Clearly there are various ways of measuring the size of a triangle. Here we are concerned with two different ways, specifically the length of the shortest edge and the area. 

Let us now present a special case of our results. Recall that $e$ is the normalized Brownian excursion that encodes $\mathcal B$. We define a family of random open sets
\begin{equation}\label{eq:Theta}
 \Theta_e(t) := \left\{s\in (0,1): e(s)>t \right\}, \quad t\geq 0.
\end{equation}
For every $t\geq 0$, write $\Theta_e(t) = \bigcup_{i\in \N}I_i(t)$, where $( I_i(t), i\in \N)$ are the connected components of $\Theta_e(t)$. Hence $( I_i(t), i\in \N)$ are disjoint open intervals, possibly empty. We denote the length of an interval $I$ by $|I|$. 

\begin{thm}\label{thm:Brownian}
\begin{enumerate}
\item For every $\epsilon>0$, let $N'(\epsilon)$ be the number of triangles in $\mathcal B$ whose edges have lengths greater than $\epsilon$. There is 
\begin{equation}
 \lime \epsilon N'(\epsilon) = 2 \cvL.
\end{equation}
\item Let $N''(\epsilon)$ be the number of triangles in $\mathcal B$ whose Euclidean area is larger than $\epsilon>0$. There is 
\begin{equation}
 \lime \epsilon^{\frac{1}{2}} N''(\epsilon) = 4 \int_{0}^{\infty} \sum_{i=1}^{\infty}\sin( \pi |I_i(s)|) ds \cvL.
\end{equation}
\end{enumerate}
\end{thm}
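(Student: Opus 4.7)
The plan is to prove Theorem~\ref{thm:Brownian} by combining (i) a bijection between the triangles of $\B$ and the dislocations of the self-similar fragmentation $(\Theta_e(t),t\geq 0)$ associated with the Brownian excursion $e$, and (ii) the paper's general technique for computing the asymptotic number of large dislocations in a self-similar fragmentation. The first ingredient rests on the following observation: each local minimum of $e$ at a point $c$ with value $h:=e(c)$ lies in a unique maximal interval $(a,b)$ on which $e\geq h$, with $e(a)=e(b)=h$, so the three points $\{a,c,b\}$ form a $\sim_e$-equivalence class whose image on $\partial\D$ is a triangle of $\B$; conversely every triangle of $\B$ arises in this way exactly once. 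Writing $y:=b-a$, $s_1:=(c-a)/y$ and $s_2:=1-s_1$, the three arcs between consecutive vertices on $\partial\D$ have lengths $ys_1$, $ys_2$ and $1-y$, so the three chords have lengths $2\sin(\pi ys_1)$, $2\sin(\pi ys_2)$ and $2\sin(\pi y)$, and by the $abc/4R$ formula for inscribed triangles the Euclidean area equals $2\sin(\pi ys_1)\sin(\pi ys_2)\sin(\pi y)$.

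Under this bijection, $N'(\epsilon)$ and $N''(\epsilon)$ become the numbers of dislocations whose $(y,s_1,s_2)$ satisfy, respectively, $2\sin(\pi ys_1)\wedge 2\sin(\pi ys_2)\wedge 2\sin(\pi y)>\epsilon$ and $2\sin(\pi ys_1)\sin(\pi ys_2)\sin(\pi y)>\epsilon$. I would then invoke the general theorem, announced earlier in the paper, on the number of large dislocations in a self-similar fragmentation. That theorem supplies a Campbell-type formula writing $\Exp{N(\epsilon)}$ as a time integral of $\sum_i X_i(s)^\alpha$ against the $\nu$-mass of the ``large'' set, together with a second-moment bound derived from the Poissonian structure of the dislocation point process in the spirit of the many-to-one formula for self-similar fragmentations.

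Plugging in the classical self-similarity index and dislocation measure $\nu$ of the Brownian fragmentation, the large-dislocation $\nu$-masses in parts (1) and (2) are computed by change of variables. In part (1) the three edge conditions force $\min(ys_1,ys_2,1-y)\gtrsim\epsilon$; the dominant contributions come from dislocations with $y$ of order $\epsilon$, and the resulting integral against the self-similar fragment distribution collapses, after tracking the numerical factors in the Brownian $\nu$, to the deterministic constant $2$. In part (2) the area condition is instead dominated by asymmetric splits of \emph{macroscopic} fragments, in which one of the $s_i$ is of order $\epsilon$; the innermost $\nu$-integral produces a factor proportional to $\sin(\pi y)$ per fragment of size $y=|I_i(s)|$, and summing over the fragments alive at height $s$ and integrating over $s$ yields the random limit $4\int_0^\infty\sum_i\sin(\pi|I_i(s)|)\,ds$.

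The principal technical hurdle is the $L^2$ convergence, especially in part (2) where the limit is a random functional of the whole excursion. Beyond matching first moments, one must exhibit \emph{joint} $L^2$ convergence of the pre-limit count and the candidate limit, which requires a careful second-moment analysis using Palm calculus for the dislocation point process. Pairs of distinct dislocations split into those occurring in nested fragments — handled via the branching property of self-similar fragmentations — and those in disjoint fragments, handled by independence; each contribution must be matched to the square of $\sum_i\sin(\pi|I_i(s)|)$ in order to cancel the first-moment square and force $\mathrm{Var}\bigl(\epsilon^{1/2}N''(\epsilon)\bigr)\to 0$. The analogous step for part (1) follows the same template but is notably easier because its limit is deterministic.
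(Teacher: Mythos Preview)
Your high-level strategy --- bijection with dislocations of $\Theta_e$, then count large dislocations via the paper's general machinery --- is correct, and your identification of the edge-length and area functionals is accurate. But the $L^2$ step, which you rightly flag as the main hurdle, has a genuine gap in your plan, and the paper's route differs from yours in an essential way.

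First, a literal error: in part~(2) the limit is a nondegenerate random variable, so $\mathrm{Var}\bigl(\epsilon^{1/2}N''(\epsilon)\bigr)$ does \emph{not} tend to~$0$; what must vanish is $\Exp{\bigl(\epsilon^{1/2}N''(\epsilon)-L\bigr)^2}$ with $L=4\int_0^\infty\sum_i\sin(\pi|I_i(s)|)\,ds$. Your proposed Palm-type two-point computation, splitting pairs of dislocations into nested and disjoint and matching cross-moments with $L$, would be considerably harder than what is actually needed and you have not indicated how the cross term $\Exp{\epsilon^{1/2}N''(\epsilon)\cdot L}$ would be handled.

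The paper avoids this entirely by a \emph{compensator reduction}. Working in the homogeneous time scale, the dislocations form a Poisson point process, so $N(\epsilon)-A(\epsilon)$ is a martingale with $A(\epsilon)=\int_0^\infty\sum_i f_\epsilon(X_i(t))\,dt$ and $f_\epsilon(x)=\nu_e\{\s:\psi(x,\s)>\epsilon\}$; its quadratic variation gives $\Exp{(N(\epsilon)-A(\epsilon))^2}=\Exp{A(\epsilon)}$. Hence as soon as $\epsilon^\lambda A(\epsilon)\to L$ in $L^2$, the same holds for $\epsilon^\lambda N(\epsilon)$. This reduces everything to studying the compensator $A(\epsilon)$, which is an integral of an explicit function of fragment sizes --- no second-moment Palm calculus on $N$ is needed. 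For part~(2) one then simply checks that $\epsilon^{1/2}f_\epsilon(x)\to 4x^{1/2}\sin(\pi x)$ pointwise with a uniform bound $\epsilon^{1/2}f_\epsilon(x)\leq Cx^{3/2}$, and dominated convergence against the square-integrable majorant $\int_0^\infty\sum_i X_i(t)^{3/2}\,dt$ yields $L^2$ convergence of $A''(\epsilon)$ immediately. A final change of time scale (Lemma~\ref{lem:index}) converts the homogeneous expression into the excursion form with $|I_i(s)|$.

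A second point you do not address: neither $\psi'$ nor $\psi''$ has the product form $\varphi(\s)x^b$ assumed in the general large-dislocation theorem. For part~(1) the paper sandwiches $N'(\epsilon)$ between counts for the proxy $\psi_1(x,\s)=(1-s_1)x$ evaluated at $\pi^{-1}\arcsin(\epsilon/2)$, using that for $x\leq\tfrac12$ the shortest edge is $2\sin(\pi x s_2)$, and disposes of the contribution from $x>\tfrac12$ via a separate lemma. Your heuristic that ``dominant contributions come from $y$ of order $\epsilon$'' is also off: the deterministic constant in~(H1) arises from a renewal theorem for the tagged-fragment subordinator, with contributions spread across all scales $y>2\epsilon$, not concentrated near $y\sim\epsilon$.
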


We note that in the first case the limit is a constant, while in the second case the limit is a random variable. It turns out that this surprising phenomenon is an instance of a general phase transition revealed in Theorem \ref{thm:large} below. To justify that the random variable $ \int_{0}^{\infty} \sum_{i=1}^{\infty}\sin( \pi |I_i(s)|) ds$ is indeed square integrable, 
let us compare it with the {\it Brownian excursion area} $\mathcal A_e$,
\begin{equation}
\mathcal A_e:=\int_{0}^{\infty} \sum_{i=1}^{\infty}|I_i(t)| dt =\int_0^1 e(s)ds.
\end{equation}
It is known that $\Exp{\mathcal A_e^k}<\infty$ for every $k\in \N$, see \cite{janson2007area}. Noticing that
\begin{equation} 
\int_{0}^{\infty} \sum_{i=1}^{\infty}\sin( \pi |I_i(s)|) ds\leq \pi \mathcal A_e,
\end{equation}
we see that it is indeed square integrable. 

It has been proved in \cite{aldous1994triangulating} that for $0\leq x_1\leq x_2\leq x_3\leq 1$, the expected value of the number of the triangles whose vertices are at position $(e^{i2\pi x_1},e^{i2\pi x_2},e^{i2\pi x_3})$ has density
\begin{equation}
\frac{1}{4\pi} (x_2-x_1)^{-\frac{3}{2}}(x_3-x_2)^{-\frac{3}{2}}(1+x_1-x_3)^{-\frac{3}{2}} dx_1dx_2dx_3, \quad 0\leq x_1\leq x_2\leq x_3\leq 1.
\end{equation}
By integrating the density function, we may deduce that
\begin{linenomath}\begin{align}
\lim_{\epsilon \to 0} \epsilon \Exp{N'(\epsilon)} &= 2, \\
\lim_{\epsilon \to 0} \epsilon^{\frac{1}{2}} \Exp{N''(\epsilon)} &= \frac{\sqrt{2}\pi}{2} J_1(\frac{\pi}{2}),
\end{align}\end{linenomath}
where $J_1$ is the Bessel function of the first kind, with $J_1(\frac{\pi}{2})\approx 0.5668$. However, this result is weaker than our convergence in $L^2(\mathbb P)$ for random variables.

Theorem \ref{thm:Brownian} is proved in Section \ref{section:Brownian}. Our approach to tackle this problem is through a connection with fragmentation processes. It has been proved by Bertoin \cite{bertoin2002self} that the process $\Theta_e$ given by \eqref{eq:Theta} is an example of a {\it self-similar interval-partition fragmentation with index $-\frac{1}{2}$} (see Section \ref{section:background} for background). 
Roughly speaking, The process $\Theta_e$ describes how the interval $(0,1)$ splits into smaller intervals as time grows. 
For $s> t$, $\Theta_e (s)$ is obtained from $\Theta_e (t)$ by breaking randomly into pieces each component of $\Theta_e(t)$ according to a law that only depends on the length of this component, and independently of the others. We will specify this law in Section \ref{section:Brownian}. An interval splitting event is called a {\it dislocation}. We point out that in each dislocation of $\Theta_e$, an interval $I\subset (0,1)$ of length $|I|$ must split into two pieces $(I_1, I_2)$ with $|I_1|+|I_2| = |I|$. Such a dislocation is marked by $(|I|,(|I_1|/|I|,|I_2|/|I|)) \in (0,1]\times \Delta$, where  
$\Delta:=\{(s_1,s_2)\in [0,1]^2: s_1+s_2= 1, s_1\geq s_2 \}$.

\begin{figure}[h]
\begin{minipage}[t]{0.45\linewidth}
\centering
\includegraphics[width=1\textwidth]{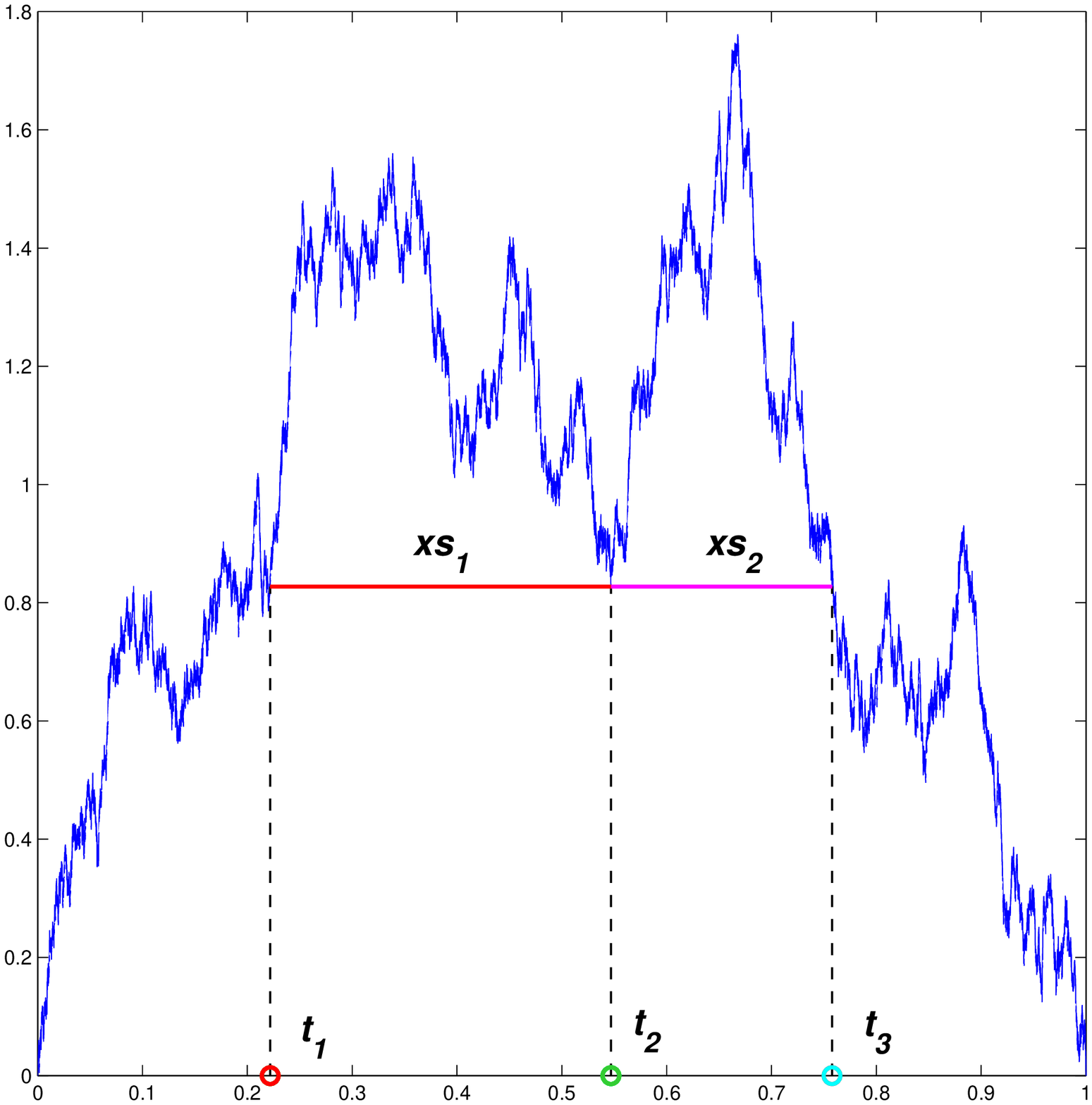}
\end{minipage}
\begin{minipage}[t]{0.45\linewidth}
\centering
\includegraphics[width=1\textwidth]{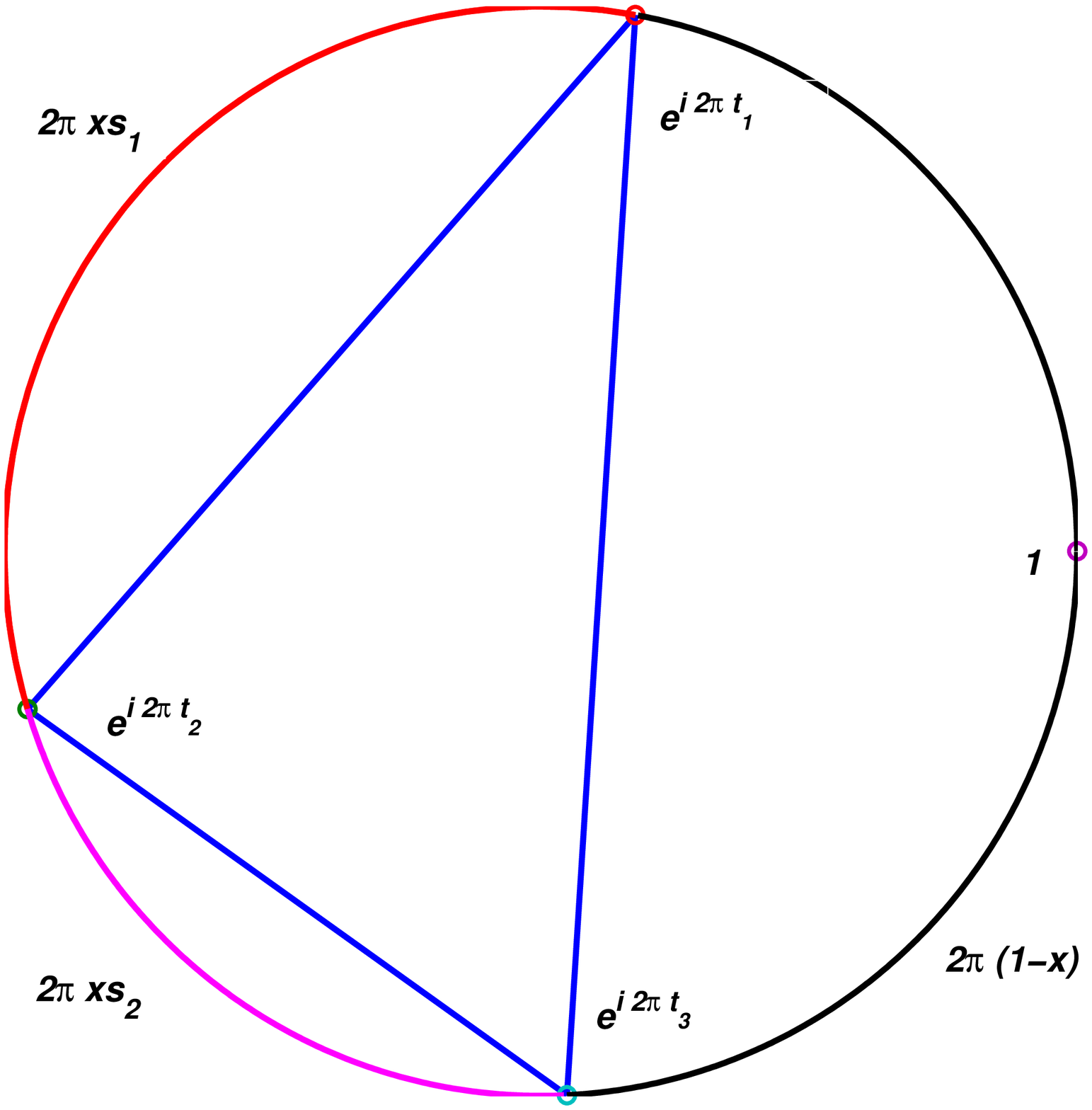}
\end{minipage}
\caption{ \label{fig_dual}\textbf{The correspondence between dislocations and triangles.} The local minimum $t_2$ of the Brownian excursion $e$ on the left induces a dislocation of $\Theta_e$, which corresponds to the triangle in $\B$ on the right.
In this dislocation the interval $(t_1, t_3)$ of length $x= t_3-t_1$ produces two intervals $(t_1, t_2)$ and $(t_2, t_3)$. Set $s_1 = \max (t_2- t_1, t_3-t_2)/x$ and $s_2 = 1- s_1$, then this dislocation is marked by $(x, (s_1,s_2))$. Since $t_1\overset{e}{\sim}t_2 \overset{e}{\sim} t_3$, the chords $[\e^{i2 \pi t_1}, \e^{i2 \pi t_2}]$, $[\e^{i2 \pi t_2}, \e^{i2 \pi t_3}]$ and $[\e^{i2 \pi t_3}, \e^{i2 \pi t_1}]$ are included in $\B$, and they form a triangle. Hence this dislocation in $\Theta_e$ marked by $(x, (s_1,s_2))$ corresponds to the triangle in $\mathcal B$ whose vertices divide the circle into three arcs of lengths $(2\pi(1-x), 2\pi xs_1,2\pi xs_2)$. }
\end{figure}

The following observation plays a key role in this work. 
\begin{prop}\label{prop:bijection}
There is a bijection between the faces in $\mathcal B$ and the dislocations in $\Theta_e$. 
If a dislocation in $\Theta_e$ is marked by $(x ,(s_1,s_2))\in (0,1]\times \Delta$, then the corresponding triangle in $\mathcal B$ has edges of lengths $(2\sin(\pi x),2\sin(\pi xs_1),2\sin(\pi xs_2) )$. 
\end{prop}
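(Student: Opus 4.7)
The plan is to construct the bijection explicitly from the correspondence between strict local minima of the normalized Brownian excursion $e$ and the dislocation events of $\Theta_e$, following the geometric picture of Figure~\ref{fig_dual}. Almost surely, the countably many local minima of $e$ are strict and attained at distinct values in $(0,1)$, and each one indexes a dislocation of $\Theta_e$; it therefore suffices to identify each such minimum with a triangular face of $\B$.

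Fix a strict local minimum of $e$ at $t_2\in(0,1)$ with value $u:=e(t_2)$, and set
\[
 t_1:=\sup\{s<t_2:\;e(s)\leq u\},\qquad t_3:=\inf\{s>t_2:\;e(s)\leq u\}.
\]
Since $e(0)=e(1)=0<u$, continuity of $e$ gives $t_1,t_3\in(0,1)$ with $e(t_1)=e(t_3)=u$ and $e>u$ strictly on $(t_1,t_2)\cup(t_2,t_3)$. This is precisely the dislocation at which the interval $(t_1,t_3)$, of length $x:=t_3-t_1$, splits into $(t_1,t_2)$ and $(t_2,t_3)$, with mark $(x,(s_1,s_2))$ where $s_1:=\max(t_2-t_1,t_3-t_2)/x$ and $s_2:=1-s_1$. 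Straight from the definition of $\overset{e}{\sim}$ one obtains $t_1\overset{e}{\sim}t_2\overset{e}{\sim}t_3\overset{e}{\sim}t_1$, so the three chords $[\e^{i2\pi t_j},\e^{i2\pi t_k}]$ ($j\neq k$) lie in $\B$ and enclose an open triangle $T$.

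The crux is to show that $T$ is actually a face of $\B$. Suppose an additional chord $[\e^{i2\pi s},\e^{i2\pi s'}]\subset\B$ crosses $T$: then $s$ and $s'$ must lie on two distinct open arcs among $(t_1,t_2)$, $(t_2,t_3)$ and $(t_3,1)\cup[0,t_1)$. In each of the resulting cases the closed interval $[s\wedge s',s\vee s']$ contains at least one of $t_1,t_2,t_3$, at which $e=u$; moreover, at least one endpoint of the chord sits in $(t_1,t_2)\cup(t_2,t_3)$, where $e>u$, and $s\overset{e}{\sim}s'$ forces $e(s)=e(s')>u$. This yields $\min_{[s\wedge s',s\vee s']}e\leq u<e(s)$, contradicting $s\overset{e}{\sim}s'$. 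For the reverse direction I would invoke the fact (stated in the introduction and proved in \cite{aldous1994triangulating}) that every face of $\B$ is an open triangle with vertices $\e^{i2\pi a},\e^{i2\pi b},\e^{i2\pi c}$, $0\leq a<b<c<1$; the three bounding chords force $e(a)=e(b)=e(c)=\min_{[a,c]}e=:u$. If $e$ took the value $u$ again at some $b'\in(a,b)$ (the case $b'\in(b,c)$ is symmetric), then $b'\overset{e}{\sim}c$, so $[\e^{i2\pi b'},\e^{i2\pi c}]\in\B$; but this chord jumps over the vertex $\e^{i2\pi b}$ and hence crosses the open face, a contradiction. Thus $b$ is a strict local minimum with $(t_1,t_2,t_3)=(a,b,c)$, and the forward construction returns exactly this face.

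To finish, the vertices $\e^{i2\pi t_1},\e^{i2\pi t_2},\e^{i2\pi t_3}$ partition $\partial\D$ into three arcs of normalized lengths $\{s_1 x,s_2 x,1-x\}$, and a chord subtending a normalized arc of length $\alpha\in[0,1]$ has Euclidean length $2\sin(\pi\alpha)=2\sin(\pi(1-\alpha))$; this produces the three edge lengths $(2\sin(\pi x),2\sin(\pi xs_1),2\sin(\pi xs_2))$ claimed in the proposition. I expect the main obstacles to be the two non-existence arguments above---ruling out an intruding chord in $T$ for the forward direction and ruling out a second time with $e=u$ inside $(a,b)\cup(b,c)$ for the reverse---since both genuinely use the a.s.\ strict-local-minimum structure of the Brownian excursion.
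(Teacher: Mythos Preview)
Your argument is correct and follows essentially the same route as the paper: both identify faces of $\B$ with three-point classes $\{t_1,t_2,t_3\}$ under $\overset{e}{\sim}$ and these in turn with dislocations of $\Theta_e$. The only cosmetic difference is that the paper disposes of any fourth point in one stroke via the observation $|cl_e(s)|\leq 3$ (an immediate consequence of $\B$ being a triangulation), whereas you reach the same conclusion through your two chord-crossing contradictions; you also spell out the edge-length computation $2\sin(\pi\alpha)$ that the paper leaves as ``basic geometry''.
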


A formal proof of Proposition \ref{prop:bijection} is given in Section \ref{section:Brownian}. This correspondence is illustrated in Figure \ref{fig_dual}. 
This bijection should be clear since the faces in $\mathcal B$ and the dislocations in $\Theta_e$ are both in bijection with the local minima of $e$. The second statement is simply obtained by basic geometry. By this bijection, if a triangle in $\B$ corresponds to a dislocation in $\Theta_e$ marked by $(x, (s_1,s_2))\in (0,1]\times \Delta$, then the length of its shortest edge is 
\begin{equation} \label{eq:psi'}
\psi'(x, (s_1,s_2)):=\min(2 \sin(\pi x), 2 \sin(\pi xs_1), 2 \sin(\pi xs_2)).
\end{equation}
Observing that the angle between the edge of length $2 \sin(\pi xs_1)$ and the edge of length $2 \sin(\pi xs_2)$ is $\pi(1-x)$, we find that the area of this triangle is
 \begin{equation} \label{eq:psi''}
 \psi''(x, (s_1,s_2)) :=2 \sin(\pi xs_1) \sin(\pi xs_2)\sin(\pi x).
\end{equation}
Hence with our fragmentation point of view, $N'(\epsilon)$ is the number of dislocations in $\Theta_e$ whose marks satisfy $\psi'(x, (s_1,s_2))>\epsilon$. A similar statement holds for $N''(\epsilon)$. In Section \ref{section:largedislocation}, we introduce the notion of {\it large dislocations} that generalizes these families of dislocations. We study the number of large dislocations in the context of a general self-similar fragmentation and obtain Theorem \ref{thm:large} below, which leads to the final proof of Theorem \ref{thm:Brownian}. We see that a phase transition appears in Theorem \ref{thm:large}, which explains the different limits in the two parts of Theorem \ref{thm:Brownian}. Our results on large dislocations are quite general which also enable us to answer the following two questions.



The first one is to study a generalization of the Brownian triangulation, the (geodesic) stable laminations of the disk introduced by Kortchemski \cite{kortchemski2014random}. For $\beta \in(1,2]$, the {\it $\beta$-stable lamination} is a random collection of non-crossing chords of the disk, which coincides with the Brownian triangulation when $\beta=2$, and is encoded by the normalized excursion of $\beta$-strictly stable L\'evy process when $\beta \in(1,2)$. For $\beta  \in(1,2)$, we find a bijection between the faces (which are not triangles) in the $\beta$-stable lamination and the dislocations in a certain self-similar fragmentation, which enables us to study the number of large faces in the $\beta$-stable lamination. 

 The second question is to determine the law of the length of the longest chord. For the Brownian triangulation, this has been calculated in \cite{aldous1994triangulating} by using discrete approximation by $T_n$; for the stable laminations, it is an open question due to Kortchemski, which is also mentioned in \cite{curien2014dissecting}. Noticing that the longest chord is an edge of the {\it centroid}, the (almost surely) unique face that contains the origin, we will answer this question by exploring the dislocation associated with the centroid.

 In short, we develop a study of the number of large dislocations in self-similar fragmentations and apply our results to estimate the number of faces in the Brownian triangulation and stable laminations. Our method also opens the way to study a number of other interesting problems. To mention just a few, we may consider the role of large dislocations in random recursive triangulations \cite{curienlegall2011random}, self-similar trees \cite{haasmiermont04genealogy} and quadtrees \cite{curienquadtrees}. 

The rest of the paper is organized as follows. 
In Section \ref{section:large}, we study the number of large dislocations in self-similar fragmentations. 
In Section \ref{section:Brownian}, we prove Theorem \ref{thm:Brownian} and find the law of the length of the longest chord in the Brownian triangulation. 
In Section \ref{section:stable}, we investigate the large faces and the longest chord in the stable laminations. 
In Section \ref{section:technical}, we complete the proofs of Lemma \ref{lem:m2} and Lemma \ref{lem:small}.


\section{Large dislocations in a self-similar fragmentation}\label{section:large}

In this section we study the number of large dislocations in self-similar fragmentations. The main result, Theorem \ref{thm:large}, is stated and proved in Section \ref{section:largedislocation}. Before that, we briefly review some basic facts about self-similar fragmentations in Section \ref{section:background}, and, in preparation for proving Theorem \ref{thm:large}, we explain how to change the index of self-similarity in Section \ref{section:index} and discuss the tagged fragment in Section \ref{section:tagged}.

\subsection{Background on self-similar fragmentations}\label{section:background}
We refer to \cite{berestycki2002ranked,bertoin2001homogeneous, bertoin2002self} for the general framework of self-similar fragmentations. Here we only give a short presentation. {\it A self-similar mass fragmentation with index of self-similarity $\alpha\in \R$} is a c\`adl\`ag Markov process $\X^{(\alpha)} = \left((X^{(\alpha)}_1(t), X^{(\alpha)}_2(t), \ldots ), ~t\geq 0\right)$ taking values in
\begin{equation}\Sd := \left\{\mathbf{s} = (s_1, s_2, s_3,  \cdots): 1\geq s_1 \geq s_2 \geq \cdots \geq 0,~and~ \sum_{i= 1}^{\infty} s_i \leq 1 \right\},\end{equation} 
which satisfies the {\it branching} and {\it scaling} properties. The branching property means that for every sequence $\mathbf{x} = (x_1,x_2, \cdots) \in \Sd$ and every $t\geq 0$, the distribution of $\X^{(\alpha)}$ given $\X(0)= \mathbf{x}$ is the same as the union of the masses, arranged in the decreasing order, of a sequence of independent fragmentations $(\X^{i})_{i\geq 1}$, where each $\X^{i}$ has distribution $\mathbb{P}_{x_i}$, the law of $\X^{(\alpha)}$ that starts from the state $x_i:=(x_i,0,\cdots)\in \Sd$. The scaling property means that for $x \in [0,1]$, the distribution of the re-scaled process $(x\X^{(\alpha)}(x^{\alpha}t))_{t\geq 0}$ under $\mathbb{P}_{1}$ is $\mathbb{P}_{x}$. 

For simplicity, throughout the rest of this paper we will implicitly suppose that any fragmentation starts from a single fragment with unit mass, and we will work under $\mathbb{P}:= \mathbb{P}_1$. 

A self-similar fragmentation is characterized by a triple $(\alpha, c, \nu)$: $\alpha\in \R$ is the index of self-similarity; the non-negative real constant $c$ is the {\it erosion rate}, which describes the speed at which the fragments melt continuously; the $\sigma$-finite measure $\nu$ on $\Sd$ verifying  
\begin{equation}
\nu(\{(1, 0,\cdots)\})=0, ~\text{and}~ \int_{\Sd}(1-s_1)\nu(d\s)<\infty
\end{equation} 
is the {\it dislocation measure}, which describes the statistics of the smaller pieces generated in a dislocation. For $ \s= (s_1,s_2, \cdots)\in \Sd$, a fragment of mass $x$ splits into masses $(xs_1, xs_2, \cdots)$ at rate $x^{\alpha}\nu(d\s)$. We say a fragmentation is {\it conservative} if its dislocation measure satisfies 
\begin{equation}
\nu \left( \s\in \Sd : \sum_{i=1}^{\infty}s_i < 1 \right)=0.
\end{equation}
Otherwise it is {\it dissipative}. 

A parallel notion is the {\it self-similar interval-partition fragmentations}, which was mentioned in the \hyperref[section:intro]{introduction}. An interval-partition fragmentation $\Theta = (\Theta(t), t\geq0) $ studies how the interval $(0,1)$ splits into smaller open intervals as time grows. If the existing intervals at $t>0$ form a sequence $(I_1(t), I_2(t),\cdots)$, arranged in the decreasing order of length, then the state of the interval-partition fragmentation at $t$ is their union $\Theta(t) = \bigcup_{i\in \N}I_i(t)$. Clearly $(\Theta_t, t\geq 0)$ is a family of nested open subsets of $(0,1)$. We observe that an interval-partition fragmentation naturally yields a mass fragmentation, specifically the length sequence process $(|I_1(t)|, |I_2(t)|,\cdots)_{t\geq 0}$. Therefore, we call $\Theta$ a self-similar interval-partition fragmentation if $\Theta$ is associated with a self-similar mass fragmentation.

\subsection{Changing the index of self-similarity}\label{section:index}
A self-similar fragmentation process with index of self-similarity zero is a {\it homogeneous fragmentation process}. 
For any self-similar fragmentation $\X^{(\alpha)}$ with no erosion and index of self-similarity $\alpha\in \R$, we are able to change the index $\alpha$ to $0$ by the following transformation introduced in \cite{bertoin2002self}. Let $\Theta^{(\alpha)}$ be an interval fragmentation whose associated mass fragmentation is $\X^{(\alpha)}$ as in Section \ref{section:background}. For $x\in(0,1)$ and $t\geq 0$, if $x\in \Theta^{(\alpha)}(t)$, then let $I^{(\alpha)}_x(t)$ be the interval component of $\Theta^{(\alpha)}(t)$ that contains $x$ at time $t$; if $x\not\in \Theta^{(\alpha)}(t)$, then by convention $I^{(\alpha)}_x(t):= \emptyset$. We define a family $T:= (T_x, x\in (0,1))$ by 
\begin{equation}\label{eq:timechange}
T_x(t):= \inf \left\{u\geq 0:  \int_0^u |I^{(\alpha)}_x(r)|^{\alpha}dr >t \right\}, \quad t\geq 0. 
\end{equation}
For $t\geq 0$, the set $\Theta^{(\alpha)}(T(t)):= \bigcup_{x\in(0,1)} I^{(\alpha)}_x(T_x(t))$ is open since it is the union of open intervals, and the family $(\Theta^{(\alpha)}(T(t)),t\geq 0)$ is nested. So we obtain a new interval-partition fragmentation $(\Theta(t))_{t\geq 0}:=(\Theta^{(\alpha)}(T(t))_{t\geq 0}$. According to Theorem 2 in \cite{bertoin2002self}, $\Theta$ is a homogeneous fragmentation with no erosion and the same dislocation measure $\nu$. Let $\X$ be the mass fragmentation associated with $\Theta$. We call $\X$ the homogeneous counterpart of $\X^{(\alpha)}$. 

In view of future use we state the following lemma, which is an extension of Equation (6) in \cite{bertoin2003asymptotic}.  

\begin{lem}\label{lem:index}
We consider a self-similar fragmentation $\X^{(\alpha)}= \left((X^{(\alpha)}_i(t))_{i\in \N}, t\geq 0\right)$ with no erosion and index of self-similarity $\alpha\in \R$, and its homogeneous counterpart $\X= \left((X_i(t))_{i\in \N}, t\geq 0\right)$. Let $f: [0,\infty) \to [0,\infty)$ be a measurable function, then the following equality holds: 
 \begin{equation}
\int_{0}^{\infty}\sum_{i=1}^{\infty} f(X_i(t))dt = \int_{0}^{\infty}\sum_{i=1}^{\infty} (X^{(\alpha)}_i(t))^{\alpha} f(X^{(\alpha)}_i(t))dt.
\end{equation}
\end{lem}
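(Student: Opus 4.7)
The plan is to reduce both sides to integrals over $(0,1)\times[0,\infty)$ via a ``tagged fragment'' identity, then identify them using a change of variable driven by the time change $T_x$ of \eqref{eq:timechange}.

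First I would use the elementary observation that, for any interval partition of $(0,1)$ with decreasingly ordered lengths $\s=(s_i)_{i\geq 1}\in\Sd$ and any measurable $g:[0,\infty)\to[0,\infty)$,
\begin{equation}
\sum_{i=1}^{\infty} g(s_i) = \int_0^1 \frac{g(|I_x|)}{|I_x|}\ind{|I_x|>0}\,dx,
\end{equation}
where $I_x$ denotes the interval component containing $x$. Applied to both sides of the lemma --- with $g=f$ on the left and $g(s)=s^{\alpha}f(s)$ on the right --- and combined with Fubini--Tonelli (licit since $f\geq 0$), this reduces the claim to the pointwise identity
\begin{equation}
\int_0^\infty \frac{f(|I_x(t)|)}{|I_x(t)|}\ind{|I_x(t)|>0}\,dt = \int_0^\infty |I^{(\alpha)}_x(u)|^{\alpha-1} f(|I^{(\alpha)}_x(u)|)\ind{|I^{(\alpha)}_x(u)|>0}\,du
\end{equation}
for every $x\in(0,1)$.

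Second, I would perform the change of variable $u=T_x(t)$. Since the fragmentation has no erosion, $r\mapsto|I^{(\alpha)}_x(r)|$ is a c\`adl\`ag step function, strictly positive on the lifetime $[0,\zeta_x)$ of the tagged fragment, so the map $u\mapsto\varphi_x(u):=\int_0^u|I^{(\alpha)}_x(r)|^{\alpha}\,dr$ is continuous, piecewise linear and strictly increasing on $[0,\zeta_x)$. By the defining relation \eqref{eq:timechange}, $T_x=\varphi_x^{-1}$ on the range of $\varphi_x$, and the construction of $\X$ gives $|I_x(t)|=|I^{(\alpha)}_x(T_x(t))|$. Substituting $t=\varphi_x(u)$ and $dt=|I^{(\alpha)}_x(u)|^{\alpha}\,du$ then turns the left-hand side of the pointwise identity into the right-hand side.

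The main obstacle, though ultimately minor, is the careful handling of the time change outside the lifetime interval and at its jump points. After the extinction time $\zeta_x$ (or for $t\geq \varphi_x(\zeta_x)$) the indicator kills the integrand on both sides; the jumps of $T_x$ are countable and hence negligible for Lebesgue measure; and joint measurability in $(x,t)$ is immediate from c\`adl\`ag regularity. The whole argument is thus a piecewise application of the Lebesgue change-of-variables formula, essentially the same as equation~(6) of \cite{bertoin2003asymptotic} but extended from the specific $f$ used there to arbitrary measurable $f\geq 0$.
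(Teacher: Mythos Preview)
Your proposal is correct and follows essentially the same route as the paper: rewrite $\sum_i f(X_i(t))$ via the tagged-fragment identity $\int_0^1 |I^{(\alpha)}_x(T_x(t))|^{-1} f(|I^{(\alpha)}_x(T_x(t))|)\,dx$, swap the integrals by Tonelli, then perform the change of variable $u=T_x(t)$ with $dt=|I^{(\alpha)}_x(u)|^{\alpha}\,du$ and reassemble.

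One small inaccuracy worth flagging: the assertion that $r\mapsto |I^{(\alpha)}_x(r)|$ is a \emph{step function} (and hence that $\varphi_x$ is piecewise linear) is only true when $\nu(\Sd)<\infty$; in general the dislocation measure may be infinite and the tagged fragment has infinitely many jumps in any interval. This does not break your argument, however: $\varphi_x$ is still absolutely continuous on $[0,\zeta_x)$ with a.e.\ derivative $|I^{(\alpha)}_x(\cdot)|^{\alpha}$, and it is strictly increasing there (since $|I^{(\alpha)}_x|>0$ on the lifetime), so the Lebesgue change-of-variables formula applies just as written.
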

\begin{proof}
For every $t>0$, we have
\begin{equation}\sum_{i=1}^{\infty} f( X_i(t)) = \int_0^1|I^{(\alpha)}_x(T_x(t))|^{-1} f(|I^{(\alpha)}_x(T_x(t))|) dx.\end{equation}
Changing variable by $s = T_x(t)$, thus $dt =|I^{(\alpha)}_x(s)|^{\alpha} ds$, we have
\begin{equation}
\int_{0}^{\infty}\sum_{i=1}^{\infty} f( X_i(t)) dt 
= \int_0^1dx \int_{0}^{\infty}|I^{(\alpha)}_x(s)|^{\alpha-1} f(|I^{(\alpha)}_x(s)|) ds
= \int_{0}^{\infty} \sum_{i=1}^{\infty} (X^{(\alpha)}_i(s))^{\alpha} f( X^{(\alpha)}_i(s)) ds.
\end{equation}
\end{proof}
\begin{rem}\label{rem_Sigma}
We consider the homogeneous fragmentation $\X$ as above. For $p>1$, set 
\begin{equation}\label{eq:Sigma}
\Sigma(p):= \int_{0}^{+\infty} \sum_{i=1}^{\infty} X_i(r)^{p}dr.
\end{equation}
Lemma \ref{lem:index} implies that $\Sigma(p)$ has the same law as
\begin{equation}
\Sigma^{(1-p)}(1):= \int_{0}^{+\infty} \sum_{i=1}^{\infty} X^{(1-p)}_i(r)dr,
\end{equation}
where $\X^{(1-p)}$ is a self-similar fragmentation with index $1-p<0$, no erosion and the same dislocation measure $\nu$. The random variable $\Sigma^{(1-p)}(1)$ is called the {\it area of the fragmentation $\X^{(1-p)}$}, whose law is described by Theorem 2.1 in \cite{bertoin2012area}. Therefore, we also know the law of $\Sigma(p)$. In particular, we note that $\Sigma(p)$ has finite $k$-moment for $k\in \N$, see Lemma 3.1 in \cite{bertoin2012area}. 
\end{rem}

\subsection{The tagged fragment of a homogeneous fragmentation}\label{section:tagged}
 Let $\X= \left((X_i(t))_{i\in \N}, t\geq 0\right)$ be a homogeneous fragmentation with no erosion and dislocation measure $\nu$. Denote the natural filtration of $\X$ by $(\mathcal F_{t} = \sigma(X_i(s),s\leq t))_{t \geq 0}$.
In this section we recall some results about the tagged fragment taken from Section 4 of \cite{bertoin2002self}. 

As in Section \ref{section:background}, let $\Theta$ be an interval fragmentation whose associated mass fragmentation $\X$. In particular, $\Theta_t = \bigcup_{i\in \N} I_i(t)$, $t\geq 0$. Given a uniform random variable $V$ in $(0,1)$ which is independent of $\Theta$, the {\it tagged fragment} is the interval component that contains $V$. Denote the rank and the length of the tagged fragment at time $t$ respectively by $n(t)$ and 
\begin{equation}\chi(t):= |I_{n(t)}(t)| =\sum_{i=1}^{\infty} \ind{V\in I_i(t)}|I_i(t)| .\end{equation}
If $V\not\in \Theta(t)$, then let $n(t)=-1$ and $\chi(t)=0$ by convention. The tagged fragment is closely related to a subordinator.
\begin{lem}\label{lem:xi} 
The process $\xi=-\log \chi$ is a (possibly killed) $(\mathcal F_t)$-subordinator with Laplace exponent
\begin{equation}\label{eq:phi}
\Phi(p) := \int_{\Sd} \left(1- \sum_{i=1}^{\infty}s_i^{p+1} \right) \nu(d\s), \quad p\geq 0,
\end{equation} 
and its expected value is
\begin{equation}\label{eq:m}
\Exp{\xi(1)}= m:= \left\{
   \begin{array}{lr}
 \int_{\Sd} \sum_{i=1}^{\infty} s_i\log (s_i^{-1}) \nu(d\s) & \text{when $\nu$ is conservative}, \\
 +\infty  &\text{when $\nu$ is dissipative}. 
\end{array}\right.
\end{equation}
Let $dU$ be the potential measure of $\xi$, whose distribution function is
\begin{equation}U(x) := \Exp{\int_0^{\infty} \ind{\xi(t)\leq x} dt}, \quad x\geq 0, \end{equation}
then the Laplace transform of $dU$ is
\begin{equation}\label{eq:U}
\int_{0}^{\infty} \e^{-p x} dU(x) = \Phi(p)^{-1}, \quad p\geq 0.
\end{equation}
Further, let $f: [0,\infty) \to [0,\infty)$ be a measurable function with $f(0)=0$, then
\begin{equation}\label{eq:tagged}
\Exp{\int_{0}^{\infty}\sum_{i=1}^{\infty} f(X_i(t))dt} =\Exp{ \int_0^{\infty} \chi(t)^{-1} f(\chi(t))dt} = \int_0^{\infty} \e^{x} f(\e^{-x})dU(x) .
\end{equation}
\end{lem}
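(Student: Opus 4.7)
The plan is to establish the four assertions in turn, all by combining the branching and scaling properties of $\X$ with the elementary fact that a size-biased pick from a mass partition $\s\in\Sd$ selects index $j$ with probability $s_j$.

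First I would verify that $\xi=-\log\chi$ is an $(\F_t)$-subordinator (possibly killed). The branching property, together with the observation that conditionally on $\F_s$ and on the identity of the tagged block $I_{n(s)}(s)$ of length $\chi(s)$ the tag $V$ is uniformly distributed in this block, shows that $(\chi(s+t)/\chi(s),\,t\geq 0)$ has the same law as $(\chi(t),\,t\geq 0)$ and is independent of $\F_s$. Since $\chi$ is non-increasing and right-continuous, $\xi$ has stationary independent non-negative increments and is thus a subordinator.

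To compute $\Phi$, I would describe the jumps of $\xi$ via the Poisson point process of dislocations driving $\X$: atoms of intensity $dt\otimes \nu(d\s)$ are applied independently to each existing block. Conditionally on a dislocation of the tagged block by $\s$, the tag falls in the $j$-th subpiece with probability $s_j$ (size-biased pick), so $\chi$ gets multiplied by $s_j$ and $\xi$ makes a jump of size $-\log s_j$; the residual probability $1-\sum_j s_j$ corresponds to killing of $\xi$. Consequently $\xi$ has killing rate $\int_\Sd(1-\sum_j s_j)\nu(d\s)$ and Lévy measure equal to the image of $\sum_{j\geq 1} s_j\nu(d\s)$ under $\s\mapsto -\log s_j$; assembling these gives \eqref{eq:phi}. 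Differentiating $\Phi$ at $0^+$ under the integral (monotone convergence) yields \eqref{eq:m} in the conservative case, whereas in the dissipative case $\xi$ has positive killing rate, so $\Exp{\xi(1)}=+\infty$. Equation \eqref{eq:U} is then the standard identity $\int_0^\infty \e^{-px}\,dU(x)=1/\Phi(p)$ for the potential measure of any subordinator.

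For the many-to-one identity \eqref{eq:tagged} I would exploit the conditional uniform law of $V$. Writing $\chi_v(t)$ for the length of the component of $\Theta(t)$ containing $v\in(0,1)$, for every $t\geq 0$ and every measurable $f$ with $f(0)=0$ we have
\begin{equation}
\sum_{i=1}^\infty f(X_i(t))=\sum_{i=1}^\infty |I_i(t)|\cdot |I_i(t)|^{-1} f(|I_i(t)|)=\int_0^1 \chi_v(t)^{-1} f(\chi_v(t))\,dv,
\end{equation}
since Lebesgue measure assigns weight $|I_i(t)|$ to the $i$-th block. Taking expectations, using Fubini and the independence and uniformity of $V$ gives the first equality in \eqref{eq:tagged}; the second follows by writing $\chi(t)=\e^{-\xi(t)}$ and applying $\Exp{\int_0^\infty g(\xi(t))\,dt}=\int_0^\infty g(x)\,dU(x)$ with $g(x)=\e^x f(\e^{-x})$. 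The main technical point is the rigorous extraction of the jump structure of $\xi$ from the dislocation Poisson point process via the size-biased selection argument; this is carried out in detail in Section 4 of \cite{bertoin2002self}, from which the lemma is drawn.
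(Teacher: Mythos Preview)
Your proposal is correct and follows essentially the same route as the paper: the size-biased selection of the tagged fragment, the subordinator structure from \cite{bertoin2001homogeneous,bertoin2002self}, and the conditional law $\Probcond{\chi(t)=X_i(t)}{\F_t}=X_i(t)$ to derive \eqref{eq:tagged}. The only difference is presentational: the paper dispatches the subordinator claim and its Laplace exponent by a direct citation of Theorem~3 in \cite{bertoin2001homogeneous}, whereas you sketch the underlying argument (stationary independent increments via branching, jump structure via the dislocation PPP) before citing the same source at the end.
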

\begin{proof}
Theorem 3 in \cite{bertoin2001homogeneous} shows that $\xi$ is a subordinator with Laplace exponent \eqref{eq:phi}, therefore \eqref{eq:U} and \eqref{eq:m} follow as consequences. It is clear that conditionally on $\F_t$, the distribution of $\chi(t)$ is given by
\begin{equation}
\Probcond{\chi(t)=X_i(t)}{ \F_t} = X_i(t), \quad \forall i\in \N, \qquad \Probcond{\chi(t)=0}{\F_t} =  1 - \sum_{i=1}^{\infty} X_i(t),
\end{equation}
which yields \eqref{eq:tagged}. 
\end{proof}

We say $\xi$ is {\it lattice supported} if the law of $\xi(1)$ is a discrete measure supported by an arithmetic sequence including zero. It is clear that $\xi$ is not lattice supported if $\xi$ is not a compound Poisson process. 
\begin{defn}\label{defn:lattice}
The dislocation measure $\nu$ is ``{\it non-lattice}'' if $\xi$ is not lattice supported.  
\end{defn}

\subsection{Large dislocations in a self-similar fragmentation}\label{section:largedislocation}
We introduce ``large dislocations'' in this section. As in the \hyperref[section:intro]{introduction}, a dislocation of a fragmentation is labeled by $(x, \s = (s_1, s_2, \cdots))\in [0,1]\times \Sd$ if in this dislocation a fragment with size $x$ splits into a sequence of masses $(x\s)$. We recall from the definition of $\Sd$ that $(s_1, s_2, \cdots)$ in arranged in the decreasing order. 

\begin{defn}\label{defn:larged}
Let $\psi: [0,1] \times \Sd  \rightarrow [0, +\infty)$ be a measurable function with $\psi(0, \cdot) \equiv 0$. For $\epsilon>0$, a dislocation marked by $ (x, \s) \in (0,1]\times \Sd$ in a fragmentation process such that $\psi\left(x,\s \right)> \epsilon$ is called a {\it $(\psi,\epsilon)$-large dislocation}.
\end{defn}
This definition is motivated by the question about large triangles in Brownian triangulation in the \nameref{section:intro}. We note that by this definition, the number $N'(\epsilon)$ in Theorem \ref{thm:Brownian} is the number of $(\psi',\epsilon)$-large dislocations in the fragmentation $\Theta_e$ and $N''(\epsilon)$ is the number of $(\psi'',\epsilon)$-large dislocations in $\Theta_e$, where $\psi'$ and $\psi''$ are defined respectively by \eqref{eq:psi'} and \eqref{eq:psi''}, if we regard $\Delta$ as a subset of $\Sd$. 

We now state the main result of this section.

\begin{thm}\label{thm:large}
Consider a self-similar fragmentation $(\X^{(\alpha)}(t))_{t\geq 0} =(X^{(\alpha)}_1(t), X^{(\alpha)}_2(t), \cdots)_{t\geq 0}$ of index $\alpha$ with no erosion and dislocation measure $\nu$. Let $\psi$ be a function defined as in Definition \ref{defn:larged}, and denote by $\Nld(\epsilon)$ the total number of $(\psi,\epsilon)$-large dislocations in $\X^{(\alpha)}$. Suppose that $\psi$ can be expressed in the form
\begin{equation} \label{eq:psi}
\psi (x,\s) = \varphi(\s) x^b,
\end{equation}
where $\varphi: \Sd \to [0,\infty)$ is bounded and $b>0$. Define $\nubar\colon (0,\infty)\to [0,\infty]$ by 
\begin{equation}\label{eq:nubar}
\nubar (u): = \nu \left(\s \in \Sd: \varphi(\s )> u \right), \quad u> 0, 
\end{equation}
and consider respectively the following two (mutually exclusive) situations:  
\begin{enumerate}[label=(H\arabic{*}), ref=(H\arabic{*}),leftmargin=5.0em]
\item there exists $0<a<\frac{1}{b}$, such that $ \nubar (u)=o(u^{-a})$ as $u\to 0^+$, \label{item:H1}
\item there exists $a>\frac{1}{b}$ and $c>0$, such that $\nubar(u) \sim c u^{-a}$, as $u\to 0^+$.\label{item:H2}
\end{enumerate}
Note that if $\nu(\Sd)<\infty$, then \ref{item:H1} is always verified. 
\begin{enumerate}
\item If $\nu$ is non-lattice in the sense of Definition \ref{defn:lattice} and \ref{item:H1} holds, then
\begin{equation}
\lime \epsilon^{\frac{1}{b}} \Nld(\epsilon) = \frac{1}{m} \int_0^{\infty}  \nubar(u^b)du\cvL,
\end{equation}
where $m$ is defined as in \eqref{eq:m}, and by convention $\frac{1}{\infty}=0$.
\item If \ref{item:H2} holds, then
\begin{equation}
\lime \epsilon^{a}\Nld(\epsilon) = c \int_{0}^{+\infty} \sum_{i=1}^{\infty} X^{(\alpha)}_i(t)^{ab+\alpha}dt \cvL.
\end{equation}
\end{enumerate}
\end{thm}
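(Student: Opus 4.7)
The plan is to reduce the statement to a homogeneous fragmentation ($\alpha=0$) and then to control $\Nld(\epsilon)$ through the predictable compensator of its counting process. The time change of Section \ref{section:index} affects only the clocks of individual fragments but preserves the collection of dislocation events and their marks, so $\Nld(\epsilon)$ for $\X^{(\alpha)}$ coincides with $\Nld(\epsilon)$ for its homogeneous counterpart $\X$; Lemma \ref{lem:index} applied with $f(x)=x^{ab}$ simultaneously identifies the random limit appearing in case (2) with $c\,\Sigma(ab)$ in the notation of Remark \ref{rem_Sigma}. Working henceforth with $\X$, the dislocations form a point process with $(\F_t)$-predictable intensity, and the counting process of $(\psi,\epsilon)$-large dislocations admits the compensator
$$A(\epsilon):=\int_0^{\infty}\sum_{i\geq 1}\nubar\bigl(\epsilon X_i(t)^{-b}\bigr)\,dt.$$
The difference $M(\epsilon):=\Nld(\epsilon)-A(\epsilon)$ is a purely discontinuous $L^2$-martingale with $\{0,1\}$-valued jumps, whence the $L^2$ isometry for compensated Poisson integrals gives $\Exp{M(\epsilon)^2}=\Exp{A(\epsilon)}$. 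Once the leading asymptotics of $\Exp{A(\epsilon)}$ are known, this will show that the rescaled martingale $\epsilon^{1/b}M(\epsilon)$ or $\epsilon^{a}M(\epsilon)$ vanishes in $L^2$, and the task will reduce to proving $L^2$-convergence of the rescaled compensator $A(\epsilon)$.

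For the first moment, Lemma \ref{lem:xi} applied with $f(x)=\nubar(\epsilon x^{-b})$ produces the potential integral
$$\Exp{A(\epsilon)}=\int_0^{\infty} e^x\,\nubar(\epsilon e^{bx})\,dU(x).$$
In case \ref{item:H2}, the bound $\nubar(u)\le Cu^{-a}$, which holds uniformly since $\varphi$ is bounded and $\nubar(u)\sim c u^{-a}$ near zero, makes $e^{(1-ab)x}$ a dominator because $ab>1$; dominated convergence combined with \eqref{eq:U} yields $\epsilon^{a}\Exp{A(\epsilon)}\to c\,\Phi(ab-1)^{-1}=c\,\Exp{\Sigma(ab)}$. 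The same pathwise bound gives $\epsilon^{a}A(\epsilon)\le C\,\Sigma(ab)$, and since $\Sigma(ab)\in\LP$ by Remark \ref{rem_Sigma}, the pointwise limit $\epsilon^{a}\nubar(\epsilon x^{-b})\to c\,x^{ab}$ together with dominated convergence upgrades the convergence to $\epsilon^{a}A(\epsilon)\to c\,\Sigma(ab)$ in $L^2$, settling case (2). In case \ref{item:H1}, the non-lattice assumption on $\nu$ makes the subordinator $\xi$ non-arithmetic, so after the substitution $w=x+b^{-1}\log\epsilon$ the Key Renewal Theorem applied to $dU$, followed by the change of variables $u=e^{w}$, produces $\epsilon^{1/b}\Exp{A(\epsilon)}\to m^{-1}\int_0^{\infty}\nubar(u^b)\,du$; the tail condition $\nubar(u)=o(u^{-a})$ with $a<1/b$ and the boundedness of $\varphi$ secure direct Riemann integrability of the transformed integrand.

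The remaining case \ref{item:H1} is the main obstacle, because the limit is deterministic and therefore requires the stronger statement $\mathrm{Var}(\epsilon^{1/b}A(\epsilon))\to 0$. I plan to establish this through a two-fragment (``many-to-two'') calculation: expanding
$$\Exp{A(\epsilon)^2}=2\int_0^{\infty}\!dt\,\Exp{\sum_{j}\nubar\bigl(\epsilon X_j(t)^{-b}\bigr)\,\Expcond{\int_t^{\infty}\sum_k\nubar\bigl(\epsilon X_k(s)^{-b}\bigr)ds}{\F_t}},$$
the branching property at time $t$ combined with the self-similar scaling of the homogeneous fragmentation turns the inner conditional expectation into $\sum_i h\bigl(\epsilon X_i(t)^{-b}\bigr)$ with $h(\eta):=\Exp{A(\eta)}$. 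Splitting the resulting double sum over $(i,j)$ into the diagonal part (handled directly by Lemma \ref{lem:xi}) and the off-diagonal part (handled by a two-tagged-fragment analysis at time $t$, using the branching property once more), and then inserting the renewal asymptotics for $h$ from the previous paragraph, I expect to obtain $\Exp{A(\epsilon)^2}\sim\bigl(\Exp{A(\epsilon)}\bigr)^2$, hence $\mathrm{Var}(\epsilon^{1/b}A(\epsilon))=o(1)$. The technical small-mass and second-moment estimates required to carry this out appear to be precisely what the author collects in Lemma \ref{lem:m2} and Lemma \ref{lem:small}, whose proofs are deferred to Section \ref{section:technical}. Combining the variance bound with the mean convergence and the martingale estimate $\Exp{M(\epsilon)^2}=\Exp{A(\epsilon)}=O(\epsilon^{-1/b})$ then completes case (1).
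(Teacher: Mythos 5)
Your proposal is correct and follows essentially the same route as the paper: reduction to the homogeneous case via the time change and Lemma \ref{lem:index}, the compensated martingale with $\Exp{M(\epsilon)^2}=\Exp{A(\epsilon)}$ (Lemmas \ref{lem:Mart}--\ref{lem:qv} and Corollary \ref{cor_NA}), the key renewal theorem for $\Exp{A(\epsilon)}$ under \ref{item:H1} (Lemma \ref{lem:m1}), a two-tagged-fragment second-moment estimate to kill the variance (the paper's Lemma \ref{lem:m2}, proved in Section \ref{section:technical}), and dominated convergence against $\Sigma(ab)$ under \ref{item:H2} (Lemma \ref{lem:general} and Corollary \ref{cor_H2}). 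The only cosmetic differences are that you organize the second-moment computation as a fixed-time diagonal/off-diagonal split rather than the paper's split at the random separation time of the two tagged fragments, and that Lemma \ref{lem:small} is not actually needed for this theorem (it serves Theorem \ref{thm:Brownian}).
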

\begin{rem} 
\begin{enumerate}
\item
Theorem \ref{lem:index} shows a phase transition when $b$ varies. If for $a>0$ and $c>0$,
\begin{equation}\nubar(u) = \nu(\s\in \Sd: \varphi(\s) >u) \sim c u^{-a}, \quad u\to 0^+,\end{equation} 
then the critical value of $b$ is $b_c = \frac{1}{a}$. In the sub-critical phase, the scaling limit is is a constant while in the super-critical phase the limit is a random variable. Notice that this phase transition is only possible when $\nu(\Sd)=\infty$. 
\item Theorem \ref{thm:large} does not cover the critical case, in which there exists $c>0$ such that $\nubar(u) \sim c u^{-\frac{1}{b}}$ as $u\to 0^+$. In the critical case, on the one hand by comparing with the sub-critical phase we see that $\lime \epsilon^{\frac{1}{b}+\delta} \Nld(\epsilon) = 0$ in $L^2(\mathbb P)$ for all $\delta> 0$, so $N(\epsilon)\in o(\epsilon^{-(\delta+ \frac{1}{b})})$; on the other hand, by comparing with the super-critical phase we have for all $\delta>0$ that in probability 
$$\liminf_{\epsilon \to 0} \epsilon^{\frac{1}{b}}\Nld(\epsilon) \geq c \int_{0}^{+\infty} \sum_{i=1}^{\infty} X^{(\alpha)}_i(t)^{\alpha+1- \delta}dt.$$
If the fragmentation is conservative, since it follows from Lemma \ref{lem:index} that 
$$\int_{0}^{+\infty} \sum_{i=1}^{\infty} X^{(\alpha)}_i(t)^{\alpha+1}dt =\int_{0}^{+\infty}1dt = +\infty,$$ then $\Nld(\epsilon) \not\in \bigO (\epsilon^{-\frac{1}{b}})$. However, we do not have a finer result. 
\item The functions $\psi'$ and $\psi''$ defined as in \eqref{eq:psi'} and \eqref{eq:psi''} do not have the form \eqref{eq:psi}, thus we cannot apply directly Theorem \ref{thm:large} to $N'(\epsilon)$ and $N''(\epsilon)$. We will show how to overcome this difficulty in Section \ref{section:Brownian}.
\end{enumerate}
\end{rem}

Before tackling the proof of Theorem \ref{thm:large}, let us look at an example of its application. It concerns {\it fragmentation trees} \cite{neininger2008fragtrees}. We consider a self-similar fragmentation $\X$ with non-lattice dislocation measure $\nu$ satisfying $\nu (\Sd) <\infty$, such that the fragmentation $\X$ has a discrete genealogical structure. Let us denote the {\it genealogical tree} by $\mathcal{U}:= \bigcup_{n=0}^{\infty} \mathbb{N}^n$, where $\mathbb{N}^0 = \{\emptyset\}$ by convention. Each $u\in \mathcal{U}$ is called an {\it individual}, we assign to each individual a fragment in the following way. The root $\emptyset$ represents the initial state and is marked by its mass $m_{\emptyset}=1$. Suppose that an individual $u \in \mathcal{U}$ stands for a fragment of mass $m_{u}>0$. Since $\nu (\Sd) <\infty$, this fragment lives for a strictly positive time before it splits. When it splits, it generates fragments of masses $(m_{(u, j)})_{j\in \N}$. Thus for $j\in \N$, the $j$-th child of $u$ is $(u, j)\in \mathcal{U}$ is the fragment of mass $m_{(u, j)}$, possibly zero. 
For $\epsilon>0$, let the {\it fragmentation tree} be the sub-tree of $\mathcal U$ consisting all nodes with mass greater $\epsilon$. Then the number of vertices in a fragmentation tree is the same as $\Nld(\epsilon)$, the number of $(\psi,\epsilon)$-large dislocations with $\psi: [0,1]\times\Sd \to [0,\infty)$ defined by $\psi(x, \s) = x$. Then $g(u)= \nu(\Sd)\ind{u<1}$ for $u> 0$. Thus \ref{item:H1} holds and it follows from Theorem \ref{thm:large} that 
\begin{equation} \lime \epsilon \Nld(\epsilon)= \frac{1}{m}\nu(\Sd)\cvL.\end{equation}
We refer to Theorem 1.3 in \cite{neininger2008fragtrees} for sharper results.

The rest of this section is devoted to the proof of Theorem \ref{thm:large}. We first point out that it suffices to consider homogeneous fragmentations. If Theorem \ref{thm:large} holds for the homogeneous fragmentations, noticing that the index changing transformation defined in Section \ref{section:index} preserves the number of $(\psi,\epsilon)$-large dislocations, then we can easily obtain the desired results for self-similar fragmentations with any index $\alpha\in \R$ by using this transformation and Lemma \ref{lem:index}. We left the details to the readers.

Hence we will focus on homogeneous fragmentations. We will show by Corollary \ref{cor_NA} below that it is equivalent to study $A(\epsilon)$ defined as in \eqref{eq:Ainf} below. Then we will study the asymptotic behavior of $A(\epsilon)$ as $\epsilon \to 0$ respectively when \ref{item:H1} holds or \ref{item:H2} holds, which finally proves Theorem \ref{thm:large}. 

\subsubsection{The compensated martingale}
Throughout the rest of Section \ref{section:large}, we consider a homogeneous fragmentation $\X = (X_i)_{i\in \N}$ with no erosion and dislocation measure $\nu$, and write $(\mathcal F_{t})_{t\geq 0}$ for its natural filtration.

A homogeneous fragmentation possesses a Poissonian structure which is described as follows. At every time $t>0$, there is at most one fragment that splits, we denote its index by $\kappa(t)$, and denote $\s(t)$ for the ratio between the masses of the ``children'' generated in this dislocation and their ``parent''. Then a dislocation is characterized by a triple $(t,\kappa(t), \s(t))\in [0,\infty)\times \N \times \Sd$. According to Theorem 9 in \cite{berestycki2002ranked}, the dislocations of $\X$ correspond to the atoms of a $(\mathcal F_{t})$-Poisson point process $(\kappa(t),\s(t))_{t\geq 0}$  in $\N \times \Sd$, with characteristic measure $\# \otimes \nu$, where $\#$ denotes the counting measure of $\N$. 
Using these notations, we express the number of $(\psi,\epsilon)$-large dislocations before time $t>0$ 
\begin{equation}
N_{t}(\epsilon) = \sum_{r\in(0,t] }\ind{\psi \left(X_{\kappa(r)}(r-),\s(r)\right)> \epsilon},
\end{equation} 
and the number of all $(\psi,\epsilon)$-large dislocations is $\Nld(\epsilon) = \lim_{t\to \infty} N_{t}(\epsilon)$. 

The Poissonian structure of the homogeneous fragmentation $\X$ permits us to introduce the compensated martingale. 
For $\epsilon>0$, define a function $f_{\epsilon} : [0,\infty) \to [0,\infty]$ by 
\begin{equation}\label{eq:f}
f_{\epsilon}(x):= \nu \left(\s \in \Sd: \psi(x,\s )> \epsilon \right), \quad x\in [0,\infty).
\end{equation}
Recall that $\psi(0, \cdot)\equiv 0$, thus $f_{\epsilon}(0) = 0$. Set 
 \begin{equation}\label{eq:Ainf}
\Ald (\epsilon) := \int_0^{\infty} \sum_{i=1}^{\infty} f_{\epsilon}(X_i(r)) dr. 
\end{equation}
If $\Exp{\Ald (\epsilon)}<\infty$, then it follows immediately from the compensation formula (see e.g. Section O.5 in \cite{bertoin1996Levy}) for the Poisson point process $(\kappa(t),\s(t))_{t\geq 0}$, that 
\begin{equation} M_t(\epsilon):= N_t(\epsilon)- \int_0^{t} \sum_{i=1}^{\infty} f_{\epsilon}(X_i(r)) dr, \quad t\geq 0,\end{equation}
is a uniformly integrable $(\F_t)_{t\geq 0}$-martingale. Further,
\begin{equation}M_t(\epsilon) \underset{t \to \infty}{\longrightarrow} \Nld(\epsilon)- \Ald(\epsilon) \quad \text{ a.s. and in } L^1(\mathbb{P}).\end{equation}
In particular, we have
\begin{lem}\label{lem:Mart}
If $\Exp{\Ald(\epsilon)}<\infty$, then $\Exp{ \Nld(\epsilon)}=\Exp{ \Ald(\epsilon)} <\infty$. 
\end{lem}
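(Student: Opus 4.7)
The lemma is immediate from the $L^1$-convergence $M_t(\epsilon) \to \Nld(\epsilon) - \Ald(\epsilon)$ that has just been recorded in the paragraph above the statement. My plan is to take expectations on both sides of this convergence. Since $(M_t(\epsilon))_{t \geq 0}$ is a martingale starting at $M_0(\epsilon) = 0$, we have $\Exp{M_t(\epsilon)} = 0$ for every $t \geq 0$; convergence in $L^1$ then forces $\Exp{\Nld(\epsilon) - \Ald(\epsilon)} = 0$. Under the standing assumption $\Exp{\Ald(\epsilon)} < \infty$, the non-negative random variable $\Ald(\epsilon)$ lies in $L^1$; combined with $\Nld(\epsilon) - \Ald(\epsilon) \in L^1$ this yields $\Nld(\epsilon) \in L^1$, and linearity of expectation then gives $\Exp{\Nld(\epsilon)} = \Exp{\Ald(\epsilon)}$, which is finite.

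An alternative, slightly more self-contained route bypasses the $L^1$-convergence. I would instead take expectations in the martingale identity at a fixed $t$, obtaining
\[ \Exp{\Nld_t(\epsilon)} = \Exp{\int_0^t \sum_{i=1}^{\infty} f_\epsilon(X_i(r)) \, dr}, \]
and then let $t \to \infty$. The right-hand side is non-decreasing in $t$ with non-negative integrand, so monotone convergence yields the limit $\Exp{\Ald(\epsilon)}$; likewise $\Nld_t(\epsilon) \uparrow \Nld(\epsilon)$, so another application of monotone convergence gives $\Exp{\Nld_t(\epsilon)} \uparrow \Exp{\Nld(\epsilon)}$. Combining these two monotone limits produces the desired equality. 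I do not expect any real obstacle here: the compensation formula for the Poisson point process $(\kappa(t), \s(t))_{t \geq 0}$ has already done the heavy lifting, and the lemma itself merely records the expectation-level consequence.
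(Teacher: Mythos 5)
Your first argument is precisely the paper's (implicit) proof: the lemma is recorded there as an immediate consequence of the uniform integrability and $L^1$-convergence of the compensated martingale $M_t(\epsilon)$, and taking expectations (using $\Exp{M_t(\epsilon)}=0$) is all that is required. Your alternative route, applying the compensation formula at fixed $t$ and letting $t\to\infty$ by monotone convergence on both sides, is only a mild variant of the same argument and is equally valid.
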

Further, by looking at the quadratic variation of the martingale $(M_t(\epsilon))_{t\geq 0}$, we have the following lemma. 
\begin{lem}\label{lem:qv}
If $\Exp{\Ald(\epsilon)}<\infty$, then 
\begin{equation}\Exp{ \left( \Nld(\epsilon) - \Ald(\epsilon) \right)^2} = \Exp{\Ald(\epsilon)}<\infty.\end{equation}
\end{lem}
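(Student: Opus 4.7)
The plan is to read off the $L^2$ norm of the compensated martingale $M_t(\epsilon)$ from its predictable quadratic variation and then pass to the limit $t\to\infty$.

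First I would observe that $N_t(\epsilon)$ is an integer-valued counting process that increases by exactly one unit at each $(\psi,\epsilon)$-large dislocation; equivalently, it is the sum over the atoms $(\kappa(r),\mathbf{s}(r))$ of the underlying Poisson point process of the indicator $\mathbf{1}\{\psi(X_{\kappa(r)}(r-),\mathbf{s}(r))>\epsilon\}$, whose compensator with respect to $(\mathcal{F}_t)$ is $\int_0^t \sum_i f_\epsilon(X_i(r))\,dr$ by the very definition of $f_\epsilon$ in \eqref{eq:f}. Since $\mathbb{E}[A(\epsilon)]<\infty$ by hypothesis, Lemma \ref{lem:Mart} already tells us $M_t(\epsilon)$ is a uniformly integrable martingale; the point now is to upgrade this to $L^2$.

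The key step is the Itô isometry for compensated Poisson integrals (equivalently, the compensation formula applied to the square of the integrand). Because $M_t(\epsilon)$ jumps only when $N_t(\epsilon)$ does, and every such jump has height exactly $1$, the squared jumps coincide with the jumps themselves. Hence the predictable quadratic variation is
\begin{equation}
\langle M(\epsilon), M(\epsilon)\rangle_t = \int_0^t \sum_{i=1}^\infty f_\epsilon(X_i(r))\,dr,
\end{equation}
and so $\mathbb{E}[M_t(\epsilon)^2] = \mathbb{E}\bigl[\int_0^t \sum_i f_\epsilon(X_i(r))\,dr\bigr] \leq \mathbb{E}[A(\epsilon)] < \infty$. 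This shows $(M_t(\epsilon))_{t\geq 0}$ is bounded in $L^2$.

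Finally I would invoke the $L^2$-martingale convergence theorem: bounded in $L^2$ plus the a.s.\ convergence $M_t(\epsilon)\to N(\epsilon)-A(\epsilon)$ from Lemma \ref{lem:Mart} forces convergence in $L^2$ as well, so
\begin{equation}
\mathbb{E}\bigl[(N(\epsilon)-A(\epsilon))^2\bigr] = \lim_{t\to\infty}\mathbb{E}[M_t(\epsilon)^2] = \lim_{t\to\infty}\mathbb{E}\Bigl[\int_0^t \sum_{i=1}^\infty f_\epsilon(X_i(r))\,dr\Bigr] = \mathbb{E}[A(\epsilon)]
\end{equation}
by monotone convergence on the right-hand side. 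The only subtlety — and hence the main point to be checked carefully — is the identification of the predictable quadratic variation of $M_t(\epsilon)$, which amounts to writing the compensation formula directly for the functional $(r,\kappa,\mathbf{s})\mapsto \mathbf{1}\{\psi(X_\kappa(r-),\mathbf{s})>\epsilon\}^2 = \mathbf{1}\{\psi(X_\kappa(r-),\mathbf{s})>\epsilon\}$; no truncation argument is needed because we already have $\mathbb{E}[A(\epsilon)]<\infty$.
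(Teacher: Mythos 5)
Your argument is correct, and it reaches the same identity through a slightly different technical route than the paper. The paper's main work is establishing that $(M_t(\epsilon))_{t\geq 0}$ is bounded in $\LP$: it first treats the bounded case via Lemma 36.2 in Chapter VI of Rogers--Williams (optional quadratic variation $[M]_t=\sum_{r\leq t}(\Delta M_r)^2=N_t(\epsilon)$ for an $L^2$-bounded martingale of integrable variation), and then bootstraps the general case by stopping at $T_n=\inf\{t:|M_t|>n\}$ and applying Fatou's lemma. You instead go through the predictable bracket / It\^o isometry for compensated Poisson integrals: since the integrand is an indicator, its square equals itself, so the square-integrability hypothesis of the isometry is exactly $\Exp{\Ald(\epsilon)}<\infty$, giving $\Exp{M_t(\epsilon)^2}=\Exp{\int_0^t\sum_i f_\epsilon(X_i(r))\,dr}\leq\Exp{\Ald(\epsilon)}$ with no explicit truncation; the localization is absorbed into the standard $L^2$ theory of Poisson stochastic integrals (or, if you prefer the bracket formulation, into the observation that $M$ has jumps bounded by $1$, hence is locally square-integrable, so $\langle M\rangle_t=\int_0^t\sum_i f_\epsilon(X_i(r))\,dr$ is the compensator of $[M]_t=N_t(\epsilon)$). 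The concluding step is the same in both proofs: $L^2$-martingale convergence identifies $\Exp{(\Nld(\epsilon)-\Ald(\epsilon))^2}$ with $\lim_t\Exp{M_t(\epsilon)^2}$, and monotone convergence gives $\Exp{\Ald(\epsilon)}$ on the right. The only thing you should add for completeness is a precise reference for the isometry (or the identification of the pathwise compensated sum with the $L^2$ Poisson integral for integrands in $L^1\cap L^2$ of the intensity), since that citation plays the role that the stopping-time argument plays in the paper; with that, your proof buys a shorter and arguably more transparent derivation, while the paper's version is self-contained modulo a general martingale lemma.
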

\begin{proof}
Since $\epsilon$ is fixed, we do not indicate the dependence on $\epsilon$ for simplicity. 
If $\Exp{\Ald}<\infty$, then by Lemma \ref{lem:Mart}, $\Exp{\Nld} = \Exp{\Ald}<\infty$.
Noticing that $N_t$ and $\int_0^{t} \sum_{i=1}^{\infty} f_{\epsilon}(X_i(r)) dr$ are both increasing with respect to $t$, we have
\begin{equation}\Exp{\int_0^{\infty} |dM_t|} \leq \Exp{\Nld} + \Exp{\Ald} < \infty,\end{equation} 
i.e. the martingale $(M_t)_{t\geq 0}$ is of integrable variation. 

We first suppose that the martingale $(M_t)_{t\geq 0}$ is bounded in $\LP$. According to Lemma 36.2 in Chapter VI \cite{rogers2000diffusions}, since the martingale $(M_t)_{t\geq 0}$ is bounded in $L^2(\mathbb{P})$ and has integrable variation, its quadratic variation process is
\begin{equation}[M]_t = \sum_{r\in(0,t]} (M_r -M_{r-})^2 = \sum_{r\in(0,t] } \ind{\psi \left(X_{\kappa(r)}(r-),\s(r)\right)> \epsilon}^2 = N_t, \quad t\geq 0,\end{equation}
and $(M_t^2 - [M]_t)_{t\geq 0}$ is a uniformly integrable martingale. Thus 
\begin{equation}\Exp{(N-A)^2}= \lim_{t\to \infty} \Exp{M_{t}^2}  =  \lim_{t\to \infty} \Exp{[M]_{t}} = \Exp{N} =\Exp{A}, \end{equation} 
where the last equality follows from Lemma \ref{lem:Mart}. 


It thus remains to prove that $(M_t)_{t\geq 0}$ is indeed bounded in $\LP$. Let us consider a sequence of stopping times $(T_n)_{n\in \N}$ with $T_n:= \inf \{ t\geq 0: |M_t|>n\}$ (by convention $\inf \emptyset = +\infty$). 
 For every fixed $n\in \N$, the martingale $(M_{T_n \wedge t})_{t\geq 0}$ is bounded, thus it follows from the arguments above that $\Exp{\ M^2_{T_n\wedge t}} = \Exp{ N_{T_n \wedge t} }$ for every $t\geq 0$. Then we have by Fatou's lemma that for every $t\geq 0$
$$ \Exp{M^2_t} \leq \liminf_{n\to \infty} \Exp{M^2_{T_n\wedge t}} =\liminf_{n\to \infty}\Exp{ N_{T_n \wedge t} } \leq \Exp{N} =\Exp{A}.$$
So we have $\sup_{t\geq 0}\Exp{M^2_t}\leq \Exp{A}<\infty$, which completes the proof. 
\end{proof}

\begin{cor}\label{cor_NA}
For $\lambda>0$, if $\epsilon^{\lambda}\Ald(\epsilon)$ converges in $\LP$ as $\epsilon \to 0$, then $\epsilon^{\lambda}\Nld(\epsilon)$ converges in $\LP$ to the same limit as $\epsilon \to 0$.
\end{cor}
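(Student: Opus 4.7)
The plan is to deduce the corollary from Lemma \ref{lem:qv} by showing that $\epsilon^{\lambda}(N(\epsilon)-A(\epsilon))\to 0$ in $L^{2}(\mathbb{P})$ and then invoking the triangle inequality in $L^{2}(\mathbb{P})$. The identity supplied by Lemma \ref{lem:qv} reduces everything to controlling the size of $\mathbb{E}[A(\epsilon)]$.

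First, since $\epsilon^{\lambda}A(\epsilon)$ converges in $L^{2}(\mathbb{P})$ as $\epsilon\to 0$, it is in particular bounded in $L^{2}(\mathbb{P})$, hence bounded in $L^{1}(\mathbb{P})$. Thus there exist $\epsilon_{0}>0$ and a constant $C<\infty$ such that
\begin{equation}
\epsilon^{\lambda}\,\mathbb{E}[A(\epsilon)] \leq C, \qquad 0<\epsilon<\epsilon_{0}.
\end{equation}
In particular $\mathbb{E}[A(\epsilon)]<\infty$ for every $\epsilon\in(0,\epsilon_{0})$, so the hypothesis of Lemma \ref{lem:qv} is met.

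Applying Lemma \ref{lem:qv} and multiplying by $\epsilon^{2\lambda}$, we obtain, for every $\epsilon\in(0,\epsilon_{0})$,
\begin{equation}
\mathbb{E}\!\left[\bigl(\epsilon^{\lambda}N(\epsilon)-\epsilon^{\lambda}A(\epsilon)\bigr)^{2}\right]
= \epsilon^{2\lambda}\,\mathbb{E}[A(\epsilon)] \leq C\,\epsilon^{\lambda} \xrightarrow[\epsilon\to 0]{} 0,
\end{equation}
so that $\epsilon^{\lambda}(N(\epsilon)-A(\epsilon))\to 0$ in $L^{2}(\mathbb{P})$. Writing $\epsilon^{\lambda}N(\epsilon)=\epsilon^{\lambda}A(\epsilon)+\epsilon^{\lambda}(N(\epsilon)-A(\epsilon))$ and applying the triangle inequality in $L^{2}(\mathbb{P})$ then shows that $\epsilon^{\lambda}N(\epsilon)$ converges in $L^{2}(\mathbb{P})$ to the same limit as $\epsilon^{\lambda}A(\epsilon)$.

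There is no real obstacle here: the only mildly delicate point is justifying that $\mathbb{E}[A(\epsilon)]$ is finite and grows no faster than $\epsilon^{-\lambda}$, which follows for free from the $L^{2}$ (hence $L^{1}$) boundedness of $\epsilon^{\lambda}A(\epsilon)$. All the substantive work has been absorbed into the martingale identity of Lemma \ref{lem:qv}.
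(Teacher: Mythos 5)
Your proof is correct and follows essentially the same route as the paper: both rely on Lemma \ref{lem:qv} to identify $\Exp{(\epsilon^{\lambda}N(\epsilon)-\epsilon^{\lambda}A(\epsilon))^2}=\epsilon^{2\lambda}\Exp{A(\epsilon)}$ and then conclude by an $L^2$ triangle-type estimate, with the observation that $\epsilon^{\lambda}\Exp{A(\epsilon)}$ is bounded (which the paper leaves implicit and you spell out). No gap.
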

\begin{proof}
Set 
\begin{equation}A_0: = \lime \epsilon^{\lambda}\Ald(\epsilon)\cvL.\end{equation}
By Lemma \ref{lem:qv}, we have
\begin{align}
\Exp{(\epsilon^{\lambda}\Nld(\epsilon) - A_0)^2} &\leq 2 \Exp{(\epsilon^{\lambda}\Nld(\epsilon) -\epsilon^{\lambda} \Ald(\epsilon))^2} + 2 \Exp{(\epsilon^{\lambda}\Ald(\epsilon) - A_0)^2} \\
&= 2\epsilon^{2\lambda}\Exp{\Ald(\epsilon)} + 2 \Exp{(\epsilon^{\lambda}\Ald(\epsilon) - A_0)^2}.
\end{align}
Then the claim holds since 
\begin{equation}\lim_{\epsilon \to 0}2\epsilon^{2\lambda}\Exp{\Ald(\epsilon)} + 2 \Exp{(\epsilon^{\lambda}\Ald(\epsilon) - A_0)^2}=0.\end{equation}
\end{proof}
By Corollary \ref{cor_NA}, to study the asymptotic behavior of $\Nld(\epsilon)$ as $\epsilon \to 0$, it suffices to study the asymptotic behavior of $\Ald(\epsilon)$ as $\epsilon \to 0$. 

\subsubsection{The case when \ref{item:H1} holds}\label{section:Alim}
Now we study the asymptotic behavior of $\Ald(\epsilon)$ as $\epsilon \to 0$. Suppose that $\psi$ has the form \eqref{eq:psi}, then by the definitions of $f_{\epsilon}$ and $\nubar$ in \eqref{eq:f} and \eqref{eq:nubar}, we can rewrite $\Ald(\epsilon)$ by 
\begin{equation}\label{eq:psi1}
\Ald(\epsilon)  =\int_0^{\infty} \sum_{i=1}^{\infty} f_1(\epsilon^{-\frac{1}{b}} X_i(t)) dt, 
\end{equation}
and we have
\begin{equation}\label{eq:psi2}
f_1(x) = g (x^{-b}), \quad x>0.
\end{equation}
We first suppose that \ref{item:H1} holds. Let us explain briefly the motivation of considering \ref{item:H1}. By \eqref{eq:tagged}, we have
\begin{equation}\label{eq:lem:m1}
  \Exp{\Ald(\epsilon)} =\epsilon^{-\frac{1}{b}} \int_0^{\infty} f_1 \left(\e^{(-\frac{1}{b}\log \epsilon-x)}\right)\e^{ - (-\frac{1}{b}\log \epsilon -x)} dU(x).
\end{equation}
Recall that $dU$ is the potential measure of the subordinator $\xi$ as in Lemma \ref{lem:xi}, we can study the limit of the right hand side when $\epsilon \to 0$ with the help of the renewal theorem for subordinators (see for example Proposition 1.6 in \cite{bertoin1999subordinators}). To use the renewal theorem, we need the function defined by $f_1(\e^{x})\e^{-x} = g(e^{-bx})e^x$ to be directly Riemann integrable on $\R$. Hence it is natural to consider condition \ref{item:H1}, which ensures this integrability. In order to use the renewal theorem, we also suppose that $\nu$ is non-lattice in the sense of Definition \ref{defn:lattice}.

\begin{lem}\label{lem:m1}
Suppose that $\nu$ is non-lattice. If $\psi$ has the form \eqref{eq:psi} and \ref{item:H1} holds, then 
\begin{equation}
 \lim_{\epsilon\to 0} \epsilon^{\frac{1}{b}} \Exp{\Ald(\epsilon)} = \frac{1}{m} \int_0^{\infty} g(u^b)du,
\end{equation}
 where $m$ is defined as in \eqref{eq:m}, and by convention $\frac{1}{\infty}=0$.
\end{lem}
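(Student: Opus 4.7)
The plan is to compute $\Exp{\Ald(\epsilon)}$ via the tagged-fragment identity \eqref{eq:tagged} and then extract its asymptotics by applying the renewal theorem to the subordinator $\xi$ of Lemma \ref{lem:xi}. The scaling relation $f_\epsilon(x) = f_1(\epsilon^{-1/b}x)$ coming from \eqref{eq:psi1}--\eqref{eq:psi2}, combined with \eqref{eq:tagged}, yields
\begin{equation}
\Exp{\Ald(\epsilon)} = \int_0^{\infty} e^{x} f_1(\epsilon^{-1/b}e^{-x}) dU(x).
\end{equation}
Setting $\delta := -\tfrac{1}{b}\log\epsilon$ and substituting $y = x - \delta$ gives
\begin{equation}
\epsilon^{1/b}\Exp{\Ald(\epsilon)} = \int_{-\delta}^{\infty} h(y) dU(y+\delta), \qquad h(y) := e^y f_1(e^{-y}) = e^y g(e^{by}).
\end{equation}

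The heart of the proof is to verify that $h$ is directly Riemann integrable on $\mathbb{R}$. The tail $g$ is non-increasing and right-continuous, and since $\varphi$ is bounded, $g(u)=0$ for all $u$ beyond some threshold, so $h$ vanishes on some half-line $[y_0,\infty)$. On any bounded interval, $h$ is the product of the continuous function $e^y$ and the monotone right-continuous function $g(e^{by})$, hence bounded with a countable discontinuity set, and therefore Riemann integrable. As $y\to-\infty$, hypothesis \ref{item:H1} yields $g(e^{by})=o(e^{-aby})$, whence $h(y)=o(e^{(1-ab)y})$ with $1-ab>0$; this provides a geometrically summable envelope on the integer blocks $[n,n+1)$ for $n<0$, completing the verification of direct Riemann integrability. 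Because $\nu$ is non-lattice in the sense of Definition \ref{defn:lattice}, the subordinator $\xi$ is non-lattice, so Blackwell's renewal theorem for subordinators (see e.g.\ Proposition~1.6 of \cite{bertoin1999subordinators}, with the convention $1/\infty=0$ that also covers the killed/dissipative case) gives
\begin{equation}
\lim_{\delta\to\infty} \int_{-\delta}^{\infty} h(y) dU(y+\delta) = \frac{1}{m}\int_{-\infty}^{\infty} h(y) dy.
\end{equation}

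Finally, the substitution $v=e^{by}$ rewrites $\int_{-\infty}^{\infty} h(y) dy$ as $\tfrac{1}{b}\int_0^{\infty} g(v) v^{1/b - 1}dv$, and a further substitution $u=v^{1/b}$ collapses this to $\int_0^{\infty} g(u^b) du$, matching the claim. The expected obstacle is the tail control of $h$ as $y\to-\infty$: this is precisely where the strict inequality $a<1/b$ in \ref{item:H1} is essential, since in the borderline case $ab=1$ the envelope would no longer be summable. Once direct Riemann integrability is established, the tagged-fragment identity, the two elementary changes of variable, and the renewal theorem combine mechanically to yield the stated limit.
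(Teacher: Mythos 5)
Your proposal is correct and follows essentially the same route as the paper: rewrite $\Exp{\Ald(\epsilon)}$ via the tagged-fragment identity \eqref{eq:tagged} as a renewal-type integral against the potential measure $dU$, verify direct Riemann integrability of $y\mapsto e^y g(e^{by})$ using the boundedness of $\varphi$ (support bounded above) and the strict inequality $ab<1$ from \ref{item:H1} (geometric tail), and then apply the key renewal theorem for the non-lattice subordinator $\xi$ followed by the same changes of variables. The only cosmetic point is that the statement you invoke is the integral (key) renewal theorem rather than Blackwell's theorem proper, a distinction the paper itself flags in a footnote; your use of it is otherwise exactly the paper's argument.
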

\begin{proof} 
If $\psi$ has the form \eqref{eq:psi} and \ref{item:H1} holds, then there exists $\bar{c}>0$ such that 
\begin{equation}\label{eq:cbarf}
f_1(x) = g(x^{-b}) \leq \bar{c} \ind{x\geq |\varphi|^{-\frac{1}{b}} } x^{ab}, \quad x>0, 
\end{equation}
where $|\varphi|$ stands for the $L^{\infty}$ norm of $\varphi$.
It follows that $\tilde{f} \colon [0,\infty) \to [0,\infty)$ defined by
\begin{equation}\tilde{f}(y):=f_1(\e^{y}|\varphi|^{-\frac{1}{b}} )\e^{-y}|\varphi|^{\frac{1}{b}} \leq \bar{c}|\varphi|^{\frac{1}{b} - a}  \e^{(ab-1)y},\quad y\in [0,\infty),\end{equation} 
is directly Riemann integrable on $[0,\infty)$. Observing that $f_1(x)=0$ for all $x< |\varphi|^{-\frac{1}{b}}$, we write \eqref{eq:lem:m1} in terms of $\tilde{f}$ and obtain that
$$\Exp{A(\epsilon)} =\epsilon^{-\frac{1}{b}} \int_0^{\frac{1}{b}\log |\varphi| -\frac{1}{b}\log \epsilon} \tilde{f} \left(\frac{1}{b}\log |\varphi| -\frac{1}{b}\log \epsilon-y  \right) dU(y). $$
As $-\frac{1}{b}\log\epsilon \to +\infty$ when $\epsilon \to 0$, by the renewal theorem (Proposition 1.6 in \cite{bertoin1999subordinators}) \footnote{More precisely, we use the integral version of the renewal theorem, also known as the key renewal theorem, which can be derived from Proposition 1.6 in \cite{bertoin1999subordinators} by using the same arguments as the proof of Theorem 4.4.5 in \cite{Durrett}.} for subordinator $\xi$, we have
\begin{linenomath}\begin{align}
   \lim_{\epsilon \to 0}\epsilon^{\frac{1}{b}}\Exp{A(\epsilon)} & = \frac{1}{\Exp{\xi(1)}} \int_{0}^{+\infty} \tilde{f} (y)dy  = \frac{1}{\Exp{\xi(1)}} \int_{-\infty}^{+\infty} \e^{-x} f_1 ( \e^{x})dx = \frac{1}{m}\int_0^{+\infty}g(u^b)du,
\end{align}\end{linenomath}
where we have changed variables $x= y -\frac{1}{b} \log |\varphi|$ and $u = \e^{-x}$ to get the second equality and the third equality respectively. 
 \end{proof}

Further we can prove the following result.  
\begin{lem}\label{lem:m2}
Suppose that $\nu$ is non-lattice. If $\psi$ has the form \eqref{eq:psi} and \ref{item:H1} holds, then we have
\begin{equation}\lim_{\epsilon \to 0}\epsilon^{\frac{2}{b}} \Exp{\Ald(\epsilon)^2}= \left( \frac{1}{m} \int_0^{\infty} g(u^b)du \right)^2.\end{equation}
\end{lem}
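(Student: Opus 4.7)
The plan is to extend the tagged-fragment computation of Lemma \ref{lem:m1} to the second moment by introducing two independent uniform random variables $V_1,V_2$ on $(0,1)$, independent of the fragmentation; writing $\chi_k(t)$ for the length of the fragment containing $V_k$ at time $t$, one has (with the convention $0/0=0$)
\[
\Exp{\Ald(\epsilon)^2}=\Exp{\int_0^\infty\!\int_0^\infty \frac{f_\epsilon(\chi_1(r))\,f_\epsilon(\chi_2(s))}{\chi_1(r)\,\chi_2(s)}\,dr\,ds},
\]
where the expectation averages over $V_1,V_2$ as well. The key geometric quantity is the separation time $\tau:=\inf\{t\ge 0:V_1,V_2\text{ lie in distinct fragments at time }t\}$. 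Conditioning on $\F_\infty$ and $V_1$ gives the identity $\Probcond{\tau>t}{\F_\infty,V_1}=\chi_1(t)$, and the branching and scaling properties of the homogeneous fragmentation imply that, with $Y_k:=\chi_k(\tau)$, the rescaled post-$\tau$ processes $(\chi_k(\tau+u)/Y_k)_{u\ge 0}$ for $k=1,2$ are independent standard tagged-fragment processes, independent of $\F_\tau$; in particular $Y_1,Y_2>0$ almost surely on $\{\tau<\infty\}$, and $\Prob{\tau=\infty}=0$ since $\Prob{\tau>t}=\Exp{\sum_i X_i(t)^2}\to 0$.

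I split the double integral into the four regions determined by $\{r\lessgtr\tau\}\times\{s\lessgtr\tau\}$ and treat each separately. The main contribution comes from $\{r\ge\tau,\,s\ge\tau\}$: after substituting $r=\tau+u$, $s=\tau+v$, the integrand factorizes into two independent pieces, and a conditional application of Lemma \ref{lem:xi} to each factor, using $f_\epsilon(Yx)=f_{\epsilon/Y^b}(x)$, shows that this contribution equals
\[
\Exp{\frac{\Gamma_\epsilon(Y_1)}{Y_1}\cdot\frac{\Gamma_\epsilon(Y_2)}{Y_2}}=\epsilon^{-2/b}\,\Exp{L_{\epsilon/Y_1^b}\,L_{\epsilon/Y_2^b}},
\]
where $\Gamma_\epsilon(x):=\Exp{\Ald(\epsilon/x^b)}$ and $L_\beta:=\beta^{1/b}\Exp{\Ald(\beta)}$. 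By Lemma \ref{lem:m1} one has $L_\beta\to L:=(1/m)\int_0^\infty g(u^b)\,du$ as $\beta\to 0^+$; and $L_\beta$ is uniformly bounded on $(0,\infty)$ by some constant $\bar L$, since it vanishes for $\beta\ge|\varphi|$ (as $g$ has compact support), is bounded on compact subintervals of $(0,|\varphi|)$, and has the finite limit $L$ at $0$. Dominated convergence then shows that this main piece is asymptotic to $L^2\epsilon^{-2/b}$.

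The three remaining regions contribute at order $o(\epsilon^{-2/b})$. The crucial auxiliary bound, valid under \ref{item:H1} via $g(u)\le Cu^{-a}$, is $J(\beta):=\int_0^\infty g(\beta e^{bx})\,dU(x)\le C\beta^{-a}/\Phi(ab)=O(\beta^{-a})=o(\beta^{-1/b})$. For the mixed region $\{r<\tau,\,s\ge\tau\}$, the post-$\tau$ factor gives $\Gamma_\epsilon(Y_2)/Y_2\le \bar L\epsilon^{-1/b}$, while the pre-$\tau$ factor, by the $\tau$-identity above, has expectation $\Exp{\int_0^\infty f_\epsilon(\chi_1(r))\,dr}=J(\epsilon)$, so this region contributes $O(\epsilon^{-a-1/b})$, and so does the symmetric region. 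For the pre-separation region $\{r,s<\tau\}$, expanding the square and combining the Markov property of $\xi=-\log\chi_1$ with the $\tau$-identity rewrites the expectation as $2\,\Exp{\int_0^\infty \chi_1(r)^{-1}f_\epsilon(\chi_1(r))\,J(\epsilon/\chi_1(r)^b)\,dr}$; plugging in $J(\epsilon/\chi_1(r)^b)\le C'\epsilon^{-a}\chi_1(r)^{ab}$ and evaluating via Lemma \ref{lem:xi} produces an integral against $dU$ that, after handling the support of $g$ with sub-cases depending on the sign of $1-2ab$, is $o(\epsilon^{-2/b})$.

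The main obstacle is the renewal-theoretic bookkeeping in the pre-separation estimate, where the crude Laplace-transform bound on $\int e^{(1-2ab)x}\,dU(x)$ diverges for $a\le 1/(2b)$; the remedy is to exploit the compact support of $g$ together with the sublinear growth $U(x)\sim x/m$ from the renewal theorem to extract an $O(\epsilon^{-1/b}\log(1/\epsilon))$ bound in that regime. The remaining ingredients, namely the uniform boundedness of $L_\beta$ and the dominated-convergence step in the main piece, are essentially immediate consequences of Lemma \ref{lem:m1}, the branching and scaling properties, and the compact support of $g$.
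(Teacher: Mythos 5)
Your proposal is correct and follows essentially the same route as the paper's proof: two independently tagged fragments, the separation time ($T$ in the paper, your $\tau$), the decomposition into pre-/post-separation pieces, the post-separation product yielding the limit via Lemma \ref{lem:m1} together with the uniform bound $\epsilon^{\frac{1}{b}}\Exp{\Ald(\epsilon)}\le \bar{c}_A$ and dominated convergence, and the remaining pieces shown to be $o(\epsilon^{-2/b})$ through the tagged-fragment identity $\Exp{\int_0^\infty f_\epsilon(\chi(t))\,dt}=O(\epsilon^{-a})$. The only divergence is in the pre-separation term, where the paper sidesteps your case analysis on the sign of $1-2ab$ (and the renewal-function logarithmic estimate) by bounding the future factor uniformly by $\bar{c}_A\epsilon^{-1/b}$ and reducing that term, like the cross term, to $\epsilon^{\frac{1}{b}}\Exp{S}\to 0$.
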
 
We postpone the proof of Lemma \ref{lem:m2} to Section \ref{section:technical}. Our arguments proceed in a similar way as in the proof of Lemma 5 in \cite{bertoin2005energy}.

Now we are able to prove Theorem \ref{thm:large} for the case when \ref{item:H1} holds. 
\begin{proof}[Proof of Theorem \ref{thm:large}: when \ref{item:H1} holds.]
By Lemma \ref{lem:m1} and Lemma \ref{lem:m2}, we have
\begin{equation}\lim_{\epsilon\to 0} \Exp{ \left( \epsilon^{\frac{1}{b}}\Ald(\epsilon)  -  \frac{1}{m} \int_0^{\infty} g(u^b)du \right)^2} =0 \end{equation}
then the claim follows from Corollary \ref{cor_NA}. 
\end{proof}
In the same spirit as Lemma \ref{lem:m2}, we introduce the following lemma for future use in Section \ref{section:Brownian}. We postpone its proof to Section \ref{section:technical}.
\begin{lem}\label{lem:small}
Consider the $(\psi,\epsilon)$-large dislocations of fragments of masses greater than $\frac{1}{2}$, and denote their number by $\bar{N}(\epsilon)$. If $\psi$ has the form \eqref{eq:psi} and \ref{item:H1} holds, then 
\begin{align}
\lime \epsilon^{\frac{1}{b}} \bar{N}(\epsilon) = 0\cvL.
\end{align}
\end{lem}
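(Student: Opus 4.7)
The plan is to mirror the compensated-martingale approach from Corollary \ref{cor_NA}, restricting every object to those dislocations coming from a fragment of mass exceeding $1/2$. I would begin by introducing the restricted compensator
\begin{equation}
\bar{\Ald}(\epsilon) := \int_0^{\infty} \sum_{i=1}^{\infty} f_{\epsilon}(X_i(r))\, \ind{X_i(r)>1/2}\,dr.
\end{equation}
Since the indicator $\ind{X_i(r-)>1/2}$ is predictable, the compensation formula used in Lemmas \ref{lem:Mart}--\ref{lem:qv} still yields that $\bar{\Nld}(\epsilon) - \bar{\Ald}(\epsilon)$ is the terminal value of a square-integrable martingale, with $\Exp{(\bar{\Nld}(\epsilon) - \bar{\Ald}(\epsilon))^2} = \Exp{\bar{\Ald}(\epsilon)}$. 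By the inequality $\Exp{(\epsilon^{1/b}\bar{\Nld}(\epsilon))^2} \leq 2\epsilon^{2/b}\Exp{\bar{\Ald}(\epsilon)} + 2\Exp{(\epsilon^{1/b}\bar{\Ald}(\epsilon))^2}$ used in the proof of Corollary \ref{cor_NA}, it therefore suffices to show that both $\epsilon^{1/b}\bar{\Ald}(\epsilon)\to 0$ in $L^2(\mathbb P)$ and $\epsilon^{2/b}\Exp{\bar{\Ald}(\epsilon)}\to 0$ as $\epsilon\to 0$.

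To establish these, I would first recycle the pointwise estimate from the proof of Lemma \ref{lem:m1}: under \ref{item:H1}, \eqref{eq:cbarf} furnishes a constant $\bar c>0$ with $f_1(x) \leq \bar c\, x^{ab}$, and a change of scale yields $f_{\epsilon}(x) = f_1(\epsilon^{-1/b}x) \leq \bar c\, \epsilon^{-a}\, x^{ab}$ for every $x\geq 0$. The key trick is then to convert the indicator $\ind{X_i(r)>1/2}$ into a power improvement: because $a<1/b$ implies $ab<1<2$, on the event $\{X_i(r)>1/2\}$ one has $X_i(r)^{ab-2}\leq 2^{2-ab}$, so $X_i(r)^{ab}\, \ind{X_i(r)>1/2}\leq 2^{2-ab}\,X_i(r)^{2}$. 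Combining both bounds and recalling the quantity $\Sigma(2)$ from \eqref{eq:Sigma} in Remark \ref{rem_Sigma} yields the deterministic domination
\begin{equation}
\bar{\Ald}(\epsilon) \leq \bar c\, 2^{2-ab}\,\epsilon^{-a}\, \Sigma(2).
\end{equation}

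Finally, Remark \ref{rem_Sigma} tells us that $\Sigma(2)$ has finite moments of every order. Writing $C := \bar c\,2^{2-ab}$, the preceding inequality gives
\begin{equation}
\Exp{(\epsilon^{1/b}\bar{\Ald}(\epsilon))^2} \leq C^2\,\epsilon^{2(1/b-a)}\,\Exp{\Sigma(2)^2}, \qquad \epsilon^{2/b}\Exp{\bar{\Ald}(\epsilon)} \leq C\,\epsilon^{2/b-a}\,\Exp{\Sigma(2)},
\end{equation}
and both right-hand sides vanish as $\epsilon\to 0$, since $a<1/b$ makes the two exponents $2(1/b-a)$ and $2/b-a$ strictly positive. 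This will conclude the proof. I do not foresee any serious obstacle; the only point requiring minor care is verifying that the compensation identity and the quadratic-variation argument of Lemma \ref{lem:qv} carry over unchanged when the compensator is multiplied by the predictable indicator $\ind{X_i(r-)>1/2}$, which is immediate from the corresponding proof.
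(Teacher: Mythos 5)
Your proof is correct, but it takes a genuinely different route from the paper. The paper's own argument never leaves the tagged-fragment framework: it sets $\bar{\psi}(x,\s)=\psi(x,\s)\ind{x>\frac{1}{2}}$, notes that Lemmas \ref{lem:Mart}--\ref{lem:qv} and Corollary \ref{cor_NA} apply to this new $\psi$ (no need to re-derive the compensation formula with a predictable indicator, which is the only slightly awkward point in your write-up -- the cleaner justification is simply that $\bar N(\epsilon)$ counts the $(\bar\psi,\epsilon)$-large dislocations and those lemmas do not require the form \eqref{eq:psi}), shows $\epsilon^{\frac{1}{b}}\Exp{\bar A(\epsilon)}\to 0$ via \eqref{eq:tagged} and the estimate \eqref{eq:S}, and then controls $\Exp{\bar A(\epsilon)^2}$ by repeating the two-tagged-points decomposition of Lemma \ref{lem:m2} (the terms $\bar S$, $\bar R$, $\bar R'$, the separation time $T$, the branching and Markov properties, and the bounds \eqref{eq:SBB'}, \eqref{eq:S2}). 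You bypass all of that with the pathwise domination
\begin{equation}
\bar A(\epsilon)\;\leq\;\bar c\,2^{2-ab}\,\epsilon^{-a}\,\Sigma(2),
\end{equation}
obtained by using the restriction $X_i(r)>\frac{1}{2}$ to upgrade the exponent from $ab<1$ to $2$, and then invoke the square-integrability of $\Sigma(2)$ from Remark \ref{rem_Sigma}; since $a<\frac{1}{b}$ both $\Exp{\bigl(\epsilon^{\frac{1}{b}}\bar A(\epsilon)\bigr)^2}$ and $\epsilon^{\frac{2}{b}}\Exp{\bar A(\epsilon)}$ vanish, and Corollary \ref{cor_NA} finishes the proof. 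This is essentially the mechanism of Lemma \ref{lem:general} (with limit $f_*\equiv 0$), which the paper reserves for the \ref{item:H2} case, transplanted to the present setting -- the indicator $\ind{x>\frac{1}{2}}$ is exactly what restores the integrability exponent $\rho>1$ that fails under \ref{item:H1} without the restriction. What you gain is a shorter, more conceptual argument that needs neither the renewal-type estimate \eqref{eq:S} nor the second-moment machinery of Lemma \ref{lem:m2}; what it costs is reliance on the nontrivial moment bound for $\Sigma(2)$ from \cite{bertoin2012area}, whereas the paper's route recycles estimates it has already established for Lemma \ref{lem:m2} and keeps the whole section self-contained within the subordinator/tagged-fragment toolkit.
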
 
\subsubsection{The case when \ref{item:H2} holds}
We still suppose that $\psi$ has the form \eqref{eq:psi}, which implies that \eqref{eq:psi1} and \eqref{eq:psi2} hold. Now we turn to the situation when \ref{item:H2} holds. This situation differs significantly from the case when \ref{item:H1} holds: now the function $f_1(e^x)e^{-x}$ is not directly Riemann integrable on $\R$, thus we cannot obtain the result in Lemma \ref{lem:m1}. However, \ref{item:H2} implies that $f_1(\epsilon^{-\frac{1}{b}}x) = \nubar(\epsilon x^{-b}) \sim c \epsilon^{a} x^{ab}$ as $\epsilon \to 0$. As $ab>1$, recall that $\Sigma(ab)$ defined as in \eqref{eq:Sigma} is square integrable. Thus intuitively $\Ald(\epsilon)\sim c \epsilon^{-a} \Sigma(ab)$ as $\epsilon \to 0$. 
To give a rigorous proof, let us introduce the following lemma. 

\begin{lem}\label{lem:general}
Recall $f_{\epsilon}$ from \eqref{eq:f}. Suppose that there exist $\rho >1$, $\bar{c}>0$ and $\lambda >0$, such that for every $x\in [0,1]$, 
\begin{equation}\label{eq:general1}
\epsilon^{\lambda} f_{\epsilon}(x) \leq \bar{c} x^{\rho}, \quad \text{for all } \epsilon>0, 
\end{equation}
and that for every $x\in [0,1]$, $f_{*}(x):=\lim_{\epsilon\to 0} \epsilon^{\lambda}f_{\epsilon}(x)$ exists.
Then we have
\begin{equation}
\lime \epsilon^{\lambda}\Ald(\epsilon) = \int_0^{+\infty} \sum_{i=1}^{\infty} f_{*}\left( X_i(t)\right) dt \cvL.
\end{equation}
\end{lem}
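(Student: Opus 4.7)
The plan is to establish the conclusion first pointwise (almost surely) via dominated convergence, and then upgrade it to $\LP$-convergence by a second dominated-convergence argument, using the random variable $\Sigma(\rho)$ from Remark \ref{rem_Sigma} as the dominating envelope. The hypotheses are essentially tailor-made for a dominated convergence argument, so this should be routine.

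The first step is to observe that since each $X_i(t)\in[0,1]$, the hypothesis \eqref{eq:general1} gives the pointwise bound
\begin{equation}
\epsilon^{\lambda}\sum_{i=1}^{\infty}f_{\epsilon}(X_i(t)) \leq \bar{c}\sum_{i=1}^{\infty}X_i(t)^{\rho}, \qquad t\geq 0,\ \epsilon>0,
\end{equation}
and by letting $\epsilon\to 0$ also yields $f_{*}(x)\leq \bar c x^{\rho}$ on $[0,1]$. Because $\rho>1$, Remark \ref{rem_Sigma} ensures that $\Sigma(\rho)=\int_{0}^{\infty}\sum_{i=1}^{\infty}X_i(r)^{\rho}dr$ is almost surely finite and, in fact, has finite moments of every order.

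For the almost sure convergence, fix $t\geq 0$. Note that $f_{\epsilon}(0)=0$ since $\psi(0,\cdot)\equiv 0$, so $f_{*}(0)=0$ as well. The pointwise convergence $\epsilon^{\lambda}f_{\epsilon}(x)\to f_{*}(x)$ combined with the summable dominator $\bar c X_i(t)^\rho$ (note $\sum_i X_i(t)^\rho\leq \sum_i X_i(t)\leq 1$) gives by dominated convergence
\begin{equation}
\lim_{\epsilon\to 0}\epsilon^{\lambda}\sum_{i=1}^{\infty}f_{\epsilon}(X_i(t)) = \sum_{i=1}^{\infty}f_{*}(X_i(t)).
\end{equation}
A second application of dominated convergence in the $t$-integral, with dominator $\bar c\sum_i X_i(t)^\rho$ whose total integral $\bar c\Sigma(\rho)$ is finite almost surely, upgrades this to
\begin{equation}
\lim_{\epsilon\to 0}\epsilon^{\lambda}\Ald(\epsilon) = \int_{0}^{\infty}\sum_{i=1}^{\infty}f_{*}(X_i(t))\,dt \qquad\text{almost surely.}
\end{equation}

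Finally, to upgrade the convergence to $\LP$, observe that both $\epsilon^{\lambda}\Ald(\epsilon)$ and its almost sure limit are bounded by $\bar c\Sigma(\rho)$, so the squared difference is bounded by the integrable envelope $4\bar c^{2}\Sigma(\rho)^{2}$ (integrability by Remark \ref{rem_Sigma}). A third invocation of dominated convergence, this time in $L^{2}(\mathbb P)$, yields the conclusion. The only mild care needed is in the chain of three dominated-convergence arguments (over $i$, over $t$, and over $\omega$) and in invoking Remark \ref{rem_Sigma} for both the a.s.\ finiteness of $\Sigma(\rho)$ and the square-integrability of $\Sigma(\rho)$; there is no substantial conceptual obstacle.
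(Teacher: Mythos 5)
Your proof is correct and follows essentially the same route as the paper: the bound $\epsilon^{\lambda}\sum_i f_{\epsilon}(X_i(t))\leq \bar c\sum_i X_i(t)^{\rho}$, dominated convergence to get almost sure convergence of $\epsilon^{\lambda}A(\epsilon)$, and a final dominated convergence in $L^2(\mathbb P)$ using the square integrability of $\Sigma(\rho)$ from Remark \ref{rem_Sigma}. Your version merely spells out the three applications of dominated convergence (over $i$, over $t$, over $\omega$) that the paper compresses into two sentences.
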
 
 We note that this lemma does not require $\psi$ to have the form \eqref{eq:psi}. We also remark that although $\lambda$ is not unique, the only interesting choice of $\lambda$ is the one such that $f_{*} \not\equiv 0$.
\begin{proof}
For $t\geq 0$, it follows from \eqref{eq:general1} that
\begin{equation}
\epsilon^{\lambda}\sum_{i=1}^{\infty} f_{\epsilon}(X_i(t))  \leq \bar{c}\sum_{i=1}^{\infty} X^{\rho}_i(t) .
\end{equation}
Recall that $\Sigma(\rho)$ defined as in \eqref{eq:Sigma} is a square integrable random variable, thus $\mathbb P$-almost surely $\bar{c} \Sigma(\rho)$ is finite. Integrate the left-hand side with respect to $t$, then let $\epsilon \to 0$. Hence using the dominated convergence theorem, we get that $\mathbb P$-almost surely
\begin{equation}\lim_{\epsilon \to 0}\epsilon^{\lambda}\Ald(\epsilon) = \lim_{\epsilon \to 0}\epsilon^{\lambda}\int_0^{\infty} \sum_{i=1}^{\infty} f_{\epsilon}(X_i(t))dt = \int_0^{\infty}\sum_{i=1}^{\infty}f_{*}(  X_i(t))dt \leq \bar{c} \Sigma(\rho).\end{equation} 
Using the dominated convergence theorem, we have
\begin{equation}
 \lim_{\epsilon \to 0} \Exp{\left( \epsilon^{\lambda}\Ald(\epsilon) - \int_{0}^{\infty}\sum_{i=1}^{\infty} f_{*}(  X_i(t)) dt\right)^2} =0.
\end{equation} 

\end{proof}

\begin{cor}\label{cor_H2}
 If $\psi$ has the form \eqref{eq:psi} and \ref{item:H2} holds, then 
\begin{equation}
 \lime \epsilon^{a}\Ald(\epsilon) = c \Sigma(ab)\cvL.
\end{equation}
where $\Sigma(ab)$ is defined as in \eqref{eq:Sigma}.
\end{cor}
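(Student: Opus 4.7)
\medskip

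\noindent\textbf{Proof plan for Corollary \ref{cor_H2}.}  The plan is to recognize this as a direct application of Lemma \ref{lem:general} with $\lambda=a$ and $\rho=ab$. The key preparatory step is to rewrite $f_\epsilon$ in terms of $\nubar$: since $\psi(x,\s)=\varphi(\s)x^b$, we have
\begin{equation}
f_\epsilon(x)=\nu\bigl(\s:\varphi(\s)x^b>\epsilon\bigr)=\nubar(\epsilon x^{-b}),\qquad x\in(0,1],\ \epsilon>0.
\end{equation}
With this formula, both ingredients required by Lemma \ref{lem:general} become statements about the tail $\nubar$ near $0$ and can be read off from \ref{item:H2}.

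First I would check the pointwise limit. For fixed $x\in(0,1]$ the quantity $\epsilon x^{-b}\to 0$ as $\epsilon\to 0$, and \ref{item:H2} yields
\begin{equation}
\epsilon^{a}f_\epsilon(x)=\epsilon^{a}\,\nubar(\epsilon x^{-b})\ \longrightarrow\ c\,x^{ab}=:f_*(x).
\end{equation}

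Next I would establish the uniform domination. Because $\varphi$ is bounded, $\nubar(u)=0$ for $u\ge|\varphi|_\infty$, so we may restrict attention to $u\in(0,|\varphi|_\infty)$; on this range \ref{item:H2} together with the monotonicity of $\nubar$ gives a constant $\bar c>0$ with $\nubar(u)\le \bar c\,u^{-a}$ (use the equivalence $\nubar(u)\sim cu^{-a}$ near $0$ to bound $\nubar$ on a small interval $(0,u_1]$, and use monotonicity of $\nubar$ together with $\nubar(u_1)<\infty$ to extend the bound up to $|\varphi|_\infty$). Hence for all $\epsilon>0$ and $x\in(0,1]$,
\begin{equation}
\epsilon^{a}f_\epsilon(x)=\epsilon^{a}\,\nubar(\epsilon x^{-b})\le \bar c\,\epsilon^{a}(\epsilon x^{-b})^{-a}=\bar c\, x^{ab},
\end{equation}
with the bound holding trivially (as $0$) in the case $\epsilon x^{-b}\ge|\varphi|_\infty$. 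Since $\rho:=ab>1$ by \ref{item:H2}, the hypothesis \eqref{eq:general1} of Lemma \ref{lem:general} is satisfied.

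Finally I would invoke Lemma \ref{lem:general} and recognize the resulting integral:
\begin{equation}
\lime \epsilon^{a}\Ald(\epsilon)=\int_0^{\infty}\sum_{i=1}^{\infty}f_*(X_i(t))\,dt=c\int_0^{\infty}\sum_{i=1}^{\infty}X_i(t)^{ab}\,dt=c\,\Sigma(ab)\cvL,
\end{equation}
which is exactly the claim. The proof involves no real obstacle beyond the uniform bound, for which one must ensure that the control $\nubar(u)\le \bar c u^{-a}$ is not only asymptotic but holds globally on the support of $\nubar$; this is where boundedness of $\varphi$ and monotonicity of $\nubar$ come in.
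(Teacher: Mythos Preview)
Your proof is correct and follows essentially the same route as the paper: rewrite $f_\epsilon(x)=\nubar(\epsilon x^{-b})$, verify the pointwise limit $\epsilon^a f_\epsilon(x)\to c x^{ab}$ and the uniform bound $\epsilon^a f_\epsilon(x)\le \bar c\,x^{ab}$ with $ab>1$, then invoke Lemma~\ref{lem:general}. The paper's argument is identical in structure; you are merely more explicit than the paper in justifying why the global bound $\nubar(u)\le \bar c\,u^{-a}$ holds on all of $(0,|\varphi|_\infty)$ rather than just asymptotically.
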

\begin{proof}
If \ref{item:H2} holds, then there exists $\bar{c}>0$ such that 
\begin{equation}g(u) = g(u)\ind{ u \leq |\varphi|} \leq \bar{c} \ind{ u \leq |\varphi|}  u^{-a}, \quad \text{for all } u>0.\end{equation}
where $|\varphi|$ stands for the $L^{\infty}$ norm of $\varphi$. Thus, recalling $f_{\epsilon}$ from \eqref{eq:f}, for every $x\in [0,1]$ we have
\begin{equation}\epsilon^a  f_{\epsilon} (x)= \epsilon^a  \nubar (\epsilon x^{-b})\leq \bar{c} x^{ab}, \quad \text{for all } \epsilon>0.\end{equation}
Further, \ref{item:H2} yields that for every $x\in [0,1]$,
\begin{equation}\lim_{\epsilon \to 0} \epsilon^a  f_{\epsilon} (x) = \lim_{\epsilon\to 0}\epsilon^a  g (\epsilon x^{-b}) = c x^{ab}.\end{equation}
Hence the claim follows from Lemma \ref{lem:general}.
\end{proof}

Now we complete the proof of Theorem \ref{thm:large}. 
\begin{proof}[Proof of Theorem \ref{thm:large}: when \ref{item:H2} holds.]
Recall that we have reduced the proof to the homogeneous case, $\alpha=0$. Then the result follows from Corollary \ref{cor_H2} and Corollary \ref{cor_NA}. 
\end{proof}

\section{The Brownian triangulation}\label{section:Brownian}
In this section, we come back to the Brownian triangulation $\B$. We will prove Theorem \ref{thm:Brownian} and find the law of the length of the longest chord in $\B$.

Recall from the \hyperref[section:intro]{introduction} that $\B$ is encoded by a normalized Brownian excursion $e$, i.e. almost surely 
\begin{equation}\mathcal B= \bigcup_{s \overset{e}{\sim} t, s,t\in[0,1)}[{\rm e}^{i2\pi s}, {\rm e}^{i2\pi t}], \end{equation}
 where $s \overset{e}{\sim} t$ if and only if $e(s)=e(t)=\min_{r\in [s\wedge t, s\vee t]}e(r)$.
We have also introduced a fragmentation process 
 $$\Theta_e(t)= \{s\in (0,1): e_s>t\}, \quad t\geq 0.$$
Let us first give a formal proof of Proposition \ref{prop:bijection}, that there exists a bijection between the faces of $\B$ and the dislocations in $\Theta_e$.
\begin{proof}[Proof of Proposition \ref{prop:bijection}]
For every $s\in (0,1)$, we write $cl_{e}(s)$ for the equivalence class under relation $\overset{e}{\sim}$. 
We observe that for each $s\in (0,1)$, $cl_{e}(s)$ contains at most three points, since $\mathcal B$ is a triangulation. 

Now let us prove the bijection between the triangles of $\B$ and the dislocations in $\Theta_e$.
Suppose that a triangle of $\mathcal B$ has vertices $\e^{i 2\pi s_1}$, $\e^{i 2\pi s_2}$ and $\e^{i 2\pi s_3}$ with $s_1< s_2 <s_3$, then $s_1\overset{e}{\sim} s_2 \overset{e}{\sim} s_3$ and thus $cl_{e}(s_1) = \{ s_1, s_2 ,s_3\}$, because $cl_{e}(s_1)$ cannot contain more than three points. On the one hand, by the definition of $\overset{e}{\sim}$ we see that $e(s_1)= e(s_2)= e(s_3)$. On the other hand, for every $r \in (s_1, s_2)\cup (s_2, s_3)$ we must have $e(r)>e(s_1)$: because otherwise there is $e(r) = e(s_1)$ and thus $r \in cl_{e}(s_1)$, which is impossible. Therefore, at time $e(s_1)$ there is a dislocation of $\Theta_e$, the interval $(s_1, s_3)$ splits into $(s_1, s_2)$ and $(s_2, s_3)$. 

Conversely, if in $\Theta_e$ there is a dislocation that an interval $(s_1, s_3)$ splits into $(s_1, s_2)$ and $(s_2, s_3)$, then $s_1\overset{e}{\sim} s_2 \overset{e}{\sim} s_3$ and thus $cl_{e}(s_1) = \{ s_1, s_2 ,s_3\}$. So there is triangle of $\mathcal B$ with vertices $\e^{i 2\pi s_1}$, $\e^{i 2\pi s_2}$ and $\e^{i 2\pi s_3}$.
\end{proof}

According to \cite{bertoin2002self}, the fragmentation process $\Theta_e$ has index of self-similarity $-\frac{1}{2}$ and no erosion. Its dislocation measure $\nu_e$, binary and conservative, is specified by 
\begin{equation}
\nu_e(ds_1)= \frac{2}{\sqrt{2\pi s_1^3(1-s_1)^3}}ds_1 ~~1/2\leq s_1<1.
\end{equation}
Let us regard $\nu_e$ as a measure on $\Sd$, thus $\nu_e$ is supported on
\begin{equation}\left\{ \s= (s_1, s_2 , 0, \cdots) \in \Sd~:~  s_1 + s_2=1 \right\}\subset \Sd .\end{equation}
It is clear that $\nu_e$ is non-lattice in the sense of Definition \ref{defn:lattice}. By calculation we also find the quantities defined as in Lemma \ref{lem:xi}:
\begin{equation}\label{eq:mBT}
m =\int_{\Sd} \sum_{i=1}^{\infty} s_i\log(s_i^{-1}) \nu_e(d\s)= 2\sqrt{2\pi},
\end{equation}
\begin{equation}\label{eq:phiBT}
  \Phi(p)= \int_{\Sd} \left(1- \sum_{i=1}^{\infty}s_i^{p+1}\right) \nu_e(d\s) = 2\sqrt{2} \frac{\Gamma(p + 1/2)}{\Gamma(p)}, \quad p>0,
\end{equation}
where $\Gamma$ is the Gamma function.

\subsection{Proof of Theorem \ref{thm:Brownian}}
In the \hyperref[section:intro]{introduction}, we have marked a dislocation in $\Theta_e$ by $(x, (s_1,s_2))\in (0,1]\times \Delta$, if in this dislocation an interval of length $x$ splits into two pieces of respective lengths $xs_1$ and $xs_2$. 
To make notations consistent with Section \ref{section:large}, let us mark a dislocation in $\Theta_e$ by $(x, (s_1,s_2,0, \cdots))\in (0,1]\times \Sd$ from now on. 
 \begin{proof}[Proof of Theorem \ref{thm:Brownian}]
 {\bf 1.}
In our fragmentation point of view, $N'(\epsilon)$ equals the number of all $(\psi', \epsilon)$-large dislocations of $\Theta_e$, where $\psi'$ as in \eqref{eq:psi'} is defined by
\begin{equation}\psi'(x,\s)=\min(2 \sin(\pi x), 2 \sin(\pi xs_1), 2 \sin(\pi xs_2)),\quad \markd.\end{equation}
We cannot directly apply Theorem \ref{thm:large} to $\Nld'(\epsilon)$, because $\psi'$ is not of form \eqref{eq:psi}. Let us consider $\psi_1: [0,1] \times \Sd \to \R_+$, a function defined by $\psi_1(x, \s)= (1-s_1) x $, $\markd$. This function is of form \eqref{eq:psi}. Hence we will study $N^{\psi_1}(\epsilon)$, the number of $(\psi_1,\epsilon)$-large dislocations of $\Theta_e$, and compare $\Nld'(\epsilon)$ with $N^{\psi_1}(\epsilon)$.

Recall that $s_1\geq s_2$ by the definition of space $\Sd$. On the one hand, if $x\leq \frac{1}{2}$, then $\sin(\pi xs_2) \leq \sin(\pi xs_1) \leq \sin(\pi x)$, thus 
\begin{equation}\left \{(x ,\s) \in (0,1]\times \Sd:  2 \sin(\pi xs_2) > \epsilon, x\leq \frac{1}{2} \right\} \subset\left \{(x ,\s) \in (0,1]\times \Sd: \psi'(x,\s) > \epsilon \right\}.\end{equation}
On the other hand, it is plain that $\psi'(x,\s) \leq 2 \sin(\pi xs_2)$ and thus
\begin{equation}\left\{(x ,\s) \in (0,1]\times \Sd: \psi'(x,\s)> \epsilon  \right\} \subset \left\{(x ,\s) \in (0,1]\times \Sd: 2 \sin(\pi xs_2)> \epsilon  \right\}. \end{equation}
We may assume that $\epsilon < 2$, so $\arcsin (\epsilon/2)$ is well-defined. Let $\bar{N}^{\psi_1}(\epsilon)$ be the number of the dislocations which are $(\psi_1,\epsilon)$-large and whose marks $(x,\s)\in (0,1]\times \Sd$ satisfy $x >\frac{1}{2}$. Then the above observations yield
\begin{equation}\label{eq:N'}
N^{\psi_1}(\pi^{-1}\arcsin (\epsilon/2))- \bar{N}^{\psi_1}(\pi^{-1}\arcsin (\epsilon/2)) \leq N'(\epsilon) \leq N^{\psi_1}(\pi^{-1}\arcsin (\epsilon/2)).
\end{equation}

Let us look at $N^{\psi_1}(\epsilon)$. Since $\psi_1$ is of form \eqref{eq:psi} and $g_1$ defined as in \eqref{eq:nubar} is
\begin{equation}\label{eq:gBT}
g_1 (u):= \nu_e(\s \in \Sd: 1- s_1>u) = \frac{2\sqrt{2}}{\sqrt{\pi}} u^{-\frac{1}{2}} \frac{1-2u}{ \sqrt{1-u }}\ind{u<\frac{1}{2}}  \sim \frac{2\sqrt{2}}{\sqrt{\pi}} u^{-\frac{1}{2}}, \quad u\to 0^+,
\end{equation}
the hypothesis \ref{item:H1} is satisfied. We also find that $\int_0^{\infty} g_1(u) du =\left. \frac{4\sqrt{2}}{\sqrt{\pi}} \sqrt{(1-u)u}\right|_0^{\frac{1}{2}} = \frac{2\sqrt{2}}{\sqrt{\pi}}$. Thus using Theorem \ref{thm:large} and \eqref{eq:mBT} yields
\begin{equation}
\lime \epsilon N^{\psi_1}(\epsilon)= \frac{1}{\pi}\cvL.
\end{equation}
Further, since \ref{item:H1} holds, applying Lemma \ref{lem:small} to $\bar{N}_{\infty}^{\psi_1}(\epsilon)$, we have
\begin{equation}
\lime \epsilon \bar{N}^{\psi_1}(\epsilon)= 0 \cvL.
\end{equation}
Combining these two limits and \eqref{eq:N'}, we prove the claim. 

{\bf 2.} 
Recall that $N''(\epsilon)$ is the number of all $(\psi'', \epsilon)$-large dislocations of $\Theta_e$, where $\psi''$ defined as in \eqref{eq:psi''} is
\begin{equation} \psi''(x, \s) :=2 \sin(\pi xs_1) \sin(\pi xs_2)\sin(\pi x), \quad \markd.\end{equation}
This function is not of form \eqref{eq:psi}, thus we cannot use Theorem \ref{thm:large} directly. However, we will study $N''(\epsilon)$ by using similar arguments as in the proof of Theorem \ref{thm:large}.

Because the transformation explained in Section \ref{section:index} does not affect the number of total large dislocations, we regard $\Nld''(\epsilon)$ as the number of $(\psi'',\epsilon)$-large dislocations in $\X$, the homogeneous mass fragmentation counterpart of $\Theta_e$. 
For $(x, (s_1,s_2, 0,\cdots))\in (0,1]\times \Sd$ such that $s_1+s_2=1$, by the trigonometric addition formula $\cos(z_1 - z_2)- \cos(z_1+z_2) = 2\sin(z_1)\sin(z_2)$, we have
\begin{linenomath}\begin{align}
\psi''(x, \s) = 2 \sin(\pi xs_1) \sin(\pi xs_2)\sin(\pi x) = \left(\cos(\pi x -2\pi x s_2) - \cos(\pi x) \right) \sin(\pi x).
\end{align}\end{linenomath}
Hence for every $0<x\leq 1$, $\psi''(x,\s)>\epsilon$ if and only if
\begin{equation}
1- s_1 = s_2 >  h''(x,\epsilon) :=\frac{1}{2} -\frac{1}{2\pi x}\arccos\left(\min \left(\cos \pi x  +\frac{\epsilon}{\sin \pi x}, 1\right)\right).
\end{equation}
Note that when $\epsilon \geq \sin(\pi x) (1 - \cos(\pi x))$, this inequality reads $s_2>\frac{1}{2}$ when means for all $\s\in \Sd$ it is impossible to have $\psi''(x,\s)>\epsilon$.
We want to use Lemma \ref{lem:general} to $A''(\epsilon)$ defined as in \eqref{eq:Ainf}:
\begin{equation}
A''(\epsilon) = \int_0^{\infty} \sum_{i=1}^{\infty} \nu_e \left( \s\in \Sd: \psi''(X_i(t), \s) >\epsilon \right) dt.
 \end{equation}
So we check the two assumptions in Lemma \ref{lem:general}. On the one hand, it is clear that for every $x>0$, 
\begin{equation}
\lim_{\epsilon \to 0} h''(x,\epsilon)\epsilon^{-1} =(2\pi x \sin^2 \pi x)^{-1},
\end{equation}
then using the function $g_1$ defined in \eqref{eq:gBT}, we have for every $\epsilon>0$
\begin{equation}
\lim_{\epsilon \to 0} \epsilon^{\frac{1}{2}}\nu_e \left( \s\in \Sd: \psi''(x, \s) >\epsilon \right) = \lim_{\epsilon \to 0} \epsilon^{\frac{1}{2}} g_1 (h''(x,\epsilon) ) = 4 x^{\frac{1}{2}} \sin \pi x.
\end{equation}
On the other hand, observing that 
\begin{equation}
 \psi''(x, \s) = 2 \sin(\pi xs_1) \sin(\pi xs_2)\sin(\pi x)\leq 2 \pi^3 x^3 s_2, 
\end{equation}
and $g_1 (u) \leq \frac{2\sqrt{2}}{\sqrt{\pi}} u^{-\frac{1}{2}}$ for every $u>0$, we have
\begin{equation} 
\epsilon^{\frac{1}{2}}\nu_e \left( \s\in \Sd: \psi''(x, \s) >\epsilon \right) \leq \epsilon^{\frac{1}{2}}g_1(\epsilon (2 \pi^3 x^3)^{-1}  ) \leq 4 \pi x^{\frac{3}{2}}.
\end{equation}
Hence it follows from Lemma \ref{lem:general} and Corollary \ref{cor_NA} that
 \begin{equation}
\lime \epsilon^{\frac{1}{2}}N''(\epsilon) = \int_{0}^{\infty}\sum_{i=1}^{\infty} 4 X_i(t)^{\frac{1}{2}}\sin( \pi X_i(t)) dt \cvL.
\end{equation}
By applying Lemma \ref{lem:index} to $\Theta_{e}$ (with index of self-similarity $-\frac{1}{2}$) and its homogeneous counterpart $\X$, we rewrite the right-hand side in terms of $\Theta_{e}(t) = \bigcup_{i\in \N} I_i(t)$ and obtain the desired result.
\end{proof}

\subsection{The length of the longest chord}

In \cite{aldous1994triangulating}, Aldous has determined the law of the length of the longest chord by approximating the Brownian triangulation $\B$ by discrete uniform triangulations of polygons, studying uniform triangulations and then passing to the limit. Here we propose another approach using the bijection in Proposition \ref{prop:bijection}.  

We will make use of the {\it centroid}, the face that contains the origin, since it is plain that the longest chord must be an edge of the centroid.
Almost surely, no chord in $\B$ passes through the origin thus the centroid in $\B$ is unique. Let us consider the dislocation in $\Theta_e$ associated with the centroid. By Proposition \ref{prop:bijection}, if it is marked by $(x,\s)\in (0,1]\times \Sd$, then the vertices of the centroid divide the unit circle into three arcs whose lengths are $(2\pi(1-x),~ 2\pi x s_1,~  2\pi x s_2)$. Due to the property of the centroid, each of these arcs has length less than $\pi$, then
\begin{equation}\label{eq:centroid}
(1-x)<\frac{1}{2} ~\&~ x s_1<\frac{1}{2}~\&~    x s_2 <\frac{1}{2} \quad \Longleftrightarrow \quad x>\frac{1}{2}~\&~ xs_1 < \frac{1}{2}\quad \Longleftrightarrow \quad \min(x, 1-xs_1)> \frac{1}{2} .
\end{equation}
Conversely, it is easy to see that if a dislocation in $\Theta_e$ is marked by $(x,\s)\in (0,1]\times \Sd$ verifying the above relation, then it must correspond to the centroid. By further study of the centroid we have the following observation. 
\begin{lem}\label{lem:longest}
Let $2\pi L$ be the length of the minor arc with the same endpoints as the longest chord in $\B$, then $L\leq \frac{1}{2}$ and the longest chord has length $2\sin(\pi L)$. Define a function $\psi_L: [0,1] \times \Sd \to [0,\infty)$ by $\psi_L(x, \s) = \min(x, 1-xs_1)$. For $a\leq \frac{1}{2}$, let $\Nld(1- a)$ be the number of $(\psi_L, 1-a)$-large dislocations in $\Theta_e$ as in Definition \ref{defn:larged}, then 
\begin{equation}
\Prob{L < a} = \Exp{\Nld(1- a)}
\end{equation}
\end{lem}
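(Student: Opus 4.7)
My plan is to translate the statement into the language of the dislocation of $\Theta_e$ associated with the centroid, and then exploit the uniqueness of the centroid.

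The first step is geometric. Denote by $(x^*,\s^*)\in (0,1]\times\Sd$ the mark of the dislocation corresponding to the centroid. By Proposition \ref{prop:bijection}, the three sides of the centroid correspond to three arcs of normalised lengths $1-x^*$, $x^*s_1^*$ and $x^*s_2^*$, each of which is strictly less than $1/2$ by \eqref{eq:centroid}. A chord whose two endpoints subtend an arc fraction $\ell\in [0,1/2]$ on one side and $1-\ell$ on the other has length $2\sin(\pi\ell)$, and $\sin(\pi\cdot)$ is strictly increasing on $[0,1/2]$, so the longest chord is the one associated with the largest of the three arc fractions. Using $s_1^*\geq s_2^*$, this largest fraction equals $\max(1-x^*,x^*s_1^*)$, and a short case split on whether $x^*\leq 1-x^*s_1^*$ or not gives
\begin{equation}
\max(1-x^*,x^*s_1^*)\;=\;1-\min(x^*,1-x^*s_1^*)\;=\;1-\psi_L(x^*,\s^*).
\end{equation}
This identifies $L=1-\psi_L(x^*,\s^*)$ and yields simultaneously $L\leq 1/2$ (since $\psi_L(x^*,\s^*)>1/2$ by \eqref{eq:centroid}) and the value $2\sin(\pi L)$ for the longest chord.

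The probabilistic identity will then follow from the uniqueness of the centroid. For $a\leq 1/2$, any dislocation with $\psi_L(x,\s)>1-a\geq 1/2$ satisfies $\min(x,1-xs_1)>1/2$, which is precisely the centroid criterion of \eqref{eq:centroid}; since the centroid is almost surely unique, at most one dislocation can meet this condition, and it must be the centroid's. Hence $\Nld(1-a)\in\{0,1\}$, with $\Nld(1-a)=1$ if and only if $\psi_L(x^*,\s^*)>1-a$, equivalently $L<a$. Taking expectation gives $\Prob{L<a}=\Exp{\Nld(1-a)}$.

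The main, though mild, obstacle I expect is the geometric step: one must check that for the centroid the minor arc on each chord is indeed the corresponding single original arc rather than the sum of the other two (using that all three arcs are shorter than $\pi$), and then perform the case split to identify the longest arc with $1-\psi_L(x^*,\s^*)$. Once the geometry is pinned down, the probabilistic conclusion is essentially immediate from the almost-sure uniqueness of the face containing the origin.
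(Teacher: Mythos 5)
Your proposal is correct and follows essentially the same route as the paper: both use the bijection of Proposition \ref{prop:bijection}, the centroid characterization \eqref{eq:centroid}, and the almost-sure uniqueness of the face containing the origin to show $\Nld(1-a)\in\{0,1\}$ with $\Nld(1-a)=1$ exactly when $L<a$. Your explicit identification $L=\max(1-x^*,x^*s_1^*)=1-\psi_L(x^*,\s^*)$ just makes transparent a step the paper's case split ($\Nld(1-a)=1$ or $0$) uses implicitly.
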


\begin{proof}
A dislocation is $(\psi_L, 1-a)$-large if and only if its mark $(x,\s)\in (0,1]\times \Sd$ satisfies
 \begin{equation}\label{eq:La}
 \min(x, 1-xs_1)> 1-a \quad \Longleftrightarrow \quad  x>1-a~\&~ xs_1 <a \quad \Longleftrightarrow \quad (1-x)<a ~\&~ x s_1<a~\&~    x s_2 <a. 
\end{equation}
As $ (1-a) \geq \frac{1}{2}$, if there is a $(\psi_L, 1-a)$-large dislocation in $\Theta_e$, then it is associated with the centroid by \eqref{eq:centroid}. In particular, almost surely $\Nld(1- a) =1$ or $0$. 

Let us consider the the dislocation in $\Theta_e$ associated with the centroid in $\B$, whose mark is $\markd$. The longest chord in $\B$ must be an edge of the centroid, thus $L\in \{ (1-x), x s_1,  x s_2\}$. 
If $\Nld(1- a)=1$, then this dislocation is $(\psi_L, 1-a)$-large thus $(x,\s)$ verifies \eqref{eq:La}, which implies $L<a$. If $\Nld(1- a)=0$, then this dislocation is not $(\psi_L, 1-a)$-large, thus $\max(1-x, xs_1, xs_2)\geq a$, which implies $L\geq a$. Hence we conclude that $\Prob{L < a} = \Exp{\Nld(1- a)}$.
\end{proof}

To determine the law of the length of the longest chord in $\B$, we still need to calculate $ \Exp{\Nld(1- a)}$ explicitly. By the transformation in Section \ref{section:index}, which preserves the total number of large dislocations, we may regard $\Nld(1- a)$ as the number of $(\psi_L, 1-a)$-large dislocations of the homogeneous counterpart $\X$ of $\Theta_e$. By Lemma \ref{lem:Mart}, 
\begin{equation}
\Exp{\Nld(1- a)}= \Exp{\int_0^{\infty}\sum_{i=1}^{\infty} g_1 (1-a/X_i(r))\ind{X_i(r)>1-a} dr},
\end{equation}
where function $g_1: \R_+ \to \R_+$ is given by \eqref{eq:gBT}. By \eqref{eq:tagged}, the right-hand side equals
\begin{equation}\label{eq:PU}
\ind{a>\frac{1}{3}} \frac{2\sqrt{2}}{\sqrt{\pi}} \int_{\log{\frac{1}{2a}}}^{\log{\frac{1}{1-a}}} \e^{x}  \frac{2a {\e}^{x}-1}{ \sqrt{(1-a{\e}^{x}){\e}^{x}a}} dU(x).
\end{equation}
where, according to Lemma \ref{lem:xi} and \eqref{eq:phiBT}, the measure $dU$ is characterized by its Laplace transform
\begin{equation} 
\int_{0}^{\infty} \e^{-p x}dU(x)  =  \left( \int_{\Sd} \left(1- \sum_{i=1}^{\infty}s_i^{p+1}\right) \nu_e(d\s) \right)^{-1}= \frac{1}{2\sqrt{2}} \frac{\Gamma(p)}{\Gamma(p+1/2)}, \quad p> 0. 
\end{equation}
Noticing that
\begin{equation}
 \int_0^{+\infty} \e^{-p x} (1-\e^{-x})^{-1/2}dx = \Gamma(1/2)\frac{\Gamma(p) }{\Gamma(p +1/2) },\quad p> 0,
\end{equation}
we have   
\begin{equation}
dU(x)= \frac{1}{2\sqrt{2\pi}} (1-\e^{-x})^{-1/2}dx, \quad x\in [0,\infty). 
\end{equation}
Hence, rewriting \eqref{eq:PU}, we have
\begin{linenomath}\begin{align}
\Prob{L < a}& =\ind{a>\frac{1}{3}}\frac{1}{a\pi}\int_{\frac{1}{2}}^{\frac{a}{1-a}} \frac{2y-1}{\sqrt{(1-y)(y-a)}} dy\\
&=  \ind{a>\frac{1}{3}} \left( 6\pi^{-1}(\arctan(3^{-\frac{1}{2}})  -\arctan((1-2a)^{\frac{1}{2}}) ) -\frac{(3a-1)(1-2a)^{\frac{1}{2}}}{\pi a(1-a)}\right), \quad 0 < a <\frac{1}{2},
\end{align}\end{linenomath}
which is the formula (9) in \cite{aldous1994triangulating}.


\section{Random stable laminations of a disk}\label{section:stable}
In this section, we generalize our work about the Brownian triangulation to the stable laminations. Specifically, we will study the number of their large faces and find the laws of the lengths of their longest chords.

 A {\it (geodesic) lamination of the disk $\D$} is a closed subset of $\D$, which can be written as the union of a random collection of non-crossing chords of the circle $\partial \D$. In particular, a triangulation is a lamination. Conversely, it is easy to see that a lamination is a triangulation if and only if it is maximal for the inclusion relation among the laminations of $\D$. 

The {\it random stable lamination of the disk with parameter} $\beta \in (1,2]$ (or shortly {\it $\beta$-stable lamination}) is a random model of laminations which was introduced by Kortchemski \cite{kortchemski2014random}. For $\beta=2$, the $\beta$-stable lamination is the Brownian triangulation. Hence from now on, we consider the $\beta$-stable lamination with $\beta \in (1,2)$. It has been shown in \cite{kortchemski2014random} that the $\beta$-stable lamination is the distributional limit for the Hausdorff topology of certain families of random dissections of polygon $P_n$ when $n\to \infty$, which we do not describe here. But let us briefly present two other constructions in \cite{kortchemski2014random} that connect the $\beta$-stable lamination with the $\beta$-stable process and the $\beta$-stable tree. 

Let $X$ be a strictly $\beta$-stable spectrally positive L\'evy process, whose Laplace transform is 
\begin{equation}\Exp{\exp(-p X_t) } = \exp(p^{\beta}t), \quad \text{ for } t, p \geq 0.\end{equation}
 Let $X^{exc}$ be the normalized excursion of $X$ (see Section 2.1 of \cite{kortchemski2014random}). For $0\leq s<t\leq 1$, we define 
\begin{equation} s \overset{X^{exc}}{\simeq} t, \text{ if } t = \inf \{u>s; X^{exc}_u \leq X^{exc}_{s-}\}.\end{equation}
Then $L_{X^{exc}} := \bigcup_{ s\overset{X^{exc}}{\simeq} t} [{\rm e}^{ 2\pi is},{\rm e}^{ 2\pi it}]$ is the $\beta$-stable lamination. 

The random stable lamination can also be encoded by a random continuous function $(H^{exc}_t)_{t\in [0,1]}$. A way to define $H^{exc}$ is as follows. For every $t\geq 0$, set
\begin{equation}H_t :=\lim_{\epsilon \to 0} \frac{1}{\epsilon} \int_0^t \ind{X_s \leq \inf_{u\in [s,t]} X_u +\epsilon} ds ,\end{equation}
where the limit holds in probability. Replacing $H$ by its continuous modification, then we define $H^{exc}$ by the normalized excursion of $H$. More details about $H^{exc}$ can be found in Section 4 of \cite{kortchemski2014random} or Section 2.2 in \cite{miermont2003self1}.
Next, for $s, t\in [0,1]$, we define a relation $s \overset{H^{exc}}{\approx} t$ as follows. 
We first define $s \overset{H^{exc}}{\sim} t$ if and only if $H^{exc}(s)=H^{exc}(t)=\min_{r\in [s\wedge t,s\vee t]}H^{exc}(r)$ (which is the same as the relation to encode the Brownian triangulation), then we say $s \overset{H^{exc}}{\approx} t$, if $ s \overset{H^{exc}}{\sim} t$ and one of the two following properties is satisfied:
\begin{enumerate}
\item $\forall r \in (s\wedge t,s\vee t)$, $H^{exc}(r) > H^{exc}(s)=H^{exc}(t)$,
\item Let $cl_{H^{exc}}(s):=\{r| r\overset{H^{exc}}{\sim} s\}$, then $cl_{H^{exc}}(s) \subset [s\wedge t,s\vee t]$.
\end{enumerate}
According to Theorem 4.5 in \cite{kortchemski2014random}, almost surely
\begin{equation}L_{H^{exc}} := \bigcup_{s\overset{H^{exc}}{\approx} t} [{\rm e}^{ 2\pi is},{\rm e}^{ 2\pi it}] = L_{X^{exc}}.\end{equation}
Hence the $\beta$-stable lamination can be represented by either $L_{X^{exc}}$ or $L_{H^{exc}}$, and we will not distinguish them. 

Further, these representations imply the connection between the $\beta$-stable lamination and the fragmentation process 
\begin{equation}\Theta_{\beta}(t) = \{s\in (0,1): H^{exc}(s)>t \}, \quad t\geq 0.\end{equation}
According to Proposition 1 and Theorem 1 in \cite{miermont2003self1}, this process is a self-similar fragmentation with index $(1/\beta-1)<0$, no erosion and dislocation measure
\begin{equation}\label{eq:nubeta}
\nu_{\beta}(d\s)  = D_{\beta}\Exp{T_1; \frac{\Delta T_{[0,1]}}{T_1}\in d\s},
\end{equation}
where $(T_x)_{x\geq 0}$ is a $\beta$-stable subordinator, $\Delta T_{[0,1]} = (\Delta_1, \Delta_2 , \cdots)$ is the vector of jumps of $T$ before time $1$ reordered in the decreasing order, and $D_{\beta} = \frac{\beta^2\Gamma(2-1/\beta)}{\Gamma(2-\beta)}$.
The connection between the $\beta$-stable lamination and $\Theta_{\beta}$ is described by the following statement. 
\begin{prop}\label{prop_bilam}
Almost surely, there is a bijection between the faces in the $\beta$-stable lamination of $\D$ and the dislocations of the fragmentation $\Theta_{\beta}$. If a face corresponds to a dislocation labeled by $(x, \s)\in [0,1]\times \Sd$, then its vertices divide $\partial \D$ into arcs of lengths $2\pi(1-x, xs_1,xs_2, \cdots)$, and the edges of this face have lengths $(2\sin(\pi x),2\sin(\pi xs_1),2\sin(\pi xs_2) ,\cdots)$. 
\end{prop}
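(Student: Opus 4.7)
The plan is to parallel the proof of Proposition~\ref{prop:bijection} for the Brownian triangulation, replacing $e$ by the continuous height function $H^{exc}$ and using the representation $L_{H^{exc}}$ of the $\beta$-stable lamination from Theorem~4.5 of \cite{kortchemski2014random}, together with the identity $\Theta_\beta(t)=\{s\in(0,1):H^{exc}(s)>t\}$. The argument will proceed in three natural steps: match faces of $L_{H^{exc}}$ with non-singleton equivalence classes of $\overset{H^{exc}}{\sim}$; match these classes with dislocations of $\Theta_\beta$; and finally read off the arc lengths and chord lengths.

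First I would fix a non-singleton equivalence class $C$ of $\overset{H^{exc}}{\sim}$, with common level $t_C:=H^{exc}(r)$ for $r\in C$, and set $r_{\min}:=\inf C$, $r_{\max}:=\sup C$, $I_C:=(r_{\min},r_{\max})$. By the two clauses defining $\overset{H^{exc}}{\approx}$, the chord $[e^{2\pi i r},e^{2\pi i r'}]$ lies in $L_{H^{exc}}$ exactly when $r,r'\in C$ are either consecutive in $C$ (clause 1) or coincide with the pair $\{r_{\min},r_{\max}\}$ (clause 2), and these chords together bound a single face of $L_{H^{exc}}$ whose vertex set is $\{e^{2\pi i r}:r\in C\}$. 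Next, for $s\in I_C\setminus C$ the definition of $\overset{H^{exc}}{\sim}$ forces $H^{exc}(s)>t_C$: otherwise $s$ would either belong to $C$ or contradict that $t_C$ is the minimum of $H^{exc}$ on $[r_{\min},r_{\max}]$. Hence $I_C$ is a connected component of $\Theta_\beta(t_C-)$ which, at time $t_C$, splits into the open components of $I_C\setminus C$, namely the sub-intervals bounded by consecutive elements of $C$; this is precisely a dislocation of $\Theta_\beta$ at time $t_C$, and the map $C\mapsto$(dislocation at $t_C$) is a bijection because the splitting interval together with its sub-components determine $C$ uniquely.

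Setting $x:=r_{\max}-r_{\min}$ and letting $(xs_i)_{i\geq 1}$ be the decreasing rearrangement of the lengths of the components of $I_C\setminus C$, the dislocation is marked by $(x,\s)\in(0,1]\times\Sd$. The vertices of the face divide $\partial\mathbb D$ into arcs of lengths $2\pi xs_i$ (between consecutive vertices corresponding to the split sub-intervals) and one ``outer'' arc of length $2\pi(1-x)$ from $e^{2\pi i r_{\max}}$ back to $e^{2\pi i r_{\min}}$. The chord-length identity $|e^{2\pi i r}-e^{2\pi i r'}|=2\sin(\pi(r'-r))$ for $0\leq r'-r\leq 1/2$, combined with $\sin(\pi(1-x))=\sin(\pi x)$, then yields the claimed edge lengths $2\sin(\pi x)$ and $2\sin(\pi xs_i)$.

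The principal technical obstacle, absent in the Brownian case, is that for $\beta<2$ the equivalence classes of $\overset{H^{exc}}{\sim}$ are almost surely infinite, corresponding to the infinite-degree branch points of the $\beta$-stable tree. One therefore needs to verify that $C$ admits a countable enumeration that makes ``consecutive elements'' well defined, and that the components of $I_C\setminus C$ exhaust $I_C$ up to Lebesgue-null mass, so that $\sum_i xs_i=x$. Both facts can be extracted from the representation of $\nu_\beta$ in \eqref{eq:nubeta} via jumps of a $\beta$-stable subordinator, which in particular forces $\nu_\beta$-almost every $\s\in\Sd$ to satisfy $\sum_{i\geq 1}s_i=1$, the conservativity that closes the correspondence.
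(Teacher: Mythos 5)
Your construction gets the ``class $\Rightarrow$ face and dislocation'' direction and the length computations right, but two of its pivots are unjustified, and one is false as stated. The matching ``faces $\leftrightarrow$ non-singleton classes'' (and the claim in your last paragraph that the classes of $\overset{H^{exc}}{\sim}$ are almost surely infinite) is incorrect: for $\beta\in(1,2)$, just as for $\beta=2$, there are almost surely (uncountably many) classes of cardinality exactly two, encoding the skeleton points of the stable tree; such a class contributes a single chord of $L_{H^{exc}}$, spans no two-dimensional region, and induces no dislocation of $\Theta_\beta$ (the corresponding component of $\Theta_\beta$ shrinks continuously onto $I_C$, nothing splits). The correct threshold, which is exactly what the paper imports from Proposition 4.4 of \cite{kortchemski2014random}, is cardinality at least three: only those classes correspond to jump times of $X^{exc}$ and enter the bijection. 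Relatedly, for such a class $C$ the vertex set is the closure of the endpoints of the connected components of $I_C\setminus C$ and is in general uncountable, so there is no ``countable enumeration making consecutive elements well defined''; the chords must be indexed by the gaps (components of $I_C\setminus C$) together with the pair $\{r_{\min},r_{\max}\}$, which is how clauses (1) and (2) of $\approx$ should be read. Your conservativity remark ($\sum_i s_i=1$ for $\nu_\beta$-a.e.\ $\s$, from \eqref{eq:nubeta}) is correct and, once the above is repaired, does yield the stated arc and edge lengths.

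More seriously, you prove only one direction of the bijection. Your argument shows that each class $C$ with at least three points spans a region which is a face and produces a dislocation at time $t_C$, i.e.\ an injection from dislocations into faces. You never show that every connected component of $\D\setminus L_{H^{exc}}$ arises this way; a priori there could be faces whose boundary consists only of ``limit'' chords coming from two-point classes. Ruling this out is precisely the content of Proposition 3.10 of \cite{kortchemski2014random} (the faces of $L_{X^{exc}}$ are in bijection with the jump times of $X^{exc}$), combined with Theorem 4.5 there ($L_{H^{exc}}=L_{X^{exc}}$ a.s.) and Proposition 4.4 — and this is exactly how the paper's own (short) proof closes the argument. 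To complete your more self-contained route you must either invoke these results, as the paper does, or supply an argument that every face is adjacent along a full chord to the region spanned by some class with at least three elements; as written, the ``bijection'' claim is not established.
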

\begin{proof}
 According to Proposition 3.10 in \cite{kortchemski2014random}, there is a bijection between the faces of $L_{X^{exc}}$ and the jump time of $X^{exc}$. It is clear that the faces of $L_{H^{exc}}$ correspond to a subset of 
\begin{equation}\{cl_{H^{exc}}(u), u\in [0,1]: Card(cl_{H^{exc}}(u))\geq 3 \}.\end{equation}
Finally, by Proposition 4.4 in \cite{kortchemski2014random}, the latter set corresponds to a subset of the jump time of $X^{exc}$. Hence we have the bijection. The second assertion is plain by the geometry.
\end{proof}

\subsection{Large faces in the stable laminations}
Thanks to Proposition \ref{prop_bilam}, we can study the number of large faces in the $\beta$-stable lamination, $\beta\in (1,2)$. We would like to find a result of type Theorem \ref{thm:Brownian}. However, almost surely every face in the $\beta$-stable lamination has infinitely many sides, hence the shortest edge of a face is meaningless. On the other side, we find that the face corresponding to a dislocation labeled by $(x, \s)\in [0,1]\times \Sd$ has area $\frac{1}{2}\sin(2 \pi x) + \frac{1}{2} \sum_{i\in \N} \sin(2 \pi x s_i)$. If we want to estimate the number of faces of large area, then we have to study for every $x\in [0,1]$, the asymptotic behavior as $\epsilon \to 0$ of   
$$ f_{\epsilon}(x) = \nu_{\beta} \left(\s \in \Sd ~:~\frac{1}{2}\sin(2 \pi x)+ \frac{1}{2} \sum_{i\in \N} \sin(2 \pi x s_i) > \epsilon \right),$$
which seems rather difficult. Therefore, let us define alternatively the large faces in the following way. 

For each face, we consider the corresponding {\bf minor} arcs of its edges. For $\epsilon>0$, we define a face to be {\it $\epsilon$-large} if at least two of those arcs are longer than $\epsilon$, and the total length of the rest arcs is greater than $\epsilon$. This definition should be meaningful. In the Brownian triangulation case ($\beta=2$), the triangles with the shortest edges longer than $\epsilon$ are exactly the so-defined $\epsilon'$-large faces with $\epsilon'= 2 \sin(\pi \epsilon)$. We have the following result for the number of $\epsilon$-large faces in the $\beta$-stable lamination.
\begin{thm}\label{thm:sl}
For $\beta\in (1,2)$, let $\Nld(\epsilon)$ be the number of $\epsilon$-large faces defined as above in the $\beta$-stable lamination, then
\begin{equation} \lime \epsilon \Nld(\epsilon) = \frac{2\pi(\beta-1)}{\Gamma(2-\beta)} \Exp{\min(T_1 -\Delta_1, \Delta_1)} \cvL,\end{equation} 
where $T_1$ is the value of the $\beta$-stable subordinator $T$ at time $1$, and $\Delta_1$ is the largest jump of $T$ before time $1$. 
 \end{thm}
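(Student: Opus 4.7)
The plan is to follow the template established in the proof of Theorem \ref{thm:Brownian}: I would use the bijection of Proposition \ref{prop_bilam} to rewrite $N(\epsilon)$ as the number of dislocations of $\Theta_\beta$ whose associated face is $\epsilon$-large, approximate this count by the number $N_\psi(\epsilon)$ of $(\psi,\epsilon)$-large dislocations for a well-chosen function $\psi$ of the form \eqref{eq:psi}, apply Theorem \ref{thm:large}(1) to $\psi$, and handle the small discrepancy between the two counts via Lemma \ref{lem:small}.

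A natural surrogate function is
\begin{equation}
   \psi(x,\s) \;:=\; 2\pi\, x\, \min(s_1,\,1-s_1),
\end{equation}
which is of the form $\varphi(\s)\,x^b$ with $b=1$ and $\varphi(\s)=2\pi\min(s_1,1-s_1)\le \pi$. By Proposition \ref{prop_bilam}, the face associated to a dislocation $(x,\s)$ has arcs of lengths $(2\pi(1-x),\,2\pi xs_1,\,2\pi xs_2,\ldots)$, which coincide with the minor arcs of its edges except possibly when one of these arcs exceeds $\pi$. A short case analysis on which of $2\pi(1-x)$ and the $2\pi xs_i$ is the largest---together with a marginal adjustment when $xs_1>1/2$ that only affects fragments of mass $>1/2$---then shows two things. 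First, every $\epsilon$-large face satisfies $2\pi xs_1>\epsilon$ and $2\pi x(1-s_1)=2\pi\sum_{i\ge 2}xs_i>\epsilon$, i.e.\ $\psi(x,\s)>\epsilon$. Second, any $(\psi,\epsilon)$-large dislocation whose face fails to be $\epsilon$-large must satisfy $2\pi(1-x)\le\epsilon$, i.e.\ its underlying fragment has mass $x>1-\epsilon/(2\pi)>1/2$. Hence
\begin{equation}
   |\,N(\epsilon)-N_\psi(\epsilon)\,|\;\le\;\bar N(\epsilon),
\end{equation}
where $\bar N(\epsilon)$ is the number of $(\psi,\epsilon)$-large dislocations of fragments of mass greater than $1/2$, in the notation of Lemma \ref{lem:small}.

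It then remains to apply Theorem \ref{thm:large}(1) to $\psi$. The measure $\nu_\beta$ is non-lattice because the distribution of $\Delta_1/T_1$ under the $1/\beta$-stable subordinator is diffuse, and hypothesis \ref{item:H1} for $\varphi$ requires showing that
\begin{equation}
   g(u) \;=\; \nu_\beta\!\left(\min(s_1,1-s_1)>u\right) \;=\; D_\beta\,\mathbb{E}\!\left[T_1\,;\,uT_1<\Delta_1<(1-u)T_1\right]
\end{equation}
grows slower than $u^{-a}$ for some $0<a<1$ as $u\to 0^+$; this is extracted from the joint tail behaviour of $(\Delta_1,T_1-\Delta_1)$ via the scaling of the stable subordinator. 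Granting this, Theorem \ref{thm:large}(1) gives $\epsilon N_\psi(\epsilon)\to m^{-1}\int_0^\infty g(u)\,du$ in $L^2(\mathbb{P})$, Lemma \ref{lem:small} gives $\epsilon\bar N(\epsilon)\to 0$ in $L^2(\mathbb{P})$, and the two combine to yield the desired $L^2$ convergence for $\epsilon N(\epsilon)$. Using \eqref{eq:nubeta},
\begin{equation}
   \int_0^\infty g(u)\,du \;=\; \int_{\Sd}\varphi(\s)\,\nu_\beta(d\s) \;=\; 2\pi D_\beta\,\mathbb{E}\!\left[\min(\Delta_1,\,T_1-\Delta_1)\right],
\end{equation}
and the value of $m$ for the $\beta$-stable fragmentation is explicit (computable from the Laplace exponent of its tagged fragment given in \cite{miermont2003self1}); a direct calculation then yields $2\pi D_\beta/m=2\pi(\beta-1)/\Gamma(2-\beta)$, identifying the stated limit. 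The main obstacle will be the verification of hypothesis \ref{item:H1}: since $\nu_\beta$ is an infinite measure and the largest jump of a stable subordinator typically carries a non-vanishing fraction of $T_1$, establishing the required polynomial bound on $g$ with exponent strictly less than one asks for a fairly delicate joint tail estimate on $(\Delta_1,T_1-\Delta_1)$.
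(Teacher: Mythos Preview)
Your approach is essentially identical to the paper's: the same surrogate $\psi_*(x,\s)=x\min(s_1,1-s_1)$ (up to the factor $2\pi$), the same two-sided comparison with the discrepancy controlled by $\bar N$, and the same appeals to Theorem~\ref{thm:large}(1) and Lemma~\ref{lem:small}, followed by the same identification of the constant via $\int_0^\infty g(u)\,du = D_\beta\,\Exp{\min(T_1-\Delta_1,\Delta_1)}$ and an explicit value for $m$. The one place you are too cautious is hypothesis~\ref{item:H1}: no delicate joint tail estimate on $(\Delta_1,T_1-\Delta_1)$ is needed, since trivially $g(u)\le\nu_\beta(\s:1-s_1>u)$, and the paper simply invokes the known asymptotic $\nu_\beta(\s:1-s_1>u)\sim Cu^{-(1-1/\beta)}$ from \cite{haasmiermont04genealogy}, with exponent $1-1/\beta<1$.
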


Before tackling the proof, let us look at the value of $\Exp{\min(T_1 -\Delta_1, \Delta_1)}$. 
We refer to \cite{perman1993jumps} for the joint law of $(T_1, \Delta_1)$, in which it has been proved that the joint law of $(T_1, \Delta_1)$ has a density, although this density function is not explicitly given. Let us calculate $\Exp{\min(T_1 -\Delta_1, \Delta_1)}$ by using the approach in \cite{perman1993jumps}. By the L\'evy-It\^o decomposition, we find that $T_1$ is the sum of the atoms of a Poisson random measure on $(0,+\infty)$ with intensity $C_{\beta}dr/r^{1+1/\beta}$, where $C_{\beta} = (\beta \Gamma(1-\beta^{-1}))^{-1}$. Thus for $y>0$, the probability that no atom has mass greater than $y$ is
\begin{equation}
\Prob{\Delta_1 \leq y}= \exp \left( -\int_y^{\infty} C_{\beta}dr/r^{1+1/\beta}\right) = \exp \left(-\frac{1}{\Gamma(1-1/\beta)} y^{-1/\beta} \right).
\end{equation}
By the restriction property of Poisson point processes, we see that the conditional distribution of $T_1 -\Delta_1$ given $\Delta_1=y$ is a subordinator with L\'evy measure $C_{\beta}\ind{r\leq y}dr/r^{1+1/\beta}$. Write $\tilde{T}^y_1$ for this subordinator, which is characterized by its Laplace exponent 
\begin{equation}
\Exp{\exp(-p \tilde{T}^y_1)} = \exp \left( -\int_0^y C_{\beta}/r^{1+1/\beta} (1-e^{-p r}) dr\right), \quad p \geq 0,
\end{equation}
then we have
 \begin{equation}
  \Exp{\min(T_1 -\Delta_1,\Delta_1 )} = \int_0^{\infty} \Exp{ \min(\tilde{T}^y_1, y)}  \exp \left(-\frac{1}{\Gamma(1-1/\beta)} y^{-1/\beta}\right) C_{\beta}y^{-(1+1/\beta)} dy.
 \end{equation}

\begin{proof}[Proof of Theorem \ref{thm:sl}]
Define $\psi_*: [0,1] \times \Sd\to [0,\infty)$ by $\psi_*(x,\s) = \min (s_1, 1-s_1) x$ and let $N^{\psi_*}(\epsilon)$ be the number of all $(\psi_*, \epsilon)$-large dislocations. Let us compare $\Nld^{\psi_*}(\frac{\epsilon}{2\pi})$ with $\Nld(\epsilon)$. 
For a face associated with a dislocation marked by $\markd$, the corresponding shorter arcs of the edges have lengths 
\begin{equation} \min(2\pi (1-x),2\pi x ),~ \min(2\pi (1-xs_1),2\pi xs_1 ),~ 2\pi xs_2=\min(2\pi (1-xs_2),2\pi xs_2 ),~ \cdots \end{equation}
When $x<\frac{1}{2}$, the two longest arcs have lengths $2\pi x$ and $2\pi xs_1$. Thus if a dislocation is $(\psi_*, \frac{\epsilon}{2\pi})$-large and its mark $\markd$ verifies $x<\frac{1}{2}$, then its corresponding face is $\epsilon$-large. 
As before, let $\bar{N}^{\psi_*}(\epsilon)$ be the number of $(\psi_*, \epsilon)$-large dislocations of fragments of masses greater than $\frac{1}{2}$.
Then 
\begin{equation}\label{eq:sl1}
\Nld^{\psi_*}(\frac{\epsilon}{2\pi})- \bar{N}^{\psi_*}(\frac{\epsilon}{2\pi}) \leq \Nld(\epsilon).
\end{equation}
On the other hand, if a dislocation with mark $\markd$ is not $(\psi_*, \frac{\epsilon}{2\pi})$-large, then $xs_1<\frac{\epsilon}{2\pi}$ or $x(1- s_1) < \frac{\epsilon}{2\pi}$. If $xs_1<\frac{\epsilon}{2\pi}$, then, as $2\pi xs_i\leq 2\pi xs_1 < \epsilon$ for $i\geq 2$, at most one arc is longer than $\epsilon$, thus the face is not $\epsilon$-large; if $x(1- s_1) < \frac{\epsilon}{2\pi}$, noticing that $2\pi x(1- s_1)$ is the total length of all the arcs except the two with lengths $\min(2\pi xs_1, 2\pi -2\pi xs_1)$ and $\min(2\pi x, 2\pi -2\pi x)$, we find the face not $\epsilon$-large. Hence 
\begin{equation}\label{eq:sl2}
\Nld(\epsilon) \leq \Nld^{\psi_*}(\frac{\epsilon}{2\pi}). 
\end{equation}

Next we study $\Nld^{\psi_*}(\epsilon)$ and $\bar{N}^{\psi_*}(\epsilon)$. By Section 4.4 in \cite{haasmiermont04genealogy}, $\exists C>0$ such that
\begin{equation}\nu_{\beta}(\s \in \Sd:\min(s_1, 1-s_1)>u)\leq  \nu_{\beta}(\s \in \Sd: 1-s_1>u)\sim C u^{-(1-1/\beta)},\quad  u\to 0^+.\end{equation}
Since $\psi_*$ has the form \eqref{eq:psi} and $(H1)$ holds as $1-1/\beta<1$, by Theorem \ref{thm:large} we have
\begin{equation}\label{eq:sl3}
 \lime \epsilon N^{\psi_*}(\frac{\epsilon}{2\pi}) = 2\pi\frac{1}{m}\int_0^{1}\nu_{\beta} \left(\s \in \Sd:\min (s_1, 1-s_1)> u\right) du \cvL,
\end{equation} 
where $m = \int_{\Sd} \sum_{i=1}^{\infty} s_i \log (s_i^{-1}) \nu_{\beta}(ds) $. 
Using Lemma \ref{lem:small}, we get
\begin{equation}\label{eq:sl4}
 \lime \epsilon \bar{N}^{\psi_*}(\epsilon)= 0 \cvL.
\end{equation} 
Then, combining \eqref{eq:sl1}, \eqref{eq:sl2}, \eqref{eq:sl3} and \eqref{eq:sl4}, we have 
\begin{equation}
\lime \epsilon \Nld(\epsilon)  = 2\pi\frac{1}{m}\int_0^{1}\nu_{\beta}\left(\s \in \Sd:\min (s_1, 1-s_1)> u\right) du \cvL.
\end{equation} 
To compute the value of limit, let us introduce the size biased picked jump $\Delta^*$ among $(\Delta_i)_{i\in \N}$, the jumps of a $\beta$-stable subordinator $T$ before time $1$. The law of $\Delta^*$ conditionally on $(\Delta_i)_{i\in \N}$ is given by \begin{equation}
\Probcond{\Delta^* = \Delta_j}{(\Delta_i)_{i\in \N}  } = \frac{\Delta_j}{T_1}, \quad \forall j\in \N.
\end{equation} 
Then, on the one hand, we deduce from \eqref{eq:nubeta} that
\begin{linenomath}\begin{align}
m =  \int_{\Sd} \sum_{i=1}^{\infty} s_i \log (s_i^{-1}) \nu_{\beta}(d\s) = D_{\beta} \Exp{T_1 \sum_{i=1}^{\infty} \frac{\Delta_i}{T_1} \log (\frac{T_1}{\Delta_i})}=\Exp{T_1 \log (\frac{T_1}{\Delta^*})}.
\end{align}\end{linenomath}
According to Lemma 1 in \cite{miermont2003self1}, the joint law of $(\Delta^*, T_1)$ has density:
\begin{equation}
\frac{C_{\beta} q_1(s-y)}{s y^{1/\beta}}dyds , \quad (y,s)\in [0,\infty)^2,
\end{equation} 
where the constant $C_{\beta} = \frac{1}{\beta \Gamma(1-1/\beta)}$, and $q_1$ is the density function of $T_1$. Then 
\begin{linenomath}\begin{align}
m &= D_{\beta} C_{\beta} \int_{[0,\infty)^2} \log(s/y) q_1(s-y)y^{-\frac{1}{\beta}} dy ds \\ 
& =  D_{\beta} C_{\beta} \int_0^{\infty} q_1(u)du \int_u^{\infty} \log(\frac{s}{s-u})(s-u)^{-\frac{1}{\beta}} ds \\
& = D_{\beta} C_{\beta} \frac{\beta}{\beta-1} \frac{\pi}{\sin(\pi/\beta)} \Exp{T_1^{1-1/\beta}} .
\end{align}\end{linenomath}
It is well-known that $\Exp{T_1^{1-1/\beta}} = \frac{\Gamma(2-\beta)}{\Gamma(1/\beta)}$ and $\Gamma(1- 1/\beta)\Gamma(1/\beta) = \frac{\pi}{\sin(\pi /\beta)}$, so we find that 
$$m = D_{\beta} \frac{\Gamma(2-\beta)}{\beta-1}.$$
On the other hand, by \eqref{eq:nubeta} and Fubini's Theorem, we have
\begin{equation}
\int_0^{1}\nu_{\beta}(\s \in \Sd:\min (s_1, 1-s_1)> u) du =  D_{\beta}\Exp{T_1 \int_0^{1}\ind{\min (\frac{T_1-\Delta_1}{T_1}, \frac{\Delta_1}{T_1})> u}du }
 = D_{\beta}\Exp{\min(T_1-\Delta_1, \Delta_1)}.
\end{equation}
Then the claim follows. We note that $\Exp{\min(T_1-\Delta_1,\Delta_1 )}$ is indeed finite, since 
\begin{equation}
\Exp{\min(T_1-\Delta_1,\Delta_1 )} \leq \Exp{T_1 -\Delta_1}\leq \Exp{T_1 -\Delta^*}
\end{equation}  
and we can check by using the joint law of $(\Delta^*, T_1)$ that $\Exp{T_1 -\Delta^*}< \infty$.  
\end{proof}

\subsection{Length of the longest chord}
For $\beta\in (1,2)$, we will find the law of the length of the longest chord in the $\beta$-stable lamination in the same way as in the Brownian triangulation. Let $2\pi L$ be the length of the minor arc corresponding to the longest chord in the $\beta$-stable lamination. In the $\beta$-stable lamination, almost surely no chord passes through the origin. Thus we still call the unique face that contains the origin the {\it centroid}, and the longest chord is still an edge of the centroid. Hence using the bijection obtained by Proposition \ref{prop_bilam} and the same arguments as in the proof of Lemma \ref{lem:longest}, we can prove that 
\begin{equation}
\Prob{L < a} = \Exp{\Nld(1-a)}, \quad 0<a<\frac{1}{2},
\end{equation}
where $N(1-a)$ is the number of $(\psi_L, 1-a)$-large dislocations in $\Theta_{\beta}$ as in Lemma \ref{lem:longest}.
This equation allows us to find the law of the length of the longest chord in the $\beta$-stable lamination. 

\begin{prop}\label{prop:arcsl}
For $\beta\in (1,2)$, let $2\pi L$ be the length of the minor arc associated with the longest chord in the $\beta$-stable lamination, then $L$ has distribution function  
\begin{linenomath}\begin{align}
\Prob{L < a}& =D_{\beta}C_{\beta} \Exp{T_1 \int_{\frac{\Delta_1}{aT_1} \vee 1}^{\frac{1}{1-a}} (1-x^{-1})^{-1/\beta} dx } , \quad 0<a< \frac{1}{2}, 
 \end{align}\end{linenomath}
where $D_{\beta}= \frac{\beta^2\Gamma(2-\beta^{-1})}{\Gamma(2-\beta)}$, $C_{\beta}=  \frac{1}{\beta \Gamma(1-\beta^{-1})}$. 
\end{prop}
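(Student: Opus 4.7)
My plan is to adapt the Brownian-triangulation computation (which recovered Aldous' formula (9)) to the $\beta$-stable setting. From the discussion preceding the proposition we already have $\Prob{L < a} = \Exp{N(1-a)}$, where $N(1-a)$ counts the $(\psi_L, 1-a)$-large dislocations of $\Theta_\beta$ and $\psi_L(x,\s) = \min(x, 1-xs_1)$. First I would invoke the transformation of Section \ref{section:index}, which preserves the number of large dislocations, to replace $\Theta_\beta$ by its homogeneous counterpart $\X$, and then apply Lemma \ref{lem:Mart} to rewrite $\Exp{N(1-a)} = \Exp{A(1-a)}$.

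Next, the tagged-fragment identity \eqref{eq:tagged} of Lemma \ref{lem:xi} yields
$$\Exp{A(1-a)} = \int_0^\infty e^x f_{1-a}(e^{-x})\, dU(x),$$
where $f_{1-a}(y) = \nu_\beta(\s : \psi_L(y,\s) > 1-a)$ and $dU$ is the potential measure of the tagged-fragment subordinator. A direct inspection shows that $\psi_L(y,\s) > 1-a$ if and only if $y > 1-a$ and $s_1 < a/y$; using \eqref{eq:nubeta} this gives
$$f_{1-a}(y) = \ind{y > 1-a}\, D_\beta \Exp{T_1 \ind{\Delta_1 < aT_1/y}}.$$

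The key step is to compute $dU$ explicitly in the $\beta$-stable case. Using \eqref{eq:phi} and \eqref{eq:nubeta} together with the joint law of $(T_1, \Delta^*)$ employed in the proof of Theorem \ref{thm:sl} (where $\Delta^*$ is the size-biased picked jump) and standard identities for stable subordinators, one can compute the Laplace exponent $\Phi(p)$ of the tagged-fragment subordinator. Laplace inversion, aided by the Beta-integral $\int_0^\infty e^{-px}(1-e^{-x})^{-1/\beta}\,dx = \Gamma(p)\Gamma(1-1/\beta)/\Gamma(p+1-1/\beta)$, should then yield $dU(x) = C_\beta(1-e^{-x})^{-1/\beta}\,dx$. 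Plugging this back, changing variable $y = e^x$ and applying Fubini to swap the $(T_1, \Delta_1)$-expectation with the $dU$-integral, the three conditions $y > 1$, $y < 1/(1-a)$ and $y > \Delta_1/(aT_1)$ combine into the range $(\Delta_1/(aT_1))\vee 1 < y < 1/(1-a)$, producing the claimed formula. I expect the main technical obstacle to be this Laplace inversion, since identifying $\Phi$ requires careful manipulation of moments involving $T_1^{-p}\sum_i \Delta_i^{p+1}$.
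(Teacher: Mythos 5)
Your proposal follows essentially the same route as the paper: reduce to $\Exp{\Nld(1-a)}$ via the homogeneous counterpart and Lemma \ref{lem:Mart}, apply the tagged-fragment identity \eqref{eq:tagged} with $f_{1-a}$ expressed through \eqref{eq:nubeta}, identify $dU(x)=C_{\beta}(1-\e^{-x})^{-1/\beta}dx$ from the Laplace exponent via the Beta integral, and finish by Fubini and the change of variable $y=\e^{x}$. The only difference is at the step you flag as the main obstacle: the paper does not rederive $\Phi(p)=\beta\,\Gamma(p+1-1/\beta)/\Gamma(p)$ but simply cites Equation (10) of \cite{miermont2003self1}, so your planned computation via the joint law of $(T_1,\Delta^*)$ is feasible but unnecessary.
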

\begin{proof}[Proof of Proposition \ref{prop:arcsl}]
For $a\in (0, \frac{1}{2})$, as we have argued above, $\Prob{L \leq a} = \Exp{\Nld(1-a)}$. Let us calculate $\Exp{\Nld(1-a)}$. Using the transformation in Section \ref{section:index}, we may regard $\Nld(1-a)$ as the number of $(\psi_L, 1-a)$-large dislocations in $\X$, the homogeneous counterpart of $\Theta_{\beta}$. Recall that $\psi_L(x, \s) = \min(x, 1-xs_1)$, $\markd$, it follows from Lemma \ref{lem:Mart} that
\begin{equation} \Exp{\Nld(1-a)} =  \Exp{\int_0^{\infty}\sum_{i=1}^{\infty} \nu_{\beta} \left(\s\in \Sd: 1-s_1> 1-\frac{a}{X_i(r)}\right) \ind{X_i(r)>1-a} dr}.\end{equation}  
Using \eqref{eq:tagged}, \eqref{eq:nubeta} and Fubini's Theorem yields
\begin{equation}\label{eq:la1}
\Exp{\Nld(1-a)} = \int_0^{\log{\frac{1}{1-a}}} \e^{x} \int_{\Sd} \ind{1-s_1 > 1-a \e^{x}} \nu_{\beta}(d\s)dU(x) = D_{\beta} \Exp{T_1 \int_{(\log\frac{\Delta_1}{aT_1}) \vee 0}^{\log{\frac{1}{1-a}}}\e^{x}dU(x) },
\end{equation}
where $U$ is the potential measure of the subordinator associated with $\X$ as in Lemma \ref{lem:xi}. From Lemma \ref{lem:xi} and Equation (10) in \cite{miermont2003self1}, the Laplace transform of $U$ is 
\begin{equation}\int_{0}^{\infty}e^{-p x}dU(x)  =\left( \int_{\Sd}\left(1- \sum^{\infty}_{i=1} s_i^{p+1}\right) \nu_{\beta}(d\s) \right)^{-1}=\left( \beta \frac{\Gamma(p + 1 - 1/\beta)}{\Gamma(p)} \right)^{-1}, \quad p> 0.
\end{equation} 
Noticing that
\begin{equation}
 \int_0^{+\infty} \e^{-p x} (1-\e^{-x})^{-1/\beta} dx = \Gamma(1-1/\beta)\frac{\Gamma(p) }{\Gamma(p +1 -1/\beta) }, \quad p> 0,
\end{equation}
we find the density of $dU$:
\begin{equation}
dU(x)= C_{\beta} (1-\e^{-x})^{-1/\beta}dx, \quad x\geq 0, 
\end{equation}
where $ C_{\beta} = (\beta \Gamma(1-1/\beta))^{-1}$. Rewriting \eqref{eq:la1}, we have
\begin{linenomath}\begin{align}
\Prob{L < a} = \Exp{\Nld(1-a)}& = D_{\beta}C_{\beta} \Exp{T_1 \int_{\frac{\Delta_1}{aT_1} \vee 1}^{\frac{1}{1-a}} \left(1-x^{-1}\right)^{-1/\beta} dx } .
\end{align}\end{linenomath}
\end{proof}


\section{Proofs of the technical statements} \label{section:technical}
In the section we prove Lemma \ref{lem:m2} and  Lemma \ref{lem:small}.
\begin{proof}[Proof of Lemma \ref{lem:m2}]
Given $V$, a uniform random variable in $(0,1)$ independent of $\X$, we obtain the fragment tagged by $V$ in the same way as in Section \ref{section:tagged}, and denote its mass by $\chi$. Set
\begin{equation} \tilde{A}_{t}(\epsilon) =\int_0^{t} \chi(s)^{-1} f_1(\epsilon^{-\frac{1}{b}}\chi(s)) ds, \quad t\geq 0. \end{equation} 
Note that $f_1(0)=0$ and by convention $0^{-1}\times 0=0$. Let $\tilde{A}(\epsilon):= \lim_{t\to \infty}\tilde{A}_{t}(\epsilon)$. Given another random variable $V'$, uniform in $(0,1)$ and independent of $V$ and $\X$, we define $\chi'$, $\tilde{A}'_{t}(\epsilon)$ and $\tilde{A}'(\epsilon)$ in the same way. Using \eqref{eq:tagged} yields 
\begin{equation}\Exp{\tilde{A}_t(\epsilon)} = \Exp{\tilde{A}'_t(\epsilon)} = \Exp{\int_0^t \sum_{i=1}^{\infty} f_1(\epsilon^{-\frac{1}{b}} X_i(r))dr}.\end{equation}
Since $(V,V')$ has uniform distribution in $[0,1]^2$, independent of $\F_t$, we deduce the distribution of $(\chi(t),\chi'(t))$ conditionally on $\F_t$: 
\begin{equation}\Probcond{(\chi(t),\chi'(t)) = (X_i(t),X_j(t))}{\F_t} = X_i(t)X_j(t), \quad \forall i,j\in \N.\end{equation}
By standard calculation, there is 
\begin{equation}\Exp{\tilde{A}_t(\epsilon)\tilde{A}_t'(\epsilon)} =\Exp{\left(\int_0^t \sum_{i=1}^{\infty} f_1(\epsilon^{-\frac{1}{b}} X_i(r))dr \right)^2}.\end{equation}
Letting $t \to +\infty$ and using the monotone convergence theorem, we obtain 
\begin{equation}\Exp{\tilde{A}(\epsilon)}=\Exp{\Ald(\epsilon)} \quad \text{and}\quad \Exp{\tilde{A}(\epsilon)\tilde{A}'(\epsilon)}=\Exp{\Ald(\epsilon)^2}.\end{equation}
Let $T$ be the first instant when $V$ and $V'$ belong to two different intervals, with respective lengths $\chi(T)$ and $\chi'(T)$. Set 
\begin{equation}S:= \tilde{A}_T(\epsilon), \quad R:= \int_T^{\infty} \chi(t)^{-1}f_1(\epsilon^{-\frac{1}{b}}\chi(t)) dt, \quad R':= \int_T^{\infty} \chi'(t)^{-1}f_1(\epsilon^{-\frac{1}{b}}\chi'(t)) dt .\end{equation}
For conciseness we did not indicate their dependence on $\epsilon$. 
Then $\tilde{A}(\epsilon)=  S+R$ and $\tilde{A}'(\epsilon)= S+R'$. Hence 
\begin{equation}\Exp{\Ald(\epsilon)^2} = \Exp{S^2 + S(R +R')+RR'}.\end{equation} 
We will calculate each term.
Let us begin with $\Exp{S(R+R')}$. 
By Markov property, 
\begin{equation}
 \Expcond{S(R+ R')}{S,\chi(T),\chi'(T) } 
=S \left( \chi(T)^{-1}\left. \Exp{\Ald(x^{-b}\epsilon)}\right|_{x=\chi(T) } + \chi'(T)^{-1}\left. \Exp{\Ald((x')^{-b}\epsilon)}\right|_{x' =\chi'(T) } \right).
\end{equation}
As $\Ald(\epsilon)= 0$ for $\epsilon> |\varphi|$, from Lemma \ref{lem:m1} it is clear that there exists $\bar{c}_A>0$ such that for every $\epsilon>0$,
\begin{equation}\label{eq:m1bar}
\epsilon^{\frac{1}{b}} \Exp{\Ald(\epsilon)}\leq \bar{c}_A.
\end{equation}
Thus
\begin{equation}\label{eq:SBB'}
  \epsilon^{\frac{2}{b}} \Exp{S(R+R')}\leq 2\bar{c}_A  \epsilon^{\frac{1}{b}} \Exp{S}.
\end{equation}

We now deal with $\Exp{RR'}$. Let $\bar\chi$ and $\bar{\chi}'$ be two independent copies of $\chi$. By the branching property and the Markov property, we have that
\begin{linenomath}\begin{align}
& \Expcond{RR' }{\chi(T), \chi'(T)}\\
& = \left. \Exp{\int_0^{\infty} (x \bar{\chi}(s))^{-1}f_1(x\epsilon^{-\frac{1}{b}}\bar{\chi}(s))ds \int_0^{\infty} (x'\bar{\chi}'(r))^{-1}f_1(x'\epsilon^{-\frac{1}{b}}\bar{\chi}'(r))dr} \right|_{x =\chi(T), x'=\chi'(T)}\\
& =  \chi(T)^{-1}\chi'(T)^{-1} \left.\Exp{\Ald(x^{-b}\epsilon)}\right|_{x =\chi(T) } \left. \Exp{\Ald((x')^{-b}\epsilon)}\right|_{x' =\chi'(T) }.
\end{align}\end{linenomath}
It then follows from \eqref{eq:m1bar} that for every $\epsilon>0$,
\begin{equation}\epsilon^{\frac{2}{b}} \Expcond{RR' }{\chi(T), \chi'(T)} \leq (\bar{c}_A)^2. \end{equation}
We deduce from Lemma \ref{lem:m1} that 
\begin{equation}\lim_{\epsilon \to 0} \epsilon^{\frac{2}{b}}\Expcond{RR' }{\chi(T), \chi'(T)} = \left(\frac{1}{m} \int_0^{\infty} g(u^b)du \right)^2. \end{equation}
Taking expectation in the last limit and using the dominated convergence theorem, we have that
\begin{equation}\label{eq:BB'}
\lim_{\epsilon\to 0} \epsilon^{\frac{2}{b}}\Exp{RR'} = \left(\frac{1}{m} \int_0^{\infty} g(u^b)du \right)^2.
\end{equation}

We next calculate $\Exp{S^2}$. It is clear that
\begin{linenomath}\begin{align}
  \Exp{S^2} &=  2 \int_{(0,\infty)^2} \Exp{ \ind{s<t} \ind{s<T} \ind{t<T} \chi(s)^{-1} \chi(t)^{-1}f_1(\epsilon^{-\frac{1}{b}}\chi(s))f_1(\epsilon^{-\frac{1}{b}}\chi(t)) } ds dt\\
 &\leq  2 \int_{(0,\infty)^2} \Exp{ \ind{s<t} \ind{s<T} \chi(s)^{-1} \chi(t)^{-1}f_1(\epsilon^{-\frac{1}{b}}\chi(s))f_1(\epsilon^{-\frac{1}{b}}\chi(t)) } ds dt. 
\end{align}\end{linenomath}
Let $\bar{\chi}$ be an independent copy of $\chi$. By the Markov property, the last quantity equals
\begin{equation} 2 \int_{0}^{\infty} ds \Exp{ \ind{s<T} \chi(s)^{-1} f_1(\epsilon^{-\frac{1}{b}}\chi(s)) \left. \Exp{\int_{0}^{\infty} x^{-1}\bar{\chi}(u)^{-1}f_1(\epsilon^{-\frac{1}{b}} x \bar{\chi}(u)) du } \right|_{x= \chi(s)}} .\end{equation}
Multiplying by $\epsilon^{\frac{2}{b}}$ and using \eqref{eq:m1bar}, we obtain that
\begin{equation}\label{eq:S2}
 \epsilon^{\frac{2}{b}} \Exp{S^2} \leq 2 \epsilon^{\frac{1}{b}} \int_{0}^{\infty} ds \Exp{ \ind{s<T} \chi(s)^{-1}f_1(\epsilon^{-\frac{1}{b}}\chi(s)) \bar{c}_A } = 2\bar{c}_A  \epsilon^{\frac{1}{b}}\Exp{S}.
\end{equation}

We finally look at $\Exp{S}$. For $t> 0$, write $I_{n(t)}(t)$ for the interval fragment containing $V$ at time $t$ as in Section \ref{section:tagged}, thus $|I_{n(t)}(t)|=\chi(t)$. Since $V'$ is independent of $I_{n(t)}(t)$, 
 \begin{equation}
\Expcond{\chi(t)^{-1}f_1(\epsilon^{-\frac{1}{b}}\chi(t))\ind{t\leq T}}{I_{n(t)}(t)} = \chi(t)^{-1}f_1(\epsilon^{-\frac{1}{b}}\chi(t)) \Expcond{\ind{V'\in I_{n(t)}(t)}}{I_{n(t)}(t)}  = f_1(\epsilon^{-\frac{1}{b}}\chi(t)).\end{equation}
Therefore, we have
\begin{linenomath}\begin{align}&\Exp{S} =\int_0^{\infty} \Exp{ \Expcond{\chi(t)^{-1}f_1(\epsilon^{-\frac{1}{b}}\chi(t))\ind{t\leq T}}{I_{n(t)}(t)} }dt = \int_0^{\infty}\Exp{f_1(\epsilon^{-\frac{1}{b}}\chi(t))} dt.  \end{align}\end{linenomath} 
Multiplying by $\epsilon^{\frac{1}{b}}$ and using \eqref{eq:cbarf} and then Lemma \ref{lem:xi}, we have that 
\begin{linenomath}\begin{align}
\epsilon^{\frac{1}{b}}\int_0^{\infty}\Exp{f_1(\epsilon^{-\frac{1}{b}}\chi(t))} dt  \leq \epsilon^{\frac{1-ab}{b}} \bar{c} \int_0^{\infty} \Exp{\e^{-ab\xi(t)}} dt  = \bar{c} \epsilon^{\frac{1-ab}{b}} \frac{1}{\Phi(ab)}.
 \end{align}\end{linenomath} 
Since $1>ab$, the right-hand side tends to $0$ as $\epsilon \to 0$. Hence
\begin{equation}\label{eq:S}
\lim_{\epsilon \to 0} \epsilon^{\frac{1}{b}} \Exp{S}=\lim_{\epsilon \to 0} \epsilon^{\frac{1}{b}} \Exp{\int_0^{\infty}f_1(\epsilon^{-\frac{1}{b}}\chi(t))dt} = 0.
\end{equation}
Combining \eqref{eq:SBB'},\eqref{eq:BB'}, \eqref{eq:S2} and \eqref{eq:S}, we prove this lemma.
\end{proof}


\begin{proof}[Proof of Lemma \ref{lem:small}]
Define the function $\bar{\psi}$ by $\bar{\psi}(x, \s):= \psi(x,\s)\ind{x>\frac{1}{2}}$, $(x,\s)\in [0,1]\times \Sd$. Then $\bar{N}(\epsilon)$ is the number of $(\bar{\psi}, \epsilon)$-large dislocations, and the random variable defined as in \eqref{eq:Ainf} is
\begin{equation} \bar{A}(\epsilon) :=  \int_0^{\infty} \sum_{i=1}^{\infty} f_1 (X_i(r)\epsilon^{-\frac{1}{b}}) \ind{X_i(r)>\frac{1}{2}} dr.\end{equation}
Applying \eqref{eq:tagged}, we have
\begin{linenomath}\begin{align}
 \Exp{\bar{A}(\epsilon)} =  \int_0^{\infty} \Exp{\chi(r)^{-1} f_1 (\chi(r)\epsilon^{-\frac{1}{b}}) \ind{\chi(r)>\frac{1}{2}}} dr  \leq  2 \Exp{\int_0^{\infty} f_1(\chi(r)\epsilon^{-\frac{1}{b}}) dr } .
\end{align}\end{linenomath}
Multiplying by $\epsilon^{\frac{1}{b}}$ and using \eqref{eq:S}, we have
\begin{equation}\label{eq:barA}
\lim_{\epsilon \to 0}\epsilon^{\frac{1}{b}} \Exp{\bar{A}(\epsilon)}=0.
\end{equation}
Next we study $\Exp{\bar{A}(\epsilon)^2}$. Using the same notations as in the proof of Lemma \ref{lem:m2}, we set
\begin{linenomath}\begin{align}\bar{S}& := \int_0^{T} \chi(t)^{-1}f_1(\epsilon^{-\frac{1}{b}}\chi(t))\ind{\chi(t)>\frac{1}{2}} dt,\\
\bar{R}& := \int_T^{\infty} \chi(t)^{-1}f_1(\epsilon^{-\frac{1}{b}}\chi(t))\ind{\chi(t)>\frac{1}{2}} dt,\\
\bar{R}'&:= \int_T^{\infty} \chi'(t)^{-1}f_1(\epsilon^{-\frac{1}{b}}\chi'(t))\ind{\chi'(t)>\frac{1}{2}} dt.\end{align}\end{linenomath}
Thus similarly there is
\begin{equation}\Exp{\bar{A}(\epsilon)^2} = \Exp{\bar{S}^2 + \bar{S}(\bar{R} +\bar{R}')+\bar{R}\bar{R}'}.\end{equation}
On the one hand, letting $\bar\chi$ and $\bar{\chi}'$ be two independent copies of $\chi$, we see from the branching property and the Markov property that
\begin{linenomath}\begin{align}
& \Expcond{\bar{R}\bar{R}' }{\chi(T), \chi'(T)}\\
& = \left. \Exp{\int_0^{\infty} (x \bar{\chi}(s))^{-1}\ind{x \bar{\chi}(s)>\frac{1}{2}} f_1(x\epsilon^{-\frac{1}{b}}\bar{\chi}(s))ds \int_0^{\infty} (x'\bar{\chi}'(r))^{-1}\ind{x' \bar{\chi}'(r)>\frac{1}{2}}f_1(x'\epsilon^{-\frac{1}{b}}\bar{\chi}'(r))dr} \right|_{x =\chi(T), x'=\chi'(T)}\\
& \leq \left. \Exp{\int_0^{\infty} (x \bar{\chi}(s))^{-1}\ind{ \bar{\chi}(s)>\frac{1}{2}} f_1(x\epsilon^{-\frac{1}{b}}\bar{\chi}(s))ds \int_0^{\infty} (x'\bar{\chi}'(r))^{-1}\ind{ \bar{\chi}'(r)>\frac{1}{2}}f_1(x'\epsilon^{-\frac{1}{b}}\bar{\chi}'(r))dr} \right|_{x =\chi(T), x'=\chi'(T)}\\
& =  \chi(T)^{-1}\chi'(T)^{-1} \left.\Exp{\bar{A}(x^{-b}\epsilon)}\right|_{x =\chi(T) } \left. \Exp{\bar{A}((x')^{-b}\epsilon)}\right|_{x' =\chi'(T) }.
\end{align}\end{linenomath}
Then it follows from \eqref{eq:barA} that
\begin{equation}\lim_{\epsilon\to 0} \epsilon^{\frac{2}{b}}\Expcond{\bar{R}\bar{R}' }{\chi(T), \chi'(T)} =0. \end{equation}
Taking expectation and using the dominated convergence theorem, we see that 
\begin{equation}\label{eq:bar1}
\lim_{\epsilon\to 0} \epsilon^{\frac{2}{b}}\Exp{\bar{R}\bar{R}'} =0.
\end{equation}
On the other hand, we observe from \eqref{eq:SBB'} and \eqref{eq:S2} that
\begin{equation} \epsilon^{\frac{2}{b}} \Exp{\bar{S}^2 + \bar{S}(\bar{R} +\bar{R}')}\leq \epsilon^{\frac{2}{b}}  \Exp{S^2 + S(R +R')} \leq 4\bar{c}_A  \epsilon^{\frac{1}{b}} \Exp{S},\end{equation}
then it follows from \eqref{eq:S} that
\begin{equation}\label{eq:bar2}
\lim_{\epsilon\to 0}\epsilon^{\frac{2}{b}} \Exp{\bar{S}^2 + \bar{S}(\bar{R} +\bar{R}')} =0.
\end{equation}
So we conclude 
\begin{equation}\lim_{\epsilon \to 0}\epsilon^{\frac{2}{b}} \Exp{\bar{A}(\epsilon)^2}=\lim_{\epsilon\to 0}\epsilon^{\frac{2}{b}} \Exp{\bar{S}^2 + \bar{S}(\bar{R} +\bar{R}') + \bar{R}\bar{R}'} = 0.\end{equation}
The claim thus follows from Corollary \ref{cor_NA}. 
\end{proof}


\bibliographystyle{amsplain}
\bibliography{Quan_Shi.bib}

\end{document}